\let\underbrace\LaTeXunderbrace
\DeclareMathOperator*{\argmax}{arg\,max}
\DeclareMathOperator*{\argmin}{arg\,min}
\theoremstyle{definition}
\newtheorem{exmp}{Example}[section]
\newcommand{\myparagraph}[1]{\paragraph{#1}}
\newcommand{\suppl}{1}
\newif\ifmoditem
\newcommand{\setupmodenumerate}{\global\moditemfalse
  \let\origmakelabel\makelabel
  \def\moditem##1{\global\moditemtrue\def\mesymbol{##1}\item}\def\makelabel##1{\origmakelabel{##1\ifmoditem\rlap{\mesymbol}\fi\enspace}\global\moditemfalse}}
\let\originalparagraph\paragraph
\renewcommand{\paragraph}[2][]{\originalparagraph{#2#1}}
\crefname{exmp}{Example}{Examples}
\Crefname{exmp1}{Example}{Examples}
	\let\cref\crtcref
\definecolor{LMUblue}{RGB}{0, 17, 88}
\definecolor{LMUlightblue}{RGB}{92,177,235}
\definecolor{LMUgreen}{RGB}{0,136,58}
\definecolor{LMUlightgreen}{RGB}{170, 173, 0}
\definecolor{LMUred}{RGB}{190,25,8}
\definecolor{LMUpurple}{RGB}{176, 32, 121}
\definecolor{LMUorange}{RGB}{241, 135, 0}
\definecolor{bggray}{RGB}{245, 245, 245}
\definecolor{LMUgray}{RGB}{0.2,0.2,0.2}
\newtheoremstyle{new}
  {12pt}      {12pt}      {\itshape}  {}          {\bfseries\color{black}} {.}         { }         {}          \theoremstyle{new}
\newtheorem{theorem}{Theorem}
\newtheorem{corollary}{Corollary}
\newtheorem{proposition}{Proposition}
\newtheorem{lemma}{Lemma} 
\newtheorem{example}{Example}
\Crefname{theorem}{Theorem}{Theorems}
\Crefname{corollary}{Corollary}{Corollaries}
\Crefname{proposition}{Proposition}{Propositions}
\Crefname{lemma}{Lemma}{Lemmas}
\Crefname{definition}{Definition}{Definitions}
\Crefname{example}{Example}{Examples}
\Crefname{remark}{Remark}{Remarks}
\Crefname{claim}{Claim}{Claims}
\definecolor{shadethmcolor}{cmyk}{0,0,0,0.075}    
\definecolor{shaderulecolor}{rgb}{1,1,1}
\newtheoremstyle{shad}
  {12pt}      {12pt}      {\itshape }  {}          {\bfseries\color{black}} {.}         { }         {}          \theoremstyle{shad} 
\newtheoremstyle{shad*}
  {12pt}      {12pt}      {\itshape }  {}          {\bfseries\color{black}} {.}         { }         {}          \theoremstyle{shad*} 
\Crefname{theorem}{Theorem}{Theoreme}
\Crefname{corbox}{Korollar}{Korollare}
\Crefname{propbox}{Proposition}{Propositionen}
\Crefname{lembox}{Lemma}{Lemmas}
\Crefname{exbox}{Beispiel}{Beispiele}
\Crefname{defbox}{Definition}{Definitionen}
\Crefname{rembox}{Anmerkung}{Anmerkungen}
\newcommand{\ind}{\mathds{1}}
\newcommand{\R}{\mathbb{R}}
\newcommand{\N}{\mathbb{N}}
\providecommand{\P}{}
\renewcommand{\P}{\mathbb{P}}
\newcommand{\Z}{\mathbb{Z}}
\newcommand{\E}{\mathbb{E}}
\newcommand{\bnull}{\bm{0}}
\newcommand{\ba}{\bm{a}}
\newcommand{\bb}{\bm{b}}
\newcommand{\be}{\bm{e}}
\newcommand{\bk}{\bm{k}}
\newcommand{\bu}{\bm{u}}
\newcommand{\bv}{\bm{v}}
\newcommand{\bw}{\bm{w}}
\newcommand{\bx}{\bm{x}}
\newcommand{\by}{\bm{y}}
\newcommand{\bz}{\bm{z}}
\newcommand{\bC}{\bm{C}}
\newcommand{\bW}{\bm{W}}
\newcommand{\bX}{\bm{X}}
\newcommand{\bY}{\bm{Y}}
\newcommand{\bZ}{\bm{Z}}
\newcommand{\Acal}{\mathcal{A}}
\newcommand{\Bcal}{\mathcal{B}}
\newcommand{\Fcal}{\mathcal{F}}
\newcommand{\Ncal}{\mathcal{N}}
\newcommand{\Xcal}{\mathcal{X}}
\newcommand{\eps}{\varepsilon}
\newcommand{\bbeta}{\bm{\beta}}
\newcommand{\bDelta}{\bm{\Delta}}
\newcommand{\btheta}{\bm{\theta}}
\newcommand{\blambda}{\bm{\lambda}}
\newcommand{\bmu}{\bm{\mu}}
\newcommand{\0}{\bm{0}}
\newcommand{\tr}{\operatorname{tr}}
\newcommand{\diag}{\operatorname{diag}}
\renewcommand{\vec}{\operatorname{vec}}
\newcommand{\sign}{\operatorname{sign}}
\renewcommand{\bar}{\overline}
\newcommand{\hbtheta}{\bm{\hat{\theta}}}
\newcommand{\htheta}{\hat{\theta}}
\newcommand{\hbu}{\bm{\hat{u}}}
\newcommand{\btu}{\bm{\tilde{u}}}
\newcommand{\bttheta}{\bm{\tilde{\theta}}}
\newcommand{\tM}{\tilde{M}}
\newcommand{\tD}{\tilde{D}}
\newcommand{\lf}{\left}
\newcommand{\ri}{\right}
 \providecommand{\Pr}{}
\renewcommand{\Pr}{\mathbb{P}}
\newcommand{\var}{{\mathds{V}\mathrm{ar}}}
\newcommand{\cov}{{\mathds{C}\mathrm{ov}}}
\newcommand{\wt}[1]{\widetilde{#1}}
\newcommand{\sumin}{\sum_{i = 1}^n}
\newcommand{\frzwei}{\frac{1}{2}}
\def\blfootnote{\gdef\@thefnmark{}\@footnotetext}
\newcommand{\expl}[1]{\tag*{(#1)}}
\title{Asymptotics for estimating a diverging number of parameters --- with and without sparsity}
\author{Jana Gauss and Thomas Nagler \\[11pt] \emph{Department of Statistics, LMU Munich} \\ \emph{Munich Center for Machine Learning (MCML)}}
\begin{document}

\def\IncludeProofs{}

\pagenumbering{arabic}

\maketitle
\begin{abstract}We develop a general asymptotic theory for estimating equations whose dimension diverges with the sample size. For both unpenalized and sparse penalized problems, we establish population-level conditions for existence, consistency, uniqueness, and asymptotic normality; in the penalized setting, we also establish selection consistency under a generalized version of the mutual incoherence condition. Our results cover stepwise procedures with a diverging number of steps, independent but non-identically distributed observations, and dependent data. We allow penalties that are simultaneously nonconvex, non-coordinate-separable, and group-structured and may involve heterogeneous tuning parameters. Our population-level conditions imply a weak form of restricted strong convexity, and we provide an explicit example where the commonly used stronger form fails. The results are illustrated by several applications including Group SCAD-penalized estimation in generalized linear models, distributed inference under network dependence, and penalized stepwise estimation in causal inference.
\end{abstract} 

\section{Introduction}\label{sec:Intro}

In modern applications, statisticians are facing increasingly complex and high-dimensional problems. Many data sets have a huge number of variables, calling for similarly many parameters $p$. In other scenarios, the number of variables is moderate, but adequately modeling the data requires highly complex, nonlinear models with many parameters.
The traditional fixed-$p$-large-$n$ paradigm is inadequate in such situations.

This article adopts an asymptotic perspective, allowing both the sample size $n$ and the number of parameters $p_n$ to diverge. We consider general parametric problems where the estimator $\hbtheta$ solves an estimating equation
\begin{align} \label{eq:est_eq}
  \frac 1 n \sumin \phi(\bX_{i}; \hbtheta) = \0 \in \R^{p_n},
\end{align}
with some function $\phi\colon \Xcal \times \R^{p_n} \to \R^{p_n}$ and independent, but not necessarily identically distributed $\bX_i$.
A classical example is risk minimization, where $\phi$ is the gradient of a loss function. However, the estimating equation framework is also suited for more complex methods, such as stepwise and multi-sample estimation procedures, where an optimization-based formulation is less convenient.
In fact, the lack of general asymptotic theory for stepwise procedures when $p_n \to \infty$ is what originally motivated this work.

The main question we address is: under what conditions on the data-generating process, the function $\phi$, and the growth of $p_n$ does an estimator solving \eqref{eq:est_eq} exist, and when is it consistent, unique, and asymptotically normal?
We also consider sparse problems where $p_n$ may exceed $n$. Penalized inference within the general estimating equation framework can be formulated as
\begin{align} \label{eq:est_eq_pen}
  \frac{1}{n} \sumin \phi(\bX_{i}; \hbtheta) \in \partial p_{\blambda}(\hbtheta),
\end{align}
where $\partial p_{\blambda}$ is the generalized gradient of a penalty function $p_{\blambda}$, and $\blambda$ is a vector of tuning parameters.

\myparagraph{Contributions}
The main contribution of this work is a unified asymptotic theory for estimating equations with diverging dimension.
Rather than developing separate arguments for particular models or loss functions, we formulate verifiable population-level conditions on the estimating equation and its curvature.
The gaps in existing theory are most apparent when the estimating equations are not gradients of a single loss, the number of estimation steps diverges, observations come from heterogeneous or dependent populations, or penalties combine nonconvexity with group structure.
The resulting framework unifies several existing results and substantially extends their applicability in three directions.

\emph{First}, for unpenalized estimating equations, we provide general conditions for existence and consistency (\cref{theorem1}), uniqueness (\cref{theorem1-uniqueness}), and asymptotic normality (\cref{theorem2}).
When specialized, these results closely align with the sharpest known conditions for linear and generalized linear models and substantially improve upon those for general maximum likelihood problems.
At the same time, the theory applies directly to estimating procedures that do not arise from risk minimization.

\emph{Second}, we develop consistency (\cref{theorem3}), existence and selection consistency (\cref{theorem4}), uniqueness (\cref{theorem5}), and asymptotic normality (\cref{theorem6}) for penalized estimating equations.
The penalty does not need to be convex or coordinate-separable and may be group-structured or involve multiple tuning parameters with varying strength.
This combination is not covered by the current general theory for sparse $M$-estimation: results allowing nonseparable or group-structured penalties generally require convexity, whereas results allowing nonconvex penalties typically assume coordinate separability.
The solution constructed in \cref{theorem4} relies on a generalization of the mutual incoherence or irrepresentable condition, and suitable nonconvex penalties attain the oracle property.
Our population-level conditions imply a weak version of restricted strong convexity (RSC).
We show that the stronger form commonly used in the literature fails in a simple quadratic problem with non-rank-one sample Hessians, while our weaker condition remains easy to verify (\cref{sec:RSC-failure}).

\emph{Third}, the framework accommodates independent but non-identically distributed observations, and we extend the results to dependent data (\cref{th:dependence}).
This is particularly useful for stepwise and multi-sample procedures, whose estimating equations combine information from different stages or populations.
In the stepwise setting, the number of estimation steps is allowed to diverge with the sample size.

Our illustrative applications cover established results for $M$-estimation and also give results that are not covered by existing work, including stepwise estimation with a diverging number of steps, distributed inference under network dependence,  and the oracle property of Group SCAD-penalized estimation in generalized linear models.

\myparagraph{Relation to existing work}
The study of unpenalized problems dates back at least to \citet{Huber73}, who focused on $M$-estimators in linear models. Following his seminal work, \citet{Yohai, PortnoyCons, PortnoyAsymp, Welsh, Mammen89} established consistency and asymptotic normality under various conditions; see \citet{Li_mEst} for a comprehensive overview.
The sharpest known conditions are $p_n \ln p_n / n \to 0$ for consistency \citep{PortnoyCons}, and $(p_n \ln n)^{3/2}/n \to 0$ \citep{PortnoyAsymp} or $p_n^{3/2} \ln n / n \to 0$ \citep{Mammen89} for asymptotic normality.
\citet{Fan} extend these results from generalized linear models to general maximum likelihood problems under more restrictive conditions, requiring $p_n^4 / n \to 0$ for consistency and $p_n^5 / n \to 0$ for asymptotic normality.
\citet{He2000} derived asymptotics for $M$-estimators with convex loss under a generic stochastic equicontinuity condition, though this approach provides limited insight into the settings where it applies.
Other work studies the exact distribution or predictive risk in linear models when $p_n/n$ has a positive limit \citep{ElKaroui2013, Dobriban2018, Hastie2022}.

When $p_n$ exceeds $n$, consistent estimation is still possible if the true parameter $\btheta^*$ is sparse.
Various sparsity-inducing penalties have been proposed, including the Lasso \citep{TibLasso}, group-structured penalties such as the Group Lasso \citep{GroupL}, and bias-reducing, nonconvex penalties such as $\ell_q$-penalties \citep{Knight}, (Group) SCAD \citep{Fan1997, Fan01, GroupSCAD, GroupSCAD2}, and MCP \citep{Zhang10}.
State-of-the-art results address general $M$-estimation problems and broad classes of penalties \citep{WWpaper, Lee, LohWW, LohWW2, Loh}; see also the monograph by \citet{WWbook}.
These results typically rely on an RSC condition on the realized loss and focus on deterministic bounds.
Frameworks that accommodate nonseparable or group-structured penalties generally assume convexity \citep{WWpaper, Lee}, while results for nonconvex penalties typically focus on coordinate-separable penalties \citep{LohWW, LohWW2, Loh}.
They therefore do not cover combinations such as Group SCAD, nor do their optimization-based formulations directly encompass general stepwise estimating equations.
Our asymptotic perspective is complementary: regularity conditions are formulated at the population level, making their applicability transparent, while distributional limits provide a tight characterization of estimation uncertainty.

\myparagraph{Outline}
The remainder of this article is structured as follows. \cref{sec2} presents the main results for unpenalized estimation. \cref{sec:notation_penalty,sec:penalty_examples} introduce penalized estimating equations and discuss the conditions on the penalty.
\cref{sec:ass_penalty,sec:normality_pen} contain the main results for penalized estimation.
In \cref{sec:dependence}, we extend the theory to dependent data.
In \cref{sec:Exmp}, we illustrate its applicability through $M$-estimation, multi-sample inference, and stepwise procedures in causal inference and stochastic optimization.
All proofs are provided in the supplement.

\section{Unpenalized Estimation}
\label{sec2}
\subsection{Setup and Notation}

Suppose we observe independent random variables $\bX_1, \dots, \bX_n \in \Xcal$
and want to estimate a parameter $\btheta=(\theta_{1}, \ldots, \theta_{p_n})^\top \in \R^{p_n}$ with $p_n \to \infty$ as $n \to \infty$. 
We do not assume identical distributions to also cover multi-sample estimation problems.
The target value $\btheta^*$ is the solution to the system of equations
$\sum_{i = 1}^n\E[\phi_{i}(\btheta^*)]= \0,$ 
with continuous $\phi_{i}(\btheta) 
= \phi(\bX_{i}; \btheta) \in \mathbb{R}^{p_n}$; for example, the gradient of a log-likelihood or  loss function.
The $k$-th entry of $\phi_{i}(\btheta) 
$ is denoted by $\phi_{i}(\btheta)_k, k = 1, \ldots, p_n$.
The estimator $\hbtheta$ is the solution of
\begin{equation}
\label{eq:estim1}
 \Phi_n(\hbtheta) := \frac 1 n \sumin \phi_{i}(\hbtheta)  = \0.
\end{equation}
Define the $p_n \times p_n$ matrices
\begin{align*}
   I(\btheta) = \frac{1}{n} \sum_{i = 1}^n  \mathrm{Cov}[\phi_{i}(\btheta)], \quad J(\btheta) = \frac{1}{n} \sum_{i = 1}^n  \nabla_{\btheta} \E[ \phi_{i}(\btheta)] , \quad 
   \bar J (\btheta^*, \btheta)  = \int_0^1 J(\btheta^* + t(\btheta - \btheta^*)) d t.
\end{align*}
The parameter $\btheta^*$, functions $\phi(\btheta), J(\btheta),  \bar J (\btheta^*, \btheta), I(\btheta)$, and the support and distribution of $\bX_i$ all depend on $n$, but we suppress this in the notation to avoid clutter.
We assume that $J(\btheta)$ is continuous in $\btheta$.

Throughout the paper, $\| \cdot \|$ denotes the Euclidean norm for vectors and the spectral norm  $\| A \| =\sup_{\| \bx \| = 1} \| A\bx\|$ for matrices.
$\lambda_{\max}(A)$ denotes the largest eigenvalue of a matrix $A$ and $\| A \|_\infty$ denotes the largest absolute row sum.
Define $r_n = \sqrt{\mathrm{tr}(I(\btheta^*)) / n}$ and let $\Theta_{n} \subset \R^{p_n}$  be a sequence of sets such that for all $C < \infty$, there exists an $n_C$ such that for all $n \ge n_C$, it holds that $\Theta_n \supset \{ \btheta : \| \btheta - \btheta^* \| \le r_n C \}$.
The matrix $I(\btheta^*)$ measures the variability of the estimating functions, while $J(\btheta^*)$ describes the local sensitivity of the population equation; under the identification conditions below, $r_n$ is the corresponding natural estimation rate.

\subsection{Consistency and Uniqueness}
\label{sec:cons}

Our main assumption for consistency of the estimator $\hbtheta$ is the following:

\begin{enumerate}[label=(A\arabic*), series=assump]
  \item   \label{A:Cons1}  There exists a sequence of symmetric, matrix-valued functions $H_n(\bx)$ such that:
        \begin{enumerate}[label = (\roman*)]
          \item \label{eq:Hn-def} For all  $\bu$ such that $\btheta^* + \bu \in \Theta_n$ and $\bx \in \Xcal$, it holds that \\
                $\bu^\top [\phi(\bx; \btheta^* + \bu) - \phi(\bx; \btheta^* )] \le \bu^\top H_n(\bx) \bu;$
          \item \label{eq:Hn-identifiable}  $\limsup_{n \to \infty} \lambda_{\max}(n^{-1} \sumin\E[H_n(\bX_{i})]) \le -c < 0;$
          \item  \label{eq:Hn-bounds} For some sequence $B_n = o(n / \ln p_n)$, it holds that
                \begin{align*}
                  \sup_{\| \bu \| = 1} \frac{1}{n}  \sumin \E\left[ (\bu^\top H_n(\bX_i) \bu)^2 \right] & = o(n), \\
                     \frac{1}{n} \sum_{i = 1}^n \lf\| \E[H_n(\bX_{i})^2 \ind_{\|H_n(\bX_{i})\| \le B_n}] \ri\| &= o\left(\frac{n}{ \ln p_n}\right) , \\  
                      \sum_{i = 1}^n  \Pr(\| H_n(\bX_{i}) \| > B_n)  &= o(1).
                \end{align*}
        \end{enumerate}
\end{enumerate} Before discussing the assumptions in detail, we state our main results.
\begin{theorem}
  \label{theorem1}
  Under assumption \ref{A:Cons1}, the following holds with probability tending to 1:
  \begin{enumerate}[label=(\roman*)]
    \item The sets $\Theta_n$ contain at least one solution of the estimating equation (\ref{eq:estim1}).
    \item Every solution $\hbtheta \in \Theta_n$ satisfies $\lVert \hbtheta - \btheta^* \rVert = O_p(r_n)$.
  \end{enumerate}
\end{theorem}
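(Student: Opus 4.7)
Set $\bar H_n := n^{-1}\sum_{i=1}^n H_n(\bX_i)$. For any $\bu$ with $\btheta^* + \bu \in \Theta_n$, summing condition \ref{eq:Hn-def} over $i$ and applying Cauchy--Schwarz yields the deterministic bound
\begin{align*}
\bu^\top \Phi_n(\btheta^* + \bu)
&= \bu^\top[\Phi_n(\btheta^* + \bu) - \Phi_n(\btheta^*)] + \bu^\top \Phi_n(\btheta^*) \\
&\le \bu^\top \bar H_n \bu + \|\bu\|\,\|\Phi_n(\btheta^*)\|.
\end{align*}
The entire argument therefore reduces to controlling the two random objects $\bar H_n$ and $\Phi_n(\btheta^*)$, followed by a topological fixed-point step.

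The heart of the proof is the spectral concentration $\lambda_{\max}(\bar H_n) \le -c/2$ with probability tending to $1$. Since \ref{eq:Hn-identifiable} yields $\lambda_{\max}(\E[\bar H_n]) \le -c$, it suffices to show $\|\bar H_n - \E[\bar H_n]\| = o_p(1)$. I would truncate each $H_n(\bX_i)$ at spectral norm $B_n$: the discarded mass has expected spectral norm $o(1)$ by the tail integral in \ref{eq:Hn-bounds}, while the truncated summands satisfy a matrix Bernstein inequality with variance proxy $o(n/\ln p_n)$ from the first display of \ref{eq:Hn-bounds} and maximum norm $B_n = o(n/\ln p_n)$; this yields a Bernstein tail of order $2 p_n \exp\{-\omega(\ln p_n)\} = o(1)$ at any fixed positive deviation. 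The bound on $\Phi_n(\btheta^*)$ is elementary: independence of the $\bX_i$ together with $\sum_i \E[\phi_i(\btheta^*)] = \0$ gives
\[\E\|\Phi_n(\btheta^*)\|^2 = \tr\bigl(\var(\Phi_n(\btheta^*))\bigr) = n^{-1}\tr(I(\btheta^*)) = r_n^2,\]
so $\|\Phi_n(\btheta^*)\| = O_p(r_n)$ by Markov.

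For any $\eta > 0$, pick $M = M(\eta)$ with $\Pr(\|\Phi_n(\btheta^*)\| \le M r_n) \ge 1 - \eta/2$, and let $\Ecal_n := \{\lambda_{\max}(\bar H_n) \le -c/2\} \cap \{\|\Phi_n(\btheta^*)\| \le M r_n\}$, which has probability at least $1-\eta$ eventually. On $\Ecal_n$, setting $C := 3M/c$, any $\bu$ with $\|\bu\| = C r_n$ satisfies
\[\bu^\top \Phi_n(\btheta^* + \bu) \le -\tfrac{c}{2}\|\bu\|^2 + M r_n\|\bu\| = C r_n^2 \bigl(M - \tfrac{c}{2}C\bigr) < 0.\]
Applying Brouwer's fixed-point theorem to the continuous map $\bu \mapsto -C r_n\,\Phi_n(\btheta^* + \bu)/\|\Phi_n(\btheta^* + \bu)\|$ on the closed ball of radius $C r_n$ produces a zero of $\Phi_n$ inside that ball (which sits in $\Theta_n$ for $n$ large), proving part (i). For (ii), any solution $\hbtheta \in \Theta_n$ satisfies $0 = (\hbtheta - \btheta^*)^\top \Phi_n(\hbtheta)$, so on $\Ecal_n$ the same chain of inequalities forces $\|\hbtheta - \btheta^*\| \le 2 M r_n/c = O_p(r_n)$.

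The principal obstacle is the spectral concentration of $\bar H_n$: condition \ref{eq:Hn-bounds} is calibrated precisely so that truncated matrix Bernstein succeeds, but one must handle the non-identical distributions of the $H_n(\bX_i)$ and verify that the truncation bias $n^{-1}\sum_i \E[H_n(\bX_i)\,\ind_{\|H_n(\bX_i)\| > B_n}]$ is negligible in spectral norm via the tail integral. Once that is in place, the Markov bound on $\Phi_n(\btheta^*)$ and the Brouwer argument are essentially routine.
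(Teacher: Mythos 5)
Your proof follows essentially the same route as the paper: majorize via $H_n$ and Cauchy--Schwarz, concentrate $\bar H_n$ around its expectation with truncated matrix Bernstein (the paper's Lemma on $\|(\P_n - P)H_n\|$), bound $\|\Phi_n(\btheta^*)\|$ by Markov, and close with a Brouwer-type topological existence argument (the paper cites an intermediate-value theorem in $\R^p$, which is the same result). One small slip: with boundary condition $\bu^\top\Phi_n(\btheta^*+\bu) < 0$, the retraction should be $\bu \mapsto +Cr_n\,\Phi_n(\btheta^*+\bu)/\|\Phi_n(\btheta^*+\bu)\|$, and the conclusion is by contradiction with the assumed absence of a zero (your minus sign gives a fixed point but no contradiction). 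Your direct quadratic-inequality closure of part (ii) is in fact a bit cleaner than the paper's contradiction argument with a diverging normalizer.
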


\begin{theorem}  \label{theorem1-uniqueness}
  Suppose that \ref{A:Cons1} holds, and for any $\btheta, \btheta + \bu \in  \{\btheta: \|\btheta - \btheta^*\| \le a_n r_n \}$ with $a_n \to \infty$ arbitrarily slowly and all $\bx \in \Xcal$, it holds that
  \begin{align} \label{eq:Hn-def-unique}
    \bu^\top [\phi(\bx; \btheta + \bu) - \phi(\bx; \btheta )] \le \bu^\top H_n(\bx) \bu.
  \end{align}
  Then the solution $\hbtheta$ is unique on $\Theta_n$ with probability tending to 1.
\end{theorem}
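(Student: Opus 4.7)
The plan is to show that if $\hbtheta_1, \hbtheta_2 \in \Theta_n$ are two solutions of (\ref{eq:estim1}), they must coincide with probability tending to $1$. The key tool is the strengthened chord bound (\ref{eq:Hn-def-unique}), which extends \ref{A:Cons1}(i) beyond chords emanating from $\btheta^*$ to arbitrary chords within the ball of radius $a_n r_n$. Since the hypotheses of \cref{theorem1} are in force, every solution in $\Theta_n$ lies within distance $O_p(r_n)$ of $\btheta^*$. Picking $a_n \to \infty$ arbitrarily slowly, both $\hbtheta_1$ and $\hbtheta_2$ sit inside $\{\btheta : \|\btheta - \btheta^*\| \le a_n r_n\}$ on an event of probability tending to $1$, so (\ref{eq:Hn-def-unique}) is applicable along the segment connecting them.

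Setting $\bu = \hbtheta_2 - \hbtheta_1$ and $\btheta = \hbtheta_1$ in (\ref{eq:Hn-def-unique}), summing over $i$ and dividing by $n$ gives
\[
\bu^\top [\Phi_n(\hbtheta_2) - \Phi_n(\hbtheta_1)] \le \bu^\top \bar H_n \bu,
\]
with $\bar H_n = n^{-1}\sumin H_n(\bX_{i})$. Since both estimators solve (\ref{eq:estim1}), the left-hand side vanishes, and we are left with the one-sided inequality $0 \le \bu^\top \bar H_n \bu$.

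The heart of the argument is then to show $\lambda_{\max}(\bar H_n) \le -c/2$ on an event of probability tending to $1$. By \ref{A:Cons1}(ii), $\lambda_{\max}(\E[\bar H_n]) \le -c$ for all sufficiently large $n$, so it suffices to prove $\|\bar H_n - \E[\bar H_n]\| = o_p(1)$. This is the matrix-concentration estimate that already underlies \cref{theorem1}: after truncating each $H_n(\bX_i)$ at level $B_n$, the first bound in \ref{A:Cons1}(iii) feeds a matrix Bernstein inequality for symmetric matrices against the dimension factor $\log p_n$, while the second bound in \ref{A:Cons1}(iii) controls the contribution of the truncated tail in expectation. Together these yield $\|\bar H_n - \E[\bar H_n]\| = o_p(1)$, and hence $\lambda_{\max}(\bar H_n) \le -c/2$ with high probability.

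Putting the pieces together, on the intersection of the relevant high-probability events one has $0 \le \bu^\top \bar H_n \bu \le -(c/2)\|\bu\|^2$, forcing $\bu = \bnull$ and therefore $\hbtheta_1 = \hbtheta_2$. The only genuine technical obstacle is the spectral concentration of $\bar H_n$, but since this step is already an ingredient in the proof of \cref{theorem1} and is driven by precisely \ref{A:Cons1}(iii), the present lemma reduces to the algebraic chord manipulation that turns the two-solutions condition into a non-negative quadratic form in the asymptotically negative-definite matrix $\bar H_n$.
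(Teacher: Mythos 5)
Your proof is correct and follows essentially the same approach as the paper: set $\bu = \hbtheta_2 - \hbtheta_1$, apply the strengthened chord bound \eqref{eq:Hn-def-unique} to bound $0 = \bu^\top[\Phi_n(\hbtheta_2) - \Phi_n(\hbtheta_1)]$ above by the quadratic form $\bu^\top \bar H_n \bu$, and then invoke the matrix-concentration step $\|\bar H_n - \E[\bar H_n]\| = o_p(1)$ (the paper's \cref{lem:Hn-convergence}) together with \ref{A:Cons1}\ref{eq:Hn-identifiable} to get $\bu^\top \bar H_n \bu \le -(c/2)\|\bu\|^2$ with high probability, forcing $\bu = \bnull$.
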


The natural convergence rate $r_n = \sqrt{\tr(I(\btheta^*)) / n}$ is typically of order $\sqrt{p_n/n}$, for example when $\max_{i, k} \mathbb{E}[\phi_i(\btheta^*)_k^2] = O(1)$.
The trace formulation also accommodates diverging entries of $\phi$ and slower rates, as in the multi-sample problems of \cref{sec:MSample}.

To interpret condition \ref{A:Cons1}, consider the core idea of the proof.
A sufficient condition for the estimating equation \eqref{eq:estim1} to have a solution in a ball $\{\|\btheta - \btheta^*\| \le r_n C\}$ is  
$\sup_{\|\bu\| = r_n C} \bu^\top \Phi_n(\btheta^* + \bu)  \le 0$ 
  \citep[Theorem 2.3]{IVT}.
Condition \ref{eq:Hn-def} upper bounds the left-hand side via
\begin{align*}
  \bu^\top \Phi_n(\btheta^* + \bu) \le \bu^\top \Phi_n(\btheta^*) + \bu^\top \left( \frac 1 n \sumin H_n(\bX_i) \right) \bu.
\end{align*}
Standard arguments give $\|\Phi_n(\btheta^*)\| = O_p(r_n)$, while \ref{eq:Hn-bounds} and matrix concentration allow replacing the sample average by its negative definite expectation; see \ref{eq:Hn-identifiable}.
Thus, for $\|\bu\| = r_n C$ and sufficiently large $C$, the quadratic term dominates, yielding a solution in the ball with high probability.
Variations of this argument give consistency of other solutions and uniqueness.

The majorization approach controls the curvature around $\btheta^*$ uniformly without covering arguments, allowing non-restrictive growth conditions on $p_n$; constructions of $H_n$ are given in \cref{sec:Exmp}.
The function $H_n$ may be viewed as an envelope for the symmetrized Jacobians
\[
\left\{ H_{\btheta}(\bx) = \frac 1 2 \; \nabla_{\btheta} \phi(\bx; \btheta) + \frac 1 2 \; \nabla_{\btheta} \phi(\bx;\btheta)^\top \colon\btheta \in \Theta_n\right\}
\]
in the positive semi-definite ordering $\preceq$ of matrices,
but condition \ref{eq:Hn-def} is slightly weaker.
Condition \ref{eq:Hn-identifiable} only imposes concavity on average, not for every $\bx$.
Lemma 4 in the supplement converts rate conditions such as \ref{eq:Hn-bounds} into more interpretable moment conditions; for example, its second part holds if all $H_n(\bx)$ are negative semi-definite and $n^{-1} \sumin \E[H_n(\bX_{i})] = O(1)$.
Choosing $\Theta_n = \{\btheta: \|\btheta - \btheta^*\| \le  r_n a_n \}$ with $a_n \to \infty$ arbitrarily slowly gives the weakest conditions, as only a shrinking neighborhood of $\btheta^*$ matters; larger sets may require larger envelopes in \ref{A:Cons1}\ref{eq:Hn-def} and \eqref{eq:Hn-def-unique}.

The assumptions implicitly restrict the growth of $p_n$.
For M-estimation in GLMs, where $\phi_{i}(\btheta) = \psi( Y_{i}, \bX_{i}^\top \btheta) \bX_{i}$, they yield $p_n \ln p_n / n \to 0$ under standard assumptions, matching \citet{PortnoyCons}; see \cref{sec:MEstGen}. By contrast, \citet{Fan} required $p_n^4 / n \to 0$ for general nonlinear maximum likelihood problems.
Condition \ref{A:Cons1}\ref{eq:Hn-bounds} need not even require $p_n/n\to0$: for unrelated componentwise equations, $H_n$ can be diagonal, and uniformly bounded entries would allow $B_n=O(1)$ and $p_n\le e^{o(n)}$.

\subsection{Asymptotic Normality}

We now turn to the asymptotic normality of the estimator. Because the dimension of the parameter space grows with the sample size, we state this in terms of convergence of finite-dimensional projections.
Let $A_n \in \R^{q \times p_n}$ be some matrix.

\begin{enumerate}[label=(A\arabic*), resume=assump]
  \item \label{A:Asymp} For every $C \in (0, \infty)$ and some sequence $D_n = o(\sqrt{n} / (r_n \,p_n))$, with $\tilde \phi_i(\btheta) = \phi_i(\btheta)$ or $\tilde \phi_i(\btheta) = \phi_i(\btheta) - \E[\phi_i(\btheta)]$, it holds that
        \begin{align*}
          \sup_{\| \bu \|, \| \bu' \| \le  r_n C} \frac 1 n \sumin \frac{\E\lf[ \| A_n [\tilde \phi_{i}(\btheta^* +  \bu) - \tilde \phi_{i}(\btheta^* +  \bu')]\|^2\ri]}{\| \bu - \bu'\|^2}                        & = o\lf( \frac{1}{r_n^2 \,p_n} \ri),     \\
         \sumin    \Pr\lf( \sup_{\| \bu \|, \| \bu' \| \le r_n C} \frac{ \|A_n[\tilde \phi_{i}(\btheta^* +  \bu) - \tilde \phi_{i}(\btheta^* +  \bu')] \|}{\| \bu - \bu'\|}  >  D_n  \ri) & = o(1),                                 \\
          \sup_{\| \bu\|  \le r_n C} \| A_n[\bar J (\btheta^*, \btheta^* + \bu) - J(\btheta^*)] \|    & = o\lf(\frac{1}{\sqrt{n} \, r_n}\ri),
        \end{align*}
        where the same choice of $\tilde \phi_i(\btheta)$ must be used in both the first and the second condition.
      \item  \label{A:Cons2} It holds that $\textstyle \max_{1 \le i \le n}\mathbb{E}\left[ \| A_n \phi_{i}(\btheta^*)\|^4 \right] =o(n).$
\end{enumerate} Assumption \ref{A:Asymp} is a stochastic smoothness condition required to control fluctuations of the estimating equation.
Allowing for either $\phi_i(\btheta)$ or $\phi_i(\btheta) - \E[\phi_i(\btheta)]$ leads to sharp conditions in both standard and multi-sample settings.
The moment condition \ref{A:Cons2} typically requires $p_n^2/n \to 0$, e.g., if $\| A_n \| = O(1)$ and $\max_{i, k} \E[\phi_{i}(\btheta^*)_k^4] = O(1)$.

\begin{theorem}
  \label{theorem2}
  If conditions \ref{A:Cons1}--\ref{A:Cons2} hold for some matrix $A_n \in \R^{q \times p_n}$ with fixed $q$ for which $\Sigma = \lim_{n \to \infty} A_n I(\btheta^*)A_n^\top$ exists, it holds that $$\sqrt{n} A_n  J(\btheta^*)(\hbtheta - \btheta^*) \to_d \Ncal(\0, \Sigma).$$
\end{theorem}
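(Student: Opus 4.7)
By \cref{theorem1}, with probability tending to one there exists $\hbtheta \in \Theta_n$ with $\|\hbtheta - \btheta^*\| = O_p(r_n)$; I restrict attention to the event $\Ecal_n = \{\|\hbtheta - \btheta^*\| \le r_n C\}$ for some large $C < \infty$. Writing $\mu_n(\btheta) := \E[\Phi_n(\btheta)]$ (so $\mu_n(\btheta^*) = \0$), I start from $\Phi_n(\hbtheta) = \0$ and decompose
\begin{equation*}
  \0 = \Phi_n(\btheta^*) + E_n + J(\btheta^*)(\hbtheta - \btheta^*) + R_n,
\end{equation*}
where $E_n := [\Phi_n(\hbtheta) - \Phi_n(\btheta^*)] - [\mu_n(\hbtheta) - \mu_n(\btheta^*)]$ is the centered empirical process increment and $R_n := \mu_n(\hbtheta) - \mu_n(\btheta^*) - J(\btheta^*)(\hbtheta - \btheta^*)$ is the linearization remainder. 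Left-multiplying by $\sqrt{n}\, A_n$ yields
\begin{equation*}
  \sqrt{n}\, A_n J(\btheta^*)(\hbtheta - \btheta^*) = -\sqrt{n}\, A_n \Phi_n(\btheta^*) - \sqrt{n}\, A_n E_n - \sqrt{n}\, A_n R_n,
\end{equation*}
so the target reduces to showing that the last two terms are $o_p(1)$ while the first converges in distribution to $\Ncal(\0, \Sigma)$.

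\textbf{Linearization remainder and central limit theorem.} By the fundamental theorem of calculus, $R_n = \int_0^1 [J(\btheta^* + t(\hbtheta - \btheta^*)) - J(\btheta^*)]\, dt \cdot (\hbtheta - \btheta^*)$, so the third bound of \ref{A:Asymp} combined with $\|\hbtheta - \btheta^*\| = O_p(r_n)$ gives $\|\sqrt{n}\, A_n R_n\| \le \sqrt{n} \cdot o(1/(\sqrt{n}\, r_n)) \cdot O_p(r_n) = o_p(1)$. For the leading term, $-\sqrt{n}\, A_n \Phi_n(\btheta^*) = -n^{-1/2}\sumin A_n \phi_i(\btheta^*)$ is a triangular array of independent mean-zero vectors with covariance $A_n I(\btheta^*) A_n^\top \to \Sigma$. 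Assumption \ref{A:Cons2} yields the Lyapunov condition $n^{-2}\sumin \E\|A_n \phi_i(\btheta^*)\|^4 \le n^{-1}\max_i \E\|A_n \phi_i(\btheta^*)\|^4 = o(1)$, so the triangular-array Lindeberg--Feller CLT combined with the Cram\'er--Wold device gives weak convergence to $\Ncal(\0, \Sigma)$.

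\textbf{Stochastic equicontinuity --- the main obstacle.} The crux is $\sqrt{n}\, A_n E_n = o_p(1)$. On $\Ecal_n$ this is bounded by the supremum over $\|\bu\| \le r_n C$ of the centered process $\bu \mapsto \sqrt{n}\, A_n\bigl([\Phi_n - \mu_n](\btheta^* + \bu) - [\Phi_n - \mu_n](\btheta^*)\bigr)$. My plan is a truncation-and-chaining argument: (i)~restrict to the event, of probability $1 - o(1)$ by the second part of \ref{A:Asymp}, that the $A_n$-pullback Lipschitz modulus of every $\phi_i$ over the ball is at most $B_n$; (ii)~on that event, each centered pairwise increment has variance bounded by $\|\bu - \bu'\|^2 \cdot o(1/(r_n^2 p_n))$ by the first part of \ref{A:Asymp}; (iii)~apply a Dudley-type generic chaining bound on the ball, whose metric entropy integral is of order $r_n \sqrt{p_n}$, so that the expected supremum is at most $\sqrt{o(1/(r_n^2 p_n))} \cdot r_n \sqrt{p_n} = o(1)$. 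The three rate conditions of \ref{A:Asymp} are calibrated precisely so that the $1/p_n$ factor in the variance bound cancels the $\sqrt{p_n}$ arising from the metric dimension of the ball, while $B_n = o(\sqrt{n}/(r_n p_n))$ ensures the truncation bias vanishes after rescaling by $\sqrt{n}$. The main technical subtlety lies in coordinating these three rates simultaneously as $p_n$, $r_n$, and $B_n$ all change with $n$.
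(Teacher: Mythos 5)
Your proposal is correct and follows essentially the same path as the paper's proof: the same decomposition into the linear term $\sqrt{n}A_n\Phi_n(\btheta^*)$, a $J$-fluctuation remainder controlled by the third condition of \ref{A:Asymp}, and a centered empirical-process increment controlled by truncation plus a mixed $(L_2,L_\infty)$-entropy chaining bound (the paper's \cref{lem:AsN1}/\cref{lem:mixed-entropy}, which is precisely the van der Vaart--Wellner mixed-tail inequality you gesture at), followed by Lyapunov/Lindeberg--Feller for the leading term using \ref{A:Cons2}. The only cosmetic difference is that you write the linearization remainder in the integral (fundamental-theorem-of-calculus) form rather than the paper's mean-value intermediate $\bttheta$, which is the cleaner rendering of the same bound and avoids the slight abuse of notation in applying a scalar mean-value theorem componentwise to a vector-valued map.
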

In the finite-dimensional setting ($p_n = p < \infty$), choosing $A_n = J(\btheta^*)^{-1}$ yields the familiar statement
\[
\sqrt{n} (\hbtheta - \btheta^*) \to_d \Ncal(\bnull, J(\btheta^*)^{-1} I(\btheta^*) J(\btheta^*)^{-\top} ).
\]
The proof of \cref{theorem2} uses mixed-entropy inequalities \citep[Section 2.14]{vdV} building on Talagrand's work on generic chaining \citep{talagrand2005generic} to obtain weak conditions. 
For $M$-estimators in the linear regression model, we obtain asymptotic normality if $p_n^2 \ln n/n \to 0$ under standard assumptions, see \cref{cor:Mest} in \cref{sec:MEstGen}.
\citet{PortnoyAsymp} and \citet{Mammen89} obtain the slightly weaker condition $p_n^{3/2} \ln n / n \to 0$.
Both authors exploit the linear model structure through fourth-order expansions and a design-dependent standardization, effectively alleviating the need for the sample Hessian to converge. \citet{portnoy1986central} showed that a general CLT cannot hold unless $p_n^2/n \to 0$, so our result appears to be close to optimal.
\citet{Fan} require the much stronger condition $p_n^5 / n \to 0$ for maximum likelihood estimation in general nonlinear models.

\section{Penalized Estimation}
\label{sec3}

\subsection{Penalization of Estimating Equations}
\label{sec:notation_penalty}

We now allow $p_n$ to exceed $n$ while assuming that the target parameter $\btheta^*$ is sparse.
To motivate our formulation for general estimating equations, consider a penalized risk minimization problem.
Let $L$ be some differentiable loss function and $p_{\lambda_n}(\btheta )$ be a function penalizing the magnitude of $\btheta$, e.g., the $\ell_1$-penalty $p_{\lambda_n}(\btheta ) = \lambda_n \| \btheta \|_1$.
The estimator is given by
\begin{equation*}
\hbtheta =
  \argmin_{ \btheta }\left( \frac{1}{n} \sum_{i=1}^n L(\boldsymbol{X}_{i}; \btheta) + p_{\lambda_n}(\btheta ) \right).
\end{equation*}
If both penalty and loss are differentiable, the first-order condition for $\hbtheta$ is
\begin{align*}
  \frac{1}{n} \sum_{i=1}^n -\nabla_{\btheta}  L(\boldsymbol{X}_{i}; \hbtheta) - \nabla_{\btheta} p_{\lambda_n}(\hbtheta )  = \0.
\end{align*}
Sparsity inducing penalties are not differentiable at points with zero entries.
The vector $\hbtheta$ is a solution to the minimization problem when the gradients cross $\0$ at $\hbtheta$, either continuously or with a jump.

To appropriately deal with non-differentiable or nonconvex penalties, we reformulate the estimation problem.
For a locally Lipschitz function $f \colon \R^p \to \R$, 
\citet[Chapter 2]{clarke1990optimization} uses the \emph{generalized directional derivative} of $f$ at $\btheta$ in the direction $\bDelta$, denoted by $f^\circ(\btheta; \bDelta)$, to define the \emph{generalized gradient} of $f$ at $\btheta$, denoted by $\partial f(\btheta)$: 
\[
\partial f(\btheta) = \{ \bz \in \R^p :  f^\circ(\btheta; \bDelta) \ge \langle \bz, \bDelta \rangle \, \forall \bDelta \in \R^p \}, \quad \text{where } f^\circ(\btheta; \bDelta)  = \limsup_{\bttheta \to \btheta, \, t \downarrow 0 } \frac{f(\bttheta + t \bDelta) - f(\bttheta)}{t}.
\]
The generalized gradient reduces to the usual gradient if $f$ is differentiable.
For convex functions, $\partial f (\btheta)$ coincides with the \emph{subdifferential}, i.e., the set of all $\bz \in \R^p$ satisfying $f(\btheta + \bDelta) - f(\btheta) \geq  \langle \bz,  \bDelta \rangle$.
With this notation, the estimator $\hbtheta$ of a penalized risk minimization problem solves $\frac{1}{n} \sum_{i=1}^n -\nabla_{\btheta}  L(\boldsymbol{X}_{i}; \hbtheta)   \in \partial p_{\lambda_n}(\hbtheta )$.
More generally, we define a penalized estimator $\hbtheta$ as a solution to
\begin{equation}
  \label{eq:Def_M}
  \Phi_n(\btheta) :=  \frac{1}{n} \sum_{i=1}^n \phi_{i}(\btheta) \in \partial p_{\blambda_n}(\btheta).
\end{equation}
We allow for a vector of tuning parameters $\blambda_n $. This encompasses, for example, coordinate-separable penalties of the form $p_{\blambda_n}(\btheta) = \sum_{k=1}^{p_n}   p_{\lambda_{n, k}}(\theta_{k})$, which can be useful when the estimator $\hbtheta$ is composed of solutions to sub-problems of different dimensionality or sample size.
The target parameter $\btheta^*$ still solves the unpenalized population equation $\sum_{i=1}^n \E[\phi_{i}(\btheta^*)] = \0$.
 In what follows, we assume without loss of generality that the true $\btheta^*$ can be written as $\btheta^* = (\btheta^*_{(1)},  \btheta^*_{(2)})$ with $\btheta^*_{(1)} \in \R^{s_n}$ and $\btheta^*_{(2)} = \0 \in \R^{p_n - s_n}$.
Similarly, write $\bv_{(1)} = (v_1, \dots, v_{s_n})$, $\bv_{(2)} = (v_{s_n + 1}, \dots, v_{p_n})$ for any vector $\bv \in \R^{p_n}$.
By $\partial f(\btheta)_{(1)}$ and $\partial f(\btheta)_{(2)}$, we denote the sets of all subvectors $\bz_{(1)}$ and $\bz_{(2)}$ such that $\bz \in \partial f(\btheta)$.

A penalty that is not differentiable at $\0$ can induce sparsity. To illustrate this, consider the Lasso penalty $p_{\lambda_n}(\btheta) = \lambda_n \|\btheta\|_1$ with scalar $\lambda_n$.
Its generalized gradient is a singleton at nonzero coordinates and the interval $[-\lambda_n,\lambda_n]$ at zero.
Hence, a nonzero solution satisfies $\Phi_n(\hbtheta)_k = \lambda_n \sign(\hat\theta_k)$, which induces shrinkage, while $\hat\theta_k=0$ only requires $\Phi_n(\hbtheta)_k \in [-\lambda_n,\lambda_n]$.
This additional slack makes sparse solutions possible; SCAD \citep{Fan1997, Fan01} and MCP \citep{Zhang10} have the same behavior near zero.

To cover non-coordinate-separable penalties such as the Group Lasso \citep{GroupL}, let $G_1, \ldots, G_{K_n} \subseteq \{1,\ldots,p_n\}$ be potentially overlapping groups covering all coordinates, and denote the corresponding subvectors by $\btheta_{G_g}$ and $\partial f(\btheta)_{G_g}$.
Let $I_{(2)}$ contain the groups associated with $\btheta_{(2)}$, so that $\bigcup_{g \in I_{(2)}}G_g=\{s_n+1,\ldots,p_n\}$, and define $I_{(1)}$ analogously.
The Group Lasso $p_{\blambda_n}(\btheta) =  \sum_{g= 1}^{K_n} \lambda_{n, g}  \| \btheta_{ G_g} \|_2$ replaces the interval at an inactive coordinate by an $\ell_2$-ball on an inactive group.
The supplement lists popular penalties covered by this framework and their generalized gradients.
To our knowledge, existing results do not jointly cover nonconvex and group-structured penalties, including combinations such as Group SCAD.

\subsection{Conditions for the Penalty} \label{sec:penalty_examples}

Let $\tilde r_n$ be the target rate defined in \cref{subsec:assump_penalty}.
Retain the partition $\btheta=(\btheta_{(1)},\btheta_{(2)})$, the groups $G_1,\ldots,G_{K_n}$, and the index sets $I_{(1)}$ and $I_{(2)}$ introduced above.
Define $\Theta_n' = \{(\btheta_{(1)}, \0)\colon \|\btheta - \btheta^*\| \le \tilde r_n a_n\}$, where $a_n \to \infty$ arbitrarily slowly, and let $\Theta_n \supseteq \Theta_n'$ be specified in the theorems below.

\begin{enumerate}[label=(P\arabic*)]

  \item \label{A:Penalty1} The penalty $p_{\blambda_n}$ is twice continuously differentiable with respect to $\btheta_{(1)}$ at $\btheta \in \Theta'_{n}$ with $ \sup_{\btheta \in \Theta'_{n} } \|\bar p_{\blambda_n}(\btheta^*, \btheta)  \| = o(1)$, where $\bar p_{\blambda_n}(\btheta^*, \btheta) \coloneqq \int_0^1 \nabla^2_{\btheta_{(1)}} p_{\blambda_n}(\btheta) \rvert_{\btheta = \btheta^* + t(\btheta - \btheta^*)} dt$.
\end{enumerate}
Note that this implies that $\partial p_{\blambda_n} (\btheta)_{(1 )}$ contains the single vector $\nabla_{\btheta_{(1)}} p_{\blambda_n}(\btheta)$ for $\btheta \in \Theta'_n$.
\begin{enumerate}[label=(P\arabic*), resume]
  \item \label{A:Penalty2} The generalized gradient satisfies, for all $\btheta \in \Theta_n'$ and $g \in I_{(2)}$,
  $
  \partial p_{\blambda_n}(\btheta)_{G_g} \supseteq B(\0_{ | G_g |}, \lambda_{n,g}).
  $

  \item \label{A:Penalty3} There is $\mu_n \ge 0$ such that for any $\btheta, \btheta' \in \Theta_n$ and $\bz \in \partial p_{\blambda_n}(\btheta), \bz' \in \partial p_{\blambda_n}(\btheta')$,
        \begin{align*}
          \langle \btheta' - \btheta, \bz' - \bz \rangle \ge - \mu_n \|\btheta' - \btheta\|^2.
        \end{align*}

  \item \label{A:Penalty4} The Hessian matrix of $p_{\blambda_n}$  satisfies $\sup_{\btheta \in \Theta'_{n} } \| \bar p_{\blambda_n}(\btheta^*, \btheta)  \| = o( (\sqrt{n} \, \tilde r_n )^{-1})$.
\end{enumerate}
 For the Lasso and SCAD, differentiability in \ref{A:Penalty1} requires the beta-min condition $\min_{1 \leq k \leq s_n} | \theta^*_{k} | / \tilde r_n \to \infty$; for the Group Lasso, the active groups must likewise remain separated from zero.
The Hessian condition is mild and typically implied by $\max_g \lambda_{n, g} \to 0$.

Assumption \ref{A:Penalty2} induces sparsity.
It formalizes the sparsity mechanism described above 
reduces to
\[
 \partial p_{\blambda_n}(\btheta)_{(2)} \supseteq [-\lambda_{n, s_n + 1}, \lambda_{n, s_n + 1} ] \times \cdots \times  [-\lambda_{n, p_n}, \lambda_{n, p_n} ],
\]
for coordinate-separable penalties. The general condition holds with equality for (Group) Lasso, (Group) SCAD, and MCP; the more general inclusion also covers $\ell_q$-penalties with $q < 1$ and asymmetric penalties.
Assumption \ref{A:Penalty3} controls nonconvexity and is used for uniqueness.
It holds with $\mu_n=0$ for convex penalties, with $\mu_n=(a-1)^{-1}$ and $a^{-1}$ for (Group) SCAD and MCP, respectively, and is weaker than the $\mu$-amenability condition of \citet{LohWW2,Loh}.
It fails for $\ell_q$-penalties with $q<1$, which cannot yield unique solutions unless $\btheta^*=\0$.
Finally, \ref{A:Penalty4} refines \ref{A:Penalty1} for asymptotic normality and is immediate for penalties such as Lasso and SCAD once their Hessian vanishes near the active coefficients.
Having isolated the required properties of the penalty, we now combine them with curvature and stochastic regularity conditions on the estimating equation.

\subsection{Notation and Assumptions}\label{subsec:assump_penalty}

To simplify some conditions, we assume from now on that $p_n \ge n^{a}$ for some $a > 0$. This only excludes uninteresting edge cases where $p_n$ is effectively constant.
Define for $A \in \R^{p_n \times p_n}$
\begin{align*}
A_{(1)} & = \left(A_{ij}\right)_{i, j = 1, \ldots, s_n} \in \R^{s_n \times s_n}, \quad 
A_{(2,1)} = \left(A_{ij}\right)_{i = s_n + 1, \ldots, p_n; j = 1, \ldots, s_n} \in \R^{(p_n - s_n) \times s_n}, \\
A_{G_g, (1)} & = \left(A_{ij}\right)_{i \in G_g; j = 1, \ldots, s_n} \in \R^{|G_g| \times s_n},
\end{align*}
e.g., $J(\btheta)_{G_g, (1)}  = \frac 1 n \sum_{i = 1}^n \nabla^\top_{\btheta_{(1)}}   \E[\phi_{}(\bX_{i};\btheta)_{G_g}] $.
To state the required assumptions, let 
\begin{align}
\label{eq:rTilde}
  b_n^*= \|\nabla_{\btheta_{(1)}} p_{\blambda_n}(\btheta^*) \|_{\infty}, \quad \bar b_{n} =  \sup_{\bv \in \partial p_{\blambda_n}(\btheta), \ \btheta \in \Theta_n}\|\bv \|_\infty, \quad \tilde r_n =\sqrt{\tr(I(\btheta^*)_{(1)})/n} + \sqrt{s_n}b_n^*,
\end{align}
and let $\eta_n$ be any sequence such that $\eta_n \ge 2 \sigma_n \sqrt{\ln(p_n)/n}$, where $\sigma_n = \max_{1 \le k \le p_n} \sqrt{\frac 1 n \sumin \E[ \phi_{i}(\btheta^*)_k^2]}$.
Recall $\Theta_n' = \{(\btheta_{(1)}, \0)\colon \|\btheta - \btheta^*\| \le \tilde r_n a_n\}$ with $a_n \to \infty$ arbitrarily slowly and $\btheta_{(1)} \in \R^{s_n}$. 
Define the cones 
\[
\Theta^*(\nu_n) =  \{\btheta \colon \|\btheta^* - \btheta\|_1 \le \sqrt{\nu_n} \|\btheta^* - \btheta\|_2 \},
\]
and suppose that $\Theta_n$ are sets such that $\Theta_n'\subseteq \Theta_n \subseteq \Theta^*(\nu_n)$ for some $\nu_n$.

The regularity conditions below have two roles: Assumptions \ref{A:phi-H-penalty} and \ref{A:phi-moments} control curvature and score tails, while \ref{A:lambda}--\ref{A:Penalty_emp_pr2} ensure that a solution of the reduced problem also solves the full penalized equation.

\begin{enumerate}[label=(A\arabic*), resume=assump]
  \item \label{A:phi-H-penalty} There exists a sequence of symmetric, matrix-valued functions $H_n(\bx)$ such that:
  \begin{enumerate}[label = (\roman*)]
    \item \label{eq:Hn-def2} For all  $\btheta^* + \bu \in \Theta_n$ and $\bx \in \Xcal$, it holds that
    $\bu^\top [\phi(\bx; \btheta^* + \bu) - \phi(\bx; \btheta^* )] \le \bu^\top H_n(\bx) \bu;$
    \item \label{eq:Hn-identifiable2}  $\limsup_{n \to \infty}\lambda_{\max}(n^{-1} \sumin\E[H_n(\bX_{i})]) \le -c < 0;$  
    \item  \label{eq:Hn-bounds2} For some sequence $\tilde B_n = o(n/(\nu_n \ln p_n))$, it holds that
    \begin{align*}
      \max_{1 \le j, k \le p_n} \frac 1 n  \sumin  \E[H_n(\bX_{i})_{j, k}^2]  & = o\left(\frac{n }{\nu^2_n \ln p_n}\right), \\ \sumin \Pr\lf(\max_{1 \le j, k \le p_n}  | H_n(\bX_{i})_{j, k} | > \tilde B_n\ri) & = o(1).
    \end{align*}
  \end{enumerate}

  \item \label{A:phi-moments} 
  It holds that
$\displaystyle \sum_{i = 1}^n \Pr\lf(\|\phi_{i}(\btheta^*)\|_\infty > \sigma_n \sqrt{\frac{n}{4\ln p_n}} \ri) = o(1).$
\item \label{A:lambda}
        $\bar J (\btheta^*, \btheta)_{(1)}$ is invertible for all $\btheta \in \Theta'_n$ and there is $\alpha\in [0,1)$ such that 
        \begin{equation}
          \label{eq:alpha}
           \max_{g \in I_{(2)}} \lambda_{n,g}^{-1} \|  \bar J(\btheta^*, \btheta)_{G_g,(1)} \;\bar J(\btheta^*, \btheta)^{-1}_{(1)} \;  \nabla_{\btheta_{(1)}} p_{\blambda_n}(\btheta) \|_2 \le \alpha \quad \text{ uniformly for all $\btheta \in \Theta'_n$}.
        \end{equation}
        
  \item \label{A:lambda2} For all $g \in I_{(2)}$, $\lambda_{n,g}$ must fulfill, with $\alpha$ as defined in \ref{A:lambda} and $\eta_n$ as defined above,
        \[
          \lambda_{n,g} \ge  \frac{4}{1-\alpha} \: \eta_n \: \sqrt{| G_g |} \quad \text{and} \quad \lambda_{n,g} \ge \frac{4}{1-\alpha} \: \eta_n \: J_{n,g}  ,
        \]
        where $J_{n,g} = \sup_{\btheta \in  \Theta'_n, \| \bx \|_\infty \le 1}\| \bar J(\btheta^*, \btheta)_{G_g,(1)} \;\bar J(\btheta^*, \btheta)^{-1}_{(1)}  \bx \|_2$.

  \item \label{A:Penalty_emp_pr2} With $\tilde r_n$, $\eta_n$ as defined above and some sequence $\tD_n = o( n \eta_n / (\tilde r_ns_n +\tilde r_n\ln p_n))$ and $\tilde \phi_i(\btheta) = \phi_i(\btheta)$ or $\tilde \phi_i(\btheta) = \phi_i(\btheta) - \E[\phi_i(\btheta)]$, it holds that
        \begin{align*}
            \max_{1 \le k \le p_n}  \sup_{\btheta, \btheta' \in \Theta'_n} \frac 1 n \sumin \frac{\E[| \tilde \phi_{i}(\btheta)_k - \tilde \phi_{i}(\btheta')_k |^2]}{\| \btheta - \btheta' \|^2} & = o\lf(\frac{n \eta_n^2 }{\tilde r_n^2(s_n + \ln p_n)}\ri),      \\
          \sumin \Pr\lf( \sup_{\btheta, \btheta' \in \Theta'_n}  \frac{\|\tilde \phi_{i}(\btheta) - \tilde \phi_{i}(\btheta') \|_\infty}{\| \btheta - \btheta' \|} > \tD_n \ri)   & = o(1),
        \end{align*}
        where the same choice of $\tilde \phi_i(\btheta)$ must be used in both conditions.
\end{enumerate} Assumption \ref{A:phi-H-penalty} is a variant of \ref{A:Cons1} ensuring restricted concavity with high probability; see the following section.
Assumption \ref{A:phi-moments} trades off the tails of the estimating equation against $p_n$: if $\sigma_n = O(1)$ and all $\phi_{}(\bX_{i}; \btheta^*)_k$ are sub-Gaussian, it allows $p_n \sim e^{n^a}$ for $a \in (0, 1/2)$.
Assumption \ref{A:lambda} generalizes the \emph{mutual incoherence} or \emph{irrepresentable conditions} \citep{WWbook, Zhao, BuehlBook}; see \cref{sec:mutual}, while \ref{A:lambda2} makes the inactive-group penalties large enough to induce sparsity.
Unlike sample-dependent formulations based on $n^{-1}\|\sumin \phi_{i}(\btheta^*)\|_{\infty}$ \citep{WWpaper, LohWW}, we use the population bound $\eta_n$.
Finally, \ref{A:Penalty_emp_pr2} controls convergence to the population equation and imposes growth restrictions on $p_n$ analogous to \ref{A:phi-moments}.
Increasing $\eta_n$ weakens this condition but requires larger $\lambda_{n,g}$ in \ref{A:lambda2}, which is less problematic for nonconvex penalties because these parameters typically do not enter the convergence rate; see the discussion after \cref{theorem4}.

\subsection{Estimation and selection consistency}
\label{sec:ass_penalty}
\subsubsection{Main results}\label{sec:main-results-pen}

We will provide three main results for penalized estimators. The first guarantees that any sparse solution to the penalized estimating equation is close to the true parameter vector.

\begin{theorem} \label{theorem3} 
  Suppose that \ref{A:phi-H-penalty} and \ref{A:phi-moments} hold.
  Then, any solution $\hbtheta \in \Theta_n \subseteq \Theta^*(\nu_n)$ satisfies 
  \begin{align*}
    \lVert \hbtheta - \btheta^* \rVert = O_p\left(\sqrt{\nu_n}(\eta_n + \bar b_{n}) \right).
  \end{align*}
\end{theorem}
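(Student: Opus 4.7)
The plan is to combine the envelope majorization from \ref{A:phi-H-penalty} with sparsity-style bookkeeping on $\hbu = \hbtheta - \btheta^*$, exploiting the cone restriction $\hbu \in \Theta^*(\nu_n)$ to convert $\ell_\infty$-type control on the score and the subgradient into $\ell_2$ control on the error.

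First, I would use the defining inclusion $\Phi_n(\hbtheta) = \bv \in \partial p_{\blambda_n}(\hbtheta)$ together with assumption \ref{A:phi-H-penalty}\ref{eq:Hn-def2}. Setting $\bar H_n = n^{-1}\sum_{i=1}^n H_n(\bX_i)$ and $\hbu = \hbtheta - \btheta^*$, the envelope bound applied pointwise and averaged yields
\begin{align*}
  \hbu^\top \bv - \hbu^\top \Phi_n(\btheta^*) = \hbu^\top[\Phi_n(\hbtheta) - \Phi_n(\btheta^*)] \le \hbu^\top \bar H_n \hbu,
\end{align*}
so that $c_0 \|\hbu\|_2^2 \le |\hbu^\top \Phi_n(\btheta^*)| + |\hbu^\top \bv|$ provided we establish $\hbu^\top \bar H_n \hbu \le -c_0 \|\hbu\|^2$ with high probability. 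The three concluding pieces will then be the stochastic bounds on $\hbu^\top \bar H_n \hbu$, on $\|\Phi_n(\btheta^*)\|_\infty$, and the observation that $\|\bv\|_\infty \le \bar b_n$ by definition.

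Next I would establish the restricted negative-definiteness of $\bar H_n$. By \ref{A:phi-H-penalty}\ref{eq:Hn-identifiable2}, $\lambda_{\max}(\E[\bar H_n]) \le -c$ for large $n$, so it suffices to show $|\hbu^\top(\bar H_n - \E \bar H_n)\hbu| \le (c/2) \|\hbu\|_2^2$ uniformly on the cone. Since the entry-wise moment and truncation conditions in \ref{A:phi-H-penalty}\ref{eq:Hn-bounds2} are calibrated in terms of $\nu_n$, a truncation at level $B_n$ (handled via $\sum_i \Pr(\max_{j,k}|H_n(\bX_i)_{j,k}|>B_n) = o(1)$) combined with a scalar Bernstein inequality and a union bound over the $p_n^2$ entries yields $\|\bar H_n - \E \bar H_n\|_{\max} = o_p(1/\nu_n)$. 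For any $\hbu \in \Theta^*(\nu_n)$,
\begin{align*}
  |\hbu^\top(\bar H_n - \E \bar H_n)\hbu| \le \|\hbu\|_1^2\, \|\bar H_n - \E \bar H_n\|_{\max} \le \nu_n \|\hbu\|_2^2 \cdot o_p(1/\nu_n) = o_p(\|\hbu\|_2^2),
\end{align*}
which delivers the desired bound $\hbu^\top \bar H_n \hbu \le -(c/2)\|\hbu\|_2^2$ w.h.p.

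Third, I would show $\|\Phi_n(\btheta^*)\|_\infty \le \eta_n/2$ with probability tending to one. Since $\E[\phi_i(\btheta^*)] = 0$, each coordinate $\Phi_n(\btheta^*)_k$ is a centered independent sum with variance at most $\sigma_n^2/n$. The tail condition \ref{A:phi-moments} lets us truncate each $\phi_i(\btheta^*)_k$ at $\sigma_n \sqrt{n/(4\ln p_n)}$ with negligible error, after which Bernstein's inequality plus a union bound over $k = 1, \dots, p_n$ gives $\Pr(\|\Phi_n(\btheta^*)\|_\infty > \sigma_n \sqrt{\ln p_n /n}) \to 0$; since $\eta_n \ge 2\sigma_n\sqrt{\ln p_n/n}$ by \eqref{eq:DefEta}, this yields the claimed bound.

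Finally, assembling everything, with high probability
\begin{align*}
  \frac{c}{2}\|\hbu\|_2^2 \le |\hbu^\top \Phi_n(\btheta^*)| + |\hbu^\top \bv| \le \|\hbu\|_1 \bigl(\|\Phi_n(\btheta^*)\|_\infty + \|\bv\|_\infty\bigr) \le \sqrt{\nu_n}\,\|\hbu\|_2\,(\eta_n + \bar b_n),
\end{align*}
where I used H\"older's inequality, the cone condition $\|\hbu\|_1 \le \sqrt{\nu_n}\|\hbu\|_2$, and $\|\bv\|_\infty \le \bar b_n$. Dividing by $\|\hbu\|_2$ gives the rate $\|\hbu\|_2 = O_p(\sqrt{\nu_n}(\eta_n + \bar b_n))$.

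The main obstacle is the restricted quadratic form control in step two: we have only entry-wise moment and truncation assumptions on $H_n$ (rather than operator-norm bounds), so the spectral control must be obtained through the $\ell_\infty/\ell_1$ duality on the cone $\Theta^*(\nu_n)$ and an entry-wise Bernstein bound, rather than via a matrix Bernstein inequality. The calibration of all the rates (the $\nu_n^2 \ln p_n$ budget in \ref{A:phi-H-penalty}\ref{eq:Hn-bounds2} and the factor $\sqrt{\nu_n}$ absorbed by the cone) has to line up exactly so that the $o_p(1/\nu_n)$ max-norm deviation is sufficient.
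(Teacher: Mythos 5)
Your proof is correct and takes essentially the same route as the paper's: you combine the envelope majorization in \ref{A:phi-H-penalty}\ref{eq:Hn-def2} with an entrywise Bernstein bound on $\bar H_n - \E\bar H_n$ (the paper's \cref{lem:Hn-convergence-pen}), the tail-truncation argument for $\|\Phi_n(\btheta^*)\|_\infty \le \eta_n$ (the paper's \cref{lem:eta_n}), and H\"older plus the cone relation $\|\hbu\|_1 \le \sqrt{\nu_n}\|\hbu\|_2$ on the cross terms. The only cosmetic difference is that you move the terms $\hbu^\top\bv$ and $\hbu^\top\Phi_n(\btheta^*)$ to the left of the inequality and bound their absolute values, whereas the paper keeps the identity $0 = \langle\hbu, \Phi_n(\hbtheta) - p'_{\blambda_n}(\hbtheta)\rangle$ and majorizes the right-hand side — algebraically identical.
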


The optimal rate of convergence $\sqrt{s_n} \eta_n$ is attained if $\nu_n = O(s_n)$ and $\bar b_n = O(\eta_n)$ which typically holds if $\|\blambda_n\|_\infty = O(\eta_n)$. In most classical applications, this rate simplifies to the usual convergence rate $\sqrt{s_n \ln p_n / n}$.
A similar rate was obtained by \citet{LohWW} under a \emph{restricted strong convexity} (RSC) condition on the realized sample, whereas \cref{theorem3} uses population-level conditions.
In most standard cases, \ref{A:phi-H-penalty} implies with high probability the weaker RSC-type condition
\begin{align*}
  \lf\langle \bu, \Phi_n(\btheta^*) - \Phi_n(\btheta^* + \bu)  \ri\rangle \ge  c\|\bu\|^2 - c_1\|\bu\|_1^2 \eta_n,
\end{align*}
for some constant $c_1 \ge 0$.
Unlike \cref{theorem3}, which is restricted to cones $\Theta^*(\nu_n)$, \citet{LohWW, WWpaper} and \citet[Chapter 9]{WWbook} allow larger sets of the form $\{\|\btheta\|_1 \le k_n\}$ by imposing the stronger condition
\begin{align} \label{eq:RSC-LW}
  \lf\langle \bu, \Phi_n(\btheta^*) - \Phi_n(\btheta^* + \bu)  \ri\rangle \ge  c\|\bu\|^2 - c_1\|\bu\|_1^2 \eta_n^2,
\end{align}
with high probability for all sufficiently small $\bu$.
Under mild assumptions on the penalty and tuning parameter, \eqref{eq:RSC-LW} implies that $\hbtheta \in \Theta^*(\nu_n)$ with $\nu_n = O(s_n)$; see Lemma 5 in the supplement.
To the best of our knowledge, \eqref{eq:RSC-LW} has only been verified for variations of the linear model, where the rank-one structure of the sample-Hessian can be exploited, and it fails in the example of \cref{sec:RSC-failure}.
Restricting the statement to cones avoids this issue: any $\nu_n$-sparse solution belongs to $\Theta^*(\nu_n)$, so \cref{theorem3} applies.

\cref{theorem3} does not say anything about existence of solutions and selection consistency, which also explains why it does not require a lower bound on the tuning parameters.
The next theorem shows that there is an estimation and selection consistent solution to the penalized equation with high probability.
Specifically, we show that, with high probability, the reduced problem 
\begin{align} \label{eq:Def_M_reduced}
\frac{1}{n} \sum_{i = 1}^n \phi_{i}((\btheta_{(1)}, \0))_{(1)} \in \partial p_{\blambda_n}((\btheta_{(1)}, \0))_{(1)}.
\end{align}
has a solution $\hbtheta_{(1)}$ close to $\btheta_{(1)}^*$, and that $(\hbtheta_{(1)}, \0)$ is also a solution to the full problem \eqref{eq:Def_M}. 
\begin{theorem}
  \label{theorem4}
  Suppose the reduced problem \eqref{eq:Def_M_reduced} satisfies \ref{A:Cons1} on sets $ \{\btheta_{(1)}\colon \btheta \in \Theta_n'\}$.
  Suppose further that \ref{A:Penalty1}, \ref{A:Penalty2} and \ref{A:phi-moments}--\ref{A:Penalty_emp_pr2} hold.
  Then, with probability tending to $1$, the sets $\Theta_n'$ contain a solution $\hbtheta$ of the penalized estimating equation (\ref{eq:Def_M}) such that
  \[ 
  \lVert \hbtheta - \btheta^* \rVert = O_p\left(\sqrt{\tr(I(\btheta^*)_{(1)})/n} + \sqrt{s_n}b_n^*\right), \quad \hbtheta_{(2)}  =  \0.
  \]
\end{theorem}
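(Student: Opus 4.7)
The approach is a primal--dual witness construction. On $\Theta_n'$, \ref{A:Penalty1} makes $p_{\blambda_n}$ twice differentiable in $\btheta_{(1)}$, so the reduced problem \eqref{eq:Def_M_reduced} becomes the classical $s_n$-dimensional estimating equation
\begin{align*}
  \tilde{\Phi}_n(\btheta_{(1)}) := \Phi_n((\btheta_{(1)},\bnull))_{(1)} - p'_{\blambda_n}((\btheta_{(1)},\bnull))_{(1)} = \bnull.
\end{align*}
Since \ref{A:Cons1} is assumed for this equation, \cref{theorem1} produces, with probability tending to one, a solution $\hbtheta_{(1)}$ lying in the image of $\Theta_n'$ under the projection onto the first $s_n$ coordinates. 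The rate stated in the theorem matches \cref{theorem1} once one observes that $\tilde{\Phi}_n(\btheta^*_{(1)}) = \Phi_n(\btheta^*)_{(1)} - p'_{\blambda_n}(\btheta^*)_{(1)}$ has Euclidean norm at most $O_p(\sqrt{\tr(I(\btheta^*)_{(1)})/n}) + \sqrt{s_n}\,b_n^*$; this bound propagates through the majorization argument underlying \cref{theorem1}, yielding the stated convergence rate.

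In the second step I verify that $\hbtheta := (\hbtheta_{(1)}, \bnull)$ also solves the full problem \eqref{eq:Def_M}. The block-(1) subgradient conditions hold by construction, and by \ref{A:Penalty2} the block-(2) conditions reduce to showing that $|\Phi_n(\hbtheta)_k| \le \lambda_{n,k}$ for every $k > s_n$ with probability tending to one. Taylor-expanding $\Phi_n$ in both blocks around $\btheta^*$ and substituting the linearized reduced first-order condition for $\hbtheta_{(1)} - \btheta^*_{(1)}$ yields
\begin{align*}
  \Phi_n(\hbtheta)_k = \bigl[\Phi_n(\btheta^*)_k - J(\btheta^*)_{k,(1)} J(\btheta^*)_{(1)}^{-1}\Phi_n(\btheta^*)_{(1)}\bigr] + J(\btheta^*)_{k,(1)} J(\btheta^*)_{(1)}^{-1} p'_{\blambda_n}(\hbtheta)_{(1)} + R_{n,k}.
\end{align*}
Assumption \ref{A:lambda} bounds the middle term by $\alpha\lambda_{n,k}$. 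From the definition of $\eta_n$ in \eqref{eq:DefEta} and a standard truncation--plus--Bernstein concentration activated by \ref{A:phi-moments}, one has $\|\Phi_n(\btheta^*)\|_\infty \le \eta_n$ with probability tending to one, so the bracketed term is at most $(1+J_{n,k})\eta_n \le 2 J_{n,k}\eta_n$. Combining with \ref{A:lambda2} and a uniform bound $R_{n,k} = o_p(\lambda_{n,k})$ yields $|\Phi_n(\hbtheta)_k| \le \tfrac{1+\alpha}{2}\lambda_{n,k} + o_p(\lambda_{n,k}) < \lambda_{n,k}$, completing the selection-consistency check.

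The main obstacle is showing $\max_{k > s_n}|R_{n,k}| = o_p(\eta_n)$. This remainder aggregates three contributions: (i) the stochastic fluctuation $[\Phi_n(\btheta) - \Phi_n(\btheta^*)] - \E[\Phi_n(\btheta) - \Phi_n(\btheta^*)]$ on the $r_n a_n$-neighborhood of $\btheta^*$; (ii) the second-order Taylor remainder of the population quantity $\E[\Phi_n(\btheta)]$; and (iii) the curvature of $p_{\blambda_n}$ in the first block. Piece (i) is handled coordinatewise via a maximal inequality driven by \ref{A:Penalty_emp_pr2}: the variance bound $o(n\eta_n^2/(r_n^2(s_n + \ln p_n)))$ and the envelope $\tilde{B}_n$ are calibrated so that, after multiplication by $\|\hbtheta_{(1)} - \btheta^*_{(1)}\| = O_p(r_n)$, a union bound over $k$ yields a supremum $o_p(\eta_n)$. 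Piece (ii) reduces to $O(r_n \cdot \sup_{\btheta \in \Theta'}\|J(\btheta) - J(\btheta^*)\|)$ and becomes $o_p(\eta_n)$ via the smoothness implicit in the reduced \ref{A:Cons1}. Piece (iii) is $o(r_n) = o_p(\eta_n)$ by \ref{A:Penalty1}. Since \ref{A:lambda2} enforces $\lambda_{n,k} \ge (4/(1-\alpha)) J_{n,k}\eta_n \ge \eta_n$, the bound $R_{n,k} = o_p(\lambda_{n,k})$ follows uniformly in $k$, closing the argument.
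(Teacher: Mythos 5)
Your overall strategy is the same primal--dual witness construction the paper uses: build a solution to the reduced $s_n$-dimensional system via a Theorem~\ref{theorem1}-type argument, then verify the block-$(2)$ subgradient inclusion by decomposing $\Phi_n(\hbtheta)_{(2)}$ and invoking \ref{A:lambda}, \ref{A:lambda2}, and a concentration bound from \ref{A:phi-moments}. Step~1 and pieces~(i) and~(iii) of your remainder analysis all line up with what the paper does (cf.\ its Lemmas~\ref{lem:eta_n}, \ref{lem:emp_process_penalty}, \ref{lem:sparsity}).

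The gap is in piece~(ii). You linearize around $\btheta^*$, so your decomposition evaluates $J$ at $\btheta^*$ and sweeps into $R_{n,k}$ a population-level remainder of size $O\bigl(r_n \cdot \sup_{\btheta \in \Theta'}\|J(\btheta) - J(\btheta^*)\|\bigr)$. You then claim this is $o_p(\eta_n)$ ``via the smoothness implicit in the reduced \ref{A:Cons1}'', but \ref{A:Cons1} contains no such smoothness: it guarantees a matrix majorant $H_n$ with negative-definite average, and says nothing about the modulus of continuity of $\btheta \mapsto J(\btheta)$. A bound like $\sup_{\btheta \in \Theta'}\|J(\btheta) - J(\btheta^*)\| = o(\eta_n/r_n)$ (roughly $o(1/\sqrt{s_n})$) is a genuine second-order smoothness requirement on $\E[\phi]$ -- it is the third condition of \ref{A:Asymp}, which Theorem~\ref{theorem4} deliberately does not assume. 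The paper sidesteps this entirely: its Lemma~\ref{lem:sparsity} uses the exact mean-value form, so $J$ appears at an intermediate point $\bttheta$ on the segment from $\btheta^*$ to $\hbtheta$, with no second-order remainder at all. This is precisely why assumptions \ref{A:lambda} and \ref{A:lambda2} are phrased uniformly over all $\btheta \in \Theta'$ rather than just at $\btheta^*$ -- they are designed to be applied at the unknown intermediate point. To close your argument you should either switch to the mean-value form of the expansion and use the uniform statements of \ref{A:lambda}--\ref{A:lambda2} directly, or else add an explicit smoothness assumption on $J$ (which would make the theorem strictly weaker than what is claimed).

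A secondary, more minor imprecision: in Step~1 you apply Theorem~\ref{theorem1} directly to $\tilde{\Phi}_n = \Phi_n(\cdot)_{(1)} - p'_{\blambda_n}(\cdot)_{(1)}$, but \ref{A:Cons1} is assumed for $\Phi_n(\cdot)_{(1)}$, not for $\tilde{\Phi}_n$. One must argue that the deterministic perturbation $-p'_{\blambda_n}(\cdot)_{(1)}$ preserves the majorization: since \ref{A:Penalty1} gives $\|\nabla^2_{\btheta_{(1)}} p_{\blambda_n}\| = o(1)$ on $\Theta_n'$, the modified envelope $H_n + o(1) I_{s_n}$ still satisfies \ref{A:Cons1}\ref{eq:Hn-identifiable} with the same $c$. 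The paper handles this by re-running the Theorem~\ref{theorem1} argument and bounding the penalty contribution separately, which is cleaner and is worth spelling out.
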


To simplify the discussion, suppose $\tr(I(\btheta^*)_{(1)}) = O(s_n)$, which is the most common situation.
The rate of convergence depends on the number of non-zero coefficients $s_n$ of $\btheta^*$ and the bias $b_n^*$ (defined in \eqref{eq:rTilde}) induced by the penalty. 
For example, we have $b_n^* = \lambda_n$ for the Lasso,  and $b_n^*  = 0$ for SCAD for $n$ large enough if $\min_{1 \leq k \leq s_n} | \theta^*_{k} |  / \lambda_n \to \infty$.
For a Lasso-type penalty with different $\lambda_{n, k}$, i.e., $p_{\blambda_n}(\btheta) = \sum_{k=1}^{p_n} \lambda_{n, k} | \theta_{k} |$, we obtain $b_n^* = \max_{k = 1, \ldots, s_n} \lambda_{n, k}$. 
The rate of convergence does not explicitly involve the overall number of parameters $p_n$, although it may enter through the bias $b_n^*$ (typically logarithmically).  Similarly, the regularity conditions above usually depend on $p_n$ only logarithmically.
This shows that estimation of $\btheta^*$ is possible even if the total number of parameters $p_n$ is much larger than the sample size $n$. 
The non-asymptotic results of \citet{WWpaper} and \citet{LohWW} imply a rate of $O_p(\sqrt{s_n \ln p_n / n})$. For the SCAD and similar penalties, this is still suboptimal as \cref{theorem4} gives the rate $O_p(\sqrt{s_n / n})$.

In practice, we do not know which parameters $\theta_{k}$ are 0, so it is unclear whether the solution from \cref{theorem4} can be found.
Our final theorem gives conditions under which the solution is unique.

\begin{theorem} \label{theorem5}
  Suppose that the conditions of  \cref{theorem3} are satisfied and \ref{A:Penalty3} holds with $\limsup_{n \to \infty} \mu_n \allowbreak < c$, where $c$ is defined in \ref{A:phi-H-penalty}.
  Suppose further that for all $\btheta , \btheta + \bu \in \Theta_n  \cap \{\|\btheta - \btheta^*\| \le a_n  \sqrt{\nu_n}(\eta_n + \bar b_n) \}$ with $a_n \to \infty$ arbitrarily slowly and all $\bx \in \Xcal$, it holds that
  \begin{align} \label{eq:Hn-def-unique-pen}
    \bu^\top [\phi(\bx; \btheta + \bu) - \phi(\bx; \btheta )] \le \bu^\top H_n(\bx) \bu.
  \end{align}
  Then, with probability tending to 1, there is at most one solution in $\Theta_n$.
\end{theorem}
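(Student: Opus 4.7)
The plan is to derive a contradiction from the existence of two distinct solutions by combining the subgradient monotonicity of the penalty (P3) with the strong concavity of the estimating equation encoded by $H_n$, exactly as in the uniqueness argument for \cref{theorem1-uniqueness} but with the extra penalty term controlled by $\mu_n$.

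Concretely, suppose $\hbtheta_1, \hbtheta_2 \in \Theta_n$ are two solutions of \eqref{eq:Def_M} and set $\bu = \hbtheta_1 - \hbtheta_2$. First I would invoke \cref{theorem3}, whose hypotheses are assumed, to conclude that with probability tending to one both estimators lie in the shrinking neighborhood $\Theta_n \cap \{\|\btheta - \btheta^*\| \le a_n \sqrt{\nu_n}(\eta_n + \bar b_n)\}$ for some sequence $a_n \to \infty$. This places $(\hbtheta_2, \hbtheta_2 + \bu)$ in the range where the majorization \eqref{eq:Hn-def-unique-pen} is available.

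The second step is the two-sided sandwich on $\bu^\top[\Phi_n(\hbtheta_1) - \Phi_n(\hbtheta_2)]$. From below: since $\Phi_n(\hbtheta_j) \in \partial p_{\blambda_n}(\hbtheta_j)$, these are valid subgradients at $\hbtheta_1$ and $\hbtheta_2$, so \ref{A:Penalty3} yields
\begin{equation*}
  \bu^\top\bigl[\Phi_n(\hbtheta_1) - \Phi_n(\hbtheta_2)\bigr] \ge -\tfrac{1}{2}\mu_n \|\bu\|^2.
\end{equation*}
From above: applying \eqref{eq:Hn-def-unique-pen} pointwise with $\btheta = \hbtheta_2$ and averaging gives
\begin{equation*}
  \bu^\top\bigl[\Phi_n(\hbtheta_1) - \Phi_n(\hbtheta_2)\bigr] \le \bu^\top \Bigl(\tfrac{1}{n}\sum_{i=1}^n H_n(\bX_i)\Bigr)\bu \le \lambda_{\max}\Bigl(\tfrac{1}{n}\sum_{i=1}^n H_n(\bX_i)\Bigr) \|\bu\|^2.
\end{equation*}
Combining the two bounds yields $(\lambda_{\max}(\bar H_n) + \mu_n/2)\|\bu\|^2 \ge 0$, where $\bar H_n = n^{-1}\sum_i H_n(\bX_i)$.

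The third step is to concentrate $\lambda_{\max}(\bar H_n)$ near $\lambda_{\max}(n^{-1}\sum_i \E[H_n(\bX_i)]) \le -c$. This is exactly the matrix concentration argument already used in the proof of \cref{theorem1}, exploiting the truncation / tail conditions in \ref{A:phi-H-penalty}\ref{eq:Hn-bounds2}; I would simply reuse that step to conclude $\lambda_{\max}(\bar H_n) \le -c + o_p(1)$. Plugging in gives $(c - \mu_n/2 - o_p(1))\|\bu\|^2 \le 0$, and since $\limsup_n \mu_n < 2c$ the prefactor is bounded away from zero with probability tending to one, forcing $\bu = \0$.

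The main obstacle is the concentration of $\lambda_{\max}(\bar H_n)$: one must verify that the same matrix Bernstein bound used under \ref{A:Cons1} transfers to the penalized setting under the weaker \ref{A:phi-H-penalty}, whose moment condition is entrywise rather than in spectral norm. However, since the bound is applied only to control $\bu^\top \bar H_n \bu$ with $\bu$ in the cone $\Theta^*(\nu_n)$ (where $\|\bu\|_1 \le \sqrt{\nu_n}\|\bu\|_2$), one can replace the spectral norm by the $\ell_\infty \to \ell_1$-type bound $|\bu^\top \bar H_n \bu| \le \|\bu\|_1^2 \max_{j,k}|\bar H_{n,j,k}|$ and apply a scalar Bernstein inequality coordinatewise together with a union bound over the $p_n^2$ entries. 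The rate $n/(\nu_n \ln p_n)$ in \ref{A:phi-H-penalty}\ref{eq:Hn-bounds2} is exactly calibrated so that the resulting fluctuation is $o_p(1) \cdot \|\bu\|^2$ on the cone, closing the argument.
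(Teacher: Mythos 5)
Your proof is correct and takes essentially the same route as the paper: you decompose $0$ into the difference $\bu^\top[\Phi_n(\hbtheta_1) - \Phi_n(\hbtheta_2)]$, lower-bound it via \ref{A:Penalty3} using that $\Phi_n(\hbtheta_j)$ are valid subgradients, upper-bound it via \eqref{eq:Hn-def-unique-pen} and the curvature constant $c$, and close with the entrywise concentration plus the cone-based $\|\bu\|_1 \le \sqrt{\nu_n}\|\bu\|_2$ bound, which is exactly what the paper's \cref{lem:Hn-convergence-pen} delivers. You also correctly flag and resolve the only nonobvious point — that \ref{A:phi-H-penalty}\ref{eq:Hn-bounds2} controls entries rather than spectral norm, so the matrix-Bernstein argument of \cref{theorem1-uniqueness} must be replaced by a scalar Bernstein bound with a union bound over $p_n^2$ entries — which is precisely the paper's device.
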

\cref{theorem5} implies the uniqueness of any solution in $\Theta_n$. 
If the conditions of \cref{theorem4} hold, the selection consistent estimator $\hbtheta$ is unique on $\Theta_n$ with probability tending to 1.

\subsubsection{Mutual incoherence \& irrepresentable condition}\label{sec:mutual}

Assumption \ref{A:lambda} generalizes familiar mutual incoherence and irrepresentable conditions to our broader framework.
It requires some $\alpha \in [0,1)$ such that
\[
 \sup_{\btheta \in \Theta'_n} \max_{g \in I_{(2)}} \lambda_{n,g}^{-1} \|  \bar J(\btheta^*, \btheta)_{G_g,(1)} \;\bar J(\btheta^*, \btheta)^{-1}_{(1)} \;  \nabla_{\btheta_{(1)}} p_{\blambda_n}(\btheta) \|_2 \le \alpha.
 \]
For SCAD and MCP, the condition holds with $\alpha=0$ whenever $\min_{1 \leq k \leq s_n} (|\theta_k^*|-\tilde r_n)/\lambda_n\to\infty$; for Group SCAD, the analogous requirement is $\min_{g\in I_{(1)}}(\|\btheta_{G_g}^*\|-\tilde r_n)/\lambda_{n,g}\to\infty$.
For the Lasso and a scalar $\lambda_n$, the terms involving $\lambda_n$ cancel and the condition can be simplified to
\begin{align}
\label{eq:alpha1}
\sup_{\btheta \in \Theta'_n} \left\| \bar J(\btheta^*, \btheta)_{(2,1)} \;\bar J(\btheta^*, \btheta)^{-1}_{(1)} \; \sign(\btheta_{(1)})  \right\|_\infty \le \alpha,  
\end{align}
where $\sign(\btheta)$ denotes the vector of signs.
This condition is implied by the stronger assumption
\begin{equation}\label{eq:alpha_stronger}
  \sup_{\btheta \in \Theta'_n} \lf\|   \bar J(\btheta^*, \btheta)_{(2,1)} \;\bar J(\btheta^*, \btheta)^{-1}_{(1)}  \ri\|_\infty \le \alpha,
\end{equation}
where $\| \cdot \|_\infty$ denotes the maximum absolute row sum of a matrix.
Both conditions characterize the population equation and are independent of a scalar $\lambda_n$.
The normalized Jacobian block $\bar J(\btheta^*,\btheta)_{(2,1)}\bar J(\btheta^*,\btheta)^{-1}_{(1)}$ measures how errors in the active coordinates affect the inactive equations.
\ref{A:lambda} requires this effect to be small enough for the penalty to remove the resulting bias on $\btheta_2$, induced by the biased estimation of $\btheta_1$.
In the linear model, $J(\btheta)_{(2,1)}=\E[\bX_{(2)}\bX_{(1)}^\top]$ and $J(\btheta)_{(1)}=\E[\bX_{(1)}\bX_{(1)}^\top]$, so the condition restricts the dependence between active and inactive covariates.
\eqref{eq:alpha_stronger} is a population version of the mutual incoherence condition of \citet[Section 7.5.1]{WWbook}, which requires approximate orthogonality.
\eqref{eq:alpha1} is the irrepresentable condition of \citet{Zhao} and is necessary for Lasso selection consistency \citep{Zhao,Zou}, but not for SCAD, MCP, and related nonconvex penalties \citep{LohWW2}.
For the Group Lasso, \ref{A:lambda} reduces to the condition of \citet{Bach}.

For heterogeneous tuning parameters, \ref{A:lambda} also describes how the entries of $\blambda_n$ should reflect the sensitivity of the equations.
For example, consider the weighted Lasso $p_{\blambda_n}(\btheta)=\sum_{k=1}^{p_n}\lambda_{n,k}|\theta_k|$.
Assuming $J(\btheta)_{(1)} = I_{s_n}$ and replacing $ \bar J(\btheta^*, \btheta)_{k,(1)}$ by $J(\btheta)_{k,(1)}$ for simplicity, condition \ref{A:lambda} is implied by 
\begin{align*}
  \frac{1}{\lambda_{n, j}} \sum_{k = 1}^{s_n} \lambda_{n, k}  \lf|  \E\lf[ \frac{\partial }{\partial \theta_{k}} \phi(\btheta)_j \ri] \ri| \leq \alpha < 1 \quad \text{uniformly for all }j = s_n + 1, \ldots, p_n.
\end{align*}
Thus, a large effect of active-coordinate errors on equation $j$ requires smaller active penalties or a larger $\lambda_{n,j}$.
Although the unknown support precludes tuning directly by activity, the weights may sometimes be scaled using the magnitudes of $\E[\nabla_{\btheta}\phi(\btheta)_j]$.

\subsection{Asymptotic Normality and the Oracle Property}
\label{sec:normality_pen}

Finally, we establish asymptotic normality of the estimator $\hbtheta_{(1)}$.

\begin{theorem}
\label{theorem6}
Suppose that  \ref{A:Penalty4} holds and that for some matrix $A_n \in \R^{q \times s_n}$ with $\| A_n \| = O(1)$ and for which $\Sigma = \lim_{n \to \infty} A_n  I(\btheta^*)_{(1)}  A_n^\top$ exists, the reduced problem \eqref{eq:Def_M_reduced} satisfies the conditions of \cref{theorem2} with $r_n$ replaced by $\tilde r_n $ as defined in \eqref{eq:rTilde}.
Then, the estimator $\hbtheta_{(1)}$ in \cref{theorem4} satisfies
\[
\sqrt{n} A_n \left[J(\btheta^*)_{(1)}(\hbtheta_{(1)} - \btheta^*_{(1)}) - \nabla_{\btheta_{(1)}} p_{\blambda_n}(\btheta^*) \right] \to_d \Ncal(\bnull, \Sigma).
\]
\end{theorem}

Recall that $\hbtheta$ has the oracle property if it behaves like the hypothetical \emph{oracle estimator} that knows $\btheta^*_{(2)} = \0$ in advance. This is the case if  $\hbtheta_{(2)} = \0$ and $\nabla_{\btheta_{(1)}} p_{\blambda_n}(\btheta^*) = o(1/\sqrt{n})$. 
For SCAD and MCP with $\lambda_n \to 0$ fast enough, one obtains $b_n^* = 0$, so the rate of convergence is the same as if $\btheta^*_{(2)} = \0$ is known in advance.
Since $\nabla_{\btheta_{(1)}} p_{\blambda_n}(\btheta^*) = \0$, the SCAD and MCP-penalized estimators have  the same efficiency as the oracle estimator.
As long as $\blambda_n$ is small enough that $\nabla_{\btheta_{(1)}} p_{\blambda_n}(\btheta^*) = \0$ asymptotically, it can be chosen large enough such that \ref{A:Penalty_emp_pr2} is fulfilled.  

For the Lasso, we have $b_n^* = \lambda_n$ and $\nabla_{\btheta_{(1)}} p_{\blambda_n}(\btheta^*) = O(\sqrt{s_n} \lambda_n)$. 
For the oracle property to hold, we would need $\lambda_n = o(1/\sqrt{s_n n})$.
This condition cannot be satisfied, because  \ref{A:lambda}  requires $\lambda_n  \ge \eta_n  \approx \sqrt{\ln p_n / n}.$
This is in line with the results from \citet{Zou}, who shows that in the linear model, the Lasso is only variable selection consistent at the cost of a slower rate of convergence.
 
\section{Extension to dependent data}\label{sec:dependence}

Our results were stated for independent data mainly to simplify the discussion of already
technical conditions and reduce the proofs to the core ideas. The arguments and results can
be extended to various types of dependent data, such as time series, spatial data, or graph
dependence. 

To set the stage, suppose we observe a sequence of dependent random variables
$\bZ_1,\dots,\bZ_N$ and we are interested in the solution $\hbtheta$
solving the estimating equation
$
   \sum_{j=1}^N \varphi(\bZ_j;\btheta)=\bnull.
$
Let $m_N\in\N$ be a divisor of $N$, put $n_N=N/m_N$, and let
$I_{1,N},\dots,I_{n_N,N}$ be a partition of $\{1,\dots,N\}$ with
$|I_{i,N}|=m_N$. For time series one may take
$I_{i,N}=\{(i-1)m_N+1,\dots,im_N\}$. Define
\[
 \textstyle \bX_i=(\bZ_j:j\in I_{i,N}),
  \qquad
  \phi(\bX_i;\btheta)
  =
  \frac 1 {m_N}  \sum_{j\in I_{i,N}}\varphi(\bZ_j;\btheta),
  \qquad
  i=1,\dots,n_N.
\]
Then the estimating equation can be rewritten as
$
  \frac 1{n_N}\sum_{i=1}^{n_N} \phi(\bX_i;\btheta)
  =
  \bnull.
$

If the blocks
$\bX_1,\dots,\bX_{n_N}$ are independent, we can apply the
results from the previous sections to the block-wise estimating equation.
While the block-wise independence assumption is strong, it serves as a useful starting point
for understanding how to extend our results to dependent data. To comply with the high-level
view of this paper, we will now state an abstract dependence condition that applies to a wide
range of dependence scenarios and slots nicely into our arguments for the independent case.
Blocking and coupling arguments of this type are standard in asymptotic theory for weakly dependent processes \citep[see, e.g.,][]{rio2017asymptotic}.

For sub-$\sigma$-fields $\mathcal A$ and $\mathcal C$, define the beta-mixing
coefficient by
\[
  \textstyle\beta(\mathcal A,\mathcal C)
  =
  \frac12
  \sup_{\{A_i\},\{C_j\}}
  \sum_{i,j}
  \left|\Pr(A_i\cap C_j)-\Pr(A_i)\Pr(C_j)\right|,
\]
where the supremum is over all finite partitions of the sample space with
$A_i\in\mathcal A$ and $C_j\in\mathcal C$.

\begin{enumerate}[label=(D)] 
  \item  \label{ass:dep} There exist $m_N = o(N), L_N$ and a partition $I_{1,N},\dots,I_{n_N,N}$ of $\{1,\dots,N\}$ with $n_N \coloneqq N/m_N$ and $|I_{i,N}|=m_N$, such that: 
  \begin{enumerate}[label=(\roman*)]
    \item $L_N / n_N \to1$,
    \item $L_N \max_{1\le i \le L_N}
    \beta\!\left(
      \sigma\{\bZ_j: j \in I_{i,N}\},
      \sigma\{\bZ_j: j \in\bigcup_{ \ell \le L_N, \ell\ne i} I_{\ell,N} \}
    \right) \to 0$.
  \end{enumerate}
\end{enumerate}
Intuitively, the condition demands that, if we drop blocks $I_{k,N}$ for $k>L_N$, the dependence between the remaining blocks is weak. Part (i) requires that we only drop a negligible fraction of blocks. Part (ii) ensures that the dependence fades sufficiently quickly such that a standard coupling argument allows replacing the retained blocks by independent versions with high probability.

For the omitted blocks to have truly negligible effect, we require an additional condition that depends on the context. The argument is summarized in the following lemma, whose proof is given in the supplement.

\begin{lemma}\label{lem:dep}
 Suppose that condition \ref{ass:dep} holds, and let $\Fcal$ be a collection of functions. 
 If 
 \begin{align} \label{cond:envelope-dropped}
  \sum_{i = L_N + 1}^{n_N} \Pr\left(\sup_{f \in \Fcal} |f(\bX_i)| > \frac{\eps_N}{n_N - L_N} \right) = o(1),
 \end{align}
 holds for a given $\eps_N$, then there are independent random variables $\bX_1^*, \dots, \bX_{n_N}^*$ with $\bX_i^* =_d \bX_i$, $i=1,\dots,n_N$, such that
 \begin{align*}
   \sup_{f \in \Fcal} \left| \frac{1}{n_N}\sum_{i=1}^{n_N} f(\bX_i) -  \frac{1}{n_N}\sum_{i=1}^{n_N}f(\bX_i^*) \right| = o_p(\eps_N / n_N).
 \end{align*}
\end{lemma}

Adapting the proofs of our main results to the dependent case now amounts to invoking \cref{lem:dep} with the appropriate function class $\Fcal$ and choice of threshold $\eps_N$ before applying any concentration or limit theorem. 
Conveniently, the assumptions of our main results already involve threshold sequences that control \eqref{cond:envelope-dropped} for the relevant quantities.
The following theorem summarizes the conditions under which our main results can be extended to the dependent case.

\begin{theorem}\label{th:dependence}
Consider the above setting and suppose that \ref{ass:dep} holds. Let $n:=n_N$ and $B_n, \tilde B_n, D_n,  r_n, \tilde r_n$ denote the corresponding quantities from \ref{A:Cons1}--\ref{A:Penalty_emp_pr2}. 
We require additional conditions of the form 
\begin{equation}\label{eq:assumpDep}
   \sum_{i=L_N + 1}^{n} \Pr\left( T_n > E_n \right) = o(1).
\end{equation}
\begin{itemize}
  \item The conclusions of Theorems \ref{theorem1} and \ref{theorem1-uniqueness} remain true provided $B_n = O(n/(n-L_N))$ and \eqref{eq:assumpDep} holds with $T_n = \| \phi_i(\btheta^*) \|_\infty, E_n = r_n n /(\sqrt{p_n} (n-L_N))$.
  \item The conclusion of \cref{theorem2} remains true provided $D_n = O(\sqrt{n}/(n - L_N))$, $(n - L_N)^5/n = O(1)$.
\item  The conclusions of Theorems \ref{theorem3} and \ref{theorem5} remain true provided $\tilde B_n = O(n / (\nu_n (n- L_N)))$ and \eqref{eq:assumpDep} holds with $T_n = \| \phi_i(\btheta^*) \|_\infty, E_n = \eta_n n / (n-L_N)$.
  \item The conclusion of \cref{theorem4} remains true provided the reduced problem \eqref{eq:Def_M_reduced} fulfills the adapted conditions of \cref{theorem1} and \eqref{eq:assumpDep} holds with $T_n = \sup_{\btheta \in \Theta'_n} \| \phi_i(\btheta)_{(2)} \|_{\infty} , E_n =  \eta_n n / (n-L_N)$.
  \item The conclusion of \cref{theorem6} remains true provided the reduced problem \eqref{eq:Def_M_reduced} fulfills the adapted conditions of \cref{theorem2} with $r_n$ replaced by $\tilde r_n$.
\end{itemize}
\end{theorem}
The additional assumption \eqref{eq:assumpDep} implicitly imposes restrictions on $p_n$ and $n - L_N$. 
For example, if $r_n = \sqrt{p_n / n}$, Markov's inequality and the union bound imply \eqref{eq:assumpDep} with $T_n = \| \phi_i(\btheta^*) \|_\infty, E_n = r_n n /(\sqrt{p_n} (n-L_N))$ provided ${(n - L_N)^5 p_n \max_{i, k} \E[ \phi_i(\btheta^*)_k^4] /n^2 = o(1)}$.
The blocking construction affects the covariance matrix appearing in the asymptotic normality statements:
$$I_n(\btheta^*) = \frac{1}{n} \sum_{i=1}^n \cov(\phi(\bX_i;\btheta^*)) = \frac{1}{N m_N} \sum_{i = 1}^n \sum_{j, k \in I_{i, N}} \cov(\varphi(\bZ_j; \btheta^*), \varphi(\bZ_k; \btheta^*))$$
Although $I_n(\btheta^*)$ depends on the block assignments $I_{i, N}$ in \ref{ass:dep}, in most applications $m_N I_n(\btheta^*)$ can be replaced asymptotically by the covariance matrix $\wt I_N(\btheta^*) = N^{-1} \sum_{i=1}^N \sum_{j=1}^N \cov(\varphi(\bZ_i; \btheta^*), \varphi(\bZ_j; \btheta^*))$ of the original estimating equation.
A simple sufficient condition is that the covariance between retained blocks decays sufficiently quickly, e.g., 
\begin{align} \label{eq:cov-decay}
      \max_{1 \le i < j \le L_N} \| \cov(A_n \phi(\bX_i; \btheta^*), A_n \phi(\bX_j; \btheta^*)) \| = o(1/n),
\end{align}
and that $(n - L_N)^2 / n = o(1)$, which ensures that the contribution of the dropped blocks is negligible.

Let us now discuss how to verify the abstract condition \ref{ass:dep} in a few more concrete settings. 

\begin{example}[$\beta$-mixing time series]\label{ex:timeseries}
Let $(\bZ_t)_{t\in\mathbb Z}$ be a polynomially $\beta$-mixing time series, i.e., $\beta_{\rm ts}(q) = \sup_{t\in\mathbb Z}\beta\left(\sigma(\bZ_s:s\le t), \sigma(\bZ_s:s\ge t+q)\right) \lesssim q^{-\kappa}$.
Take
$m_N=\lfloor N^\gamma\rfloor$ and $q_N=\lfloor N^\delta\rfloor$ with
$0<\delta<\gamma<1$ and $\gamma+\kappa\delta>1$.
Define
$ L_N=\lfloor N / (m_N+q_N) \rfloor$
and $I_{k,N}
  =
  \{(k-1)(m_N+q_N)+1,\ldots,(k-1)(m_N+q_N)+m_N\}$ for $k=1,\ldots,L_N$.
This takes blocks of size $m_N$ and leaves gaps of size $q_N$ in between.
The omitted observations are partitioned arbitrarily into sets
$I_{L_N+1,N},\ldots,I_{n_N,N}$ of cardinality $m_N$.
Then $L_N/n_N \sim m_N/(m_N+q_N) \to 1$, and
\begin{align*}
    L_N
  \max_{1\le k\le L_N}
  \beta\!\left(
    \sigma\{\bZ_i:i\in I_{k,N}\},
    \sigma\{\bZ_i:i\in \cup_{\ell\le L_N,\ell\ne k} I_{\ell,N}\}
  \right)
  \lesssim
  \frac{N}{m_N}q_N^{-\kappa}
  \lesssim N^{1- \gamma} N^{-\kappa \delta}
  \to0 .
\end{align*}
Thus condition \ref{ass:dep} holds.
\cref{cor:TS} in \cref{sec:MEstGen} derives consistency and asymptotic normality of M-estimators in generalized linear models in this setting.
\end{example}
 
A similar construction works for a spatial grid, where retained blocks are formed from spatial hyper-cubes with gaps. A more interesting example arises when dependence is induced by a network.
Let $G = (V, E)$ be a graph with nodes $V=\{1, \ldots, N\}$ and edges $E$, and define the graph distance as $d_G(i,j) = \min\{k : \text{there is a path from }i\text{ to }j\text{ using }k\text{ edges}\}$.
Suppose that the observed data $\bZ_1, \ldots, \bZ_N$ are generated from a process such that the dependence between $\bZ_i$ and $\bZ_j$ decays as the graph distance $d_G(i,j)$ increases.
In a subcritical Erdős--Rényi graph, for instance, the largest connected component is of size $O(\log N)$, and different components are independent. In this case, we can construct blocks of size $m_N \gg \log N$ from connected components, and the retained blocks are independent. Such graphs are not realistic models for many applications in, e.g., social or transport networks, where the largest connected component is often of order $N$, and some nodes (hubs) have very high degree. The following examples illustrate how to handle such cases.

\begin{example}[Hub-screened network dependence]\label{ex:network}
Let $G_N=(V_N,E_N)$ be a graph with $V_N=\{1,\ldots,N\}$ and graph
distance $d_N$. For $A,B\subset V_N$, write
  $d_N(A,B)=\min_{i\in A,j\in B}d_N(i,j),$
and suppose the following bound for the graph-distance beta-mixing coefficient
\begin{align*}
  \beta_{{\rm net},N}(q)
  =
  \textstyle \sup_{A,B\subset V_N, d_N(A,B)\ge q}
  \beta\!\left(
    \sigma\{\bZ_i:i\in A\},
    \sigma\{\bZ_j:j\in B\}
  \right) \lesssim \beta(q)
\end{align*}
with some $\beta(q) \to 0$ for $q \to \infty$.
Suppose that there exist hub sets $H_N\subset V_N$ and radii
$q_N\to\infty$ such that, with $U_N=\{i\in V_N:d_N(i,H_N)\le q_N\}$, and with $\mathcal C_N$ denoting the connected components of
$G_N[V_N\setminus H_N]$ (the graph obtained by removing the hub nodes and their incident edges), $|U_N|=o(N)$ and $\max_{C\in\mathcal C_N}|C\setminus U_N|=o(N)$.
Then, condition \ref{ass:dep} holds.
The proof is given in the supplement.
\end{example}
 
\section{Applications}

\label{sec:Exmp}
Our general results are stated under rather abstract regularity conditions to keep them widely applicable.
In the following, we provide several examples of how these conditions simplify in specific applications.
While some of these examples can be analyzed using less sophisticated approaches tailored to the specific problem, this section illustrates the wide applicability of our framework.

\subsection{\emph{M}-Estimation}
\label{sec:MEst}

\subsubsection{Generalized Linear Models}
\label{sec:MEstGen}

Let $(Y_1, \bX_1), \dots, (Y_n, \bX_n) \in \R^{1 + p_n}$ be \emph{iid} and consider an estimator $\hbtheta$ that satisfies
\begin{equation}
\label{eq:MEstGLM}
\sumin \phi_{i}(\hbtheta) =  \sumin \psi( Y_{i}, \bX_{i}^\top \hbtheta) \bX_{i} = \0
\end{equation}
with some function $\psi$.
This includes the least squares estimator and other $M$-estimators in the linear model studied by \citet{Huber73,  PortnoyCons, PortnoyAsymp} and \citet{Mammen89} as well as likelihood inference in generalized linear models as special cases.
If the function $\psi$ is differentiable with nonpositive derivative, the matrix $H_n$ in \ref{A:Cons1}  can be constructed as $H_n(\bx) = - \inf_{\btheta \in \Theta_n}|\psi'(Y_{i}, \bx^\top \btheta)| \bx  \bx^\top$, where $\psi'(Y_{i}, \eta) = \frac{\partial}{\partial \eta}\psi(Y_{i}, \eta)$.

\begin{corollary} \label[corollary]{cor:Mest} 
Let $\hbtheta$ solve \eqref{eq:MEstGLM} and let $\psi'$ be nonpositive, Lipschitz in $\eta$ and uniformly bounded.
Suppose that $\max_k \E[\phi_{i}(\btheta^*)_k^4] = O(1)$ and, with some $k \ge 2$, for all $\ba \in \R^{p_n}$,
\begin{align} \label{eq:design-cond}
  \begin{aligned}
    &\|\E[\bX \bX^\top]\| = O(1), \quad \E[|\ba^\top \bX|^{2k}] = O(\|\ba\|^{2k}) , \quad \E[\rho(|\| \bX \| - \E[\| \bX \|]|^2)]= O(1)
  \end{aligned}
\end{align}
with some increasing continuous function $\rho \colon (0, \infty) \to (0, \infty)$ such that $\rho(x)/x$ is increasing.
Suppose further that there are $c> 0$ and sets $\Theta_n\supset \{ \btheta : \| \btheta - \btheta^* \| \le r_n C \}$ for all $C < \infty$ and $n$ large,
\begin{align} \label{eq:Mest-eigcond}
  \lambda_{\min}\lf(\E\lf[\inf_{\btheta \in \Theta_n}|\psi'(Y_{i}, \bX^\top \btheta)| \bX  \bX^\top\ri] \ri) \ge c.
\end{align}
\begin{enumerate}[label=(\roman*)]
  \item If  $p_n \ln p_n / n \to 0$ and $\rho^{-1}(n) \ln p_n / n \to 0$, $\hbtheta$ is a $\sqrt{n/p_n}$-consistent estimator of $\btheta^*$ and unique on $\Theta_n$ with probability tending to 1.
  \item  If the matrix $A_n$ in \cref{theorem2} satisfies $\| A_n \| = O(1)$, $p_n^2/n \to 0$ and $p_n^{3k}/n^{(2k - 2)} \to 0$,
  $\hbtheta$ is asymptotically normal. 
\end{enumerate}
\end{corollary}
The design conditions \eqref{eq:design-cond} are relatively mild.
The second requires a weak form of isotropy and the third requires some form of concentration. 
All conditions are easily satisfied for, e.g., independent, uniformly sub-Gaussian variables, for which $\rho(x) = \exp(c x)-1$ with some sufficiently small $c>0$ and arbitrarily large $k$ work, and the growth bound for asymptotic normality becomes $p_n^2/ n \to 0$.
In contrast to the results of \citet{PortnoyAsymp}, the corollary also applies to covariates with heavier tails. For example, taking $k = 2$ (which gives a fourth moment condition), the estimator is asymptotically normal as long as $p_n^3 / n  \to 0$. 
For the least squares estimator, $\psi'$ is constant, so the eigenvalue condition \eqref{eq:Mest-eigcond} simplifies to $\lambda_{\min}\lf(\E\lf[\bX  \bX^\top\ri] \ri) \ge c$.
This simplification applies  generally if $\psi'$ is uniformly bounded away from zero.

To provide an example of the extension of our results to dependent data, we revisit the time series setting from \cref{ex:timeseries}:
Let $(\bZ_1, Y_1), \ldots, (\bZ_N, Y_N) \in \R^{p_n + 1}$ be a stationary, polynomially $\beta$-mixing time series and consider an estimator $\hbtheta$ that satisfies $\sumin \phi_i(\hbtheta) = \bnull$ with
\begin{equation}\label{eq:phiTS}
   \phi_i(\btheta) \coloneqq \frac{1}{m_N} \sum_{j \in I_{i,N}} \varphi(Y_j, \bZ_j; \btheta), \quad \varphi(Y_j, \bZ_j; \btheta) \coloneq \psi(Y_j, \bZ_j^\top \btheta)\bZ_j.
\end{equation}

\begin{corollary}\label{cor:TS}
  Let $\hbtheta$ solve $\sumin \phi_i(\hbtheta) = \bnull$ with $\phi_i(\btheta)$ as defined in \eqref{eq:phiTS} and let $\psi'(Y_j, \eta) = \frac{\partial}{\partial \eta}\psi(Y_j, \eta)$ be nonpositive, Lipschitz in $\eta$ and uniformly bounded.
  Suppose that $\max_k \E[| \varphi(Y_j, \bZ_j; \btheta^*)_k|^{4 + \eps}] = O(1)$ with some $\eps > 0$ and $\kappa > 2(4 + \eps )/\eps$ ($\kappa$ is defined in \cref{ex:timeseries}), and \eqref{eq:Mest-eigcond} and the first two conditions in \eqref{eq:design-cond} hold with $\bX$ replaced by $\bZ$.

  For consistency and uniqueness, let $b_n$ be a sequence such that
  \[
    p_n / b_n = O(1), \qquad \limsup_{N \to \infty} \max_{1 \le j \le N} \| \bZ_j \|^2 / b_n \le 1/2 \quad \text{almost surely}.
  \]
  If $b_n/N^{(1 - 1/(1 + \kappa) - \tilde \eps)} \to 0$ with $\tilde \eps > 0$ arbitrarily close to 0, $\hbtheta$ is a $\sqrt{N/p_n}$-consistent estimator of $\btheta^*$ and unique on $\Theta_n$ with probability tending to 1.

  For asymptotic normality, let $A_n$ be a matrix such that $\| A_n \|_\infty = O(1)$.
  Suppose that $\Sigma = \lim_{n \to \infty} m_N A_n I(\btheta^*) A_n^T$ exists.
  Suppose that $\kappa > 5$ and, for some $\tau > 4$ and all $\ba \in \R^{p_n}$,
  \[
    \E[| \ba^\top \bZ |^{\tau}] = O(\| \ba \|^{\tau}), \qquad \kappa > 3 \tau/(\tau - 4).
  \]
  If $p_n^{3r}/N^{(1.5 r - 1/2)}  = o(1)$ with $r < \kappa \tau /(\tau + 4 \kappa)$, then $\sqrt{N} A_n J(\btheta^*) (\hbtheta - \btheta^*) \to_d \Ncal(\bnull, \Sigma)$.
\end{corollary}
We illustrate the conditions with two limiting cases.
For consistency, the moment and mixing requirements trade off against one another: if the moment condition holds for arbitrarily large $\eps$, we only require $\kappa > 2$, whereas $\kappa \to \infty$ permits $\eps$ to be arbitrarily close to zero.
If the entries of $\bZ$ are independent and sub-Gaussian and $p_n$ grows at least polynomially, sub-Gaussian concentration and the Borel--Cantelli lemma yield $b_n \lesssim p_n$.
As $\kappa$ increases, the resulting dimensional restriction approaches $p_n/N^{1 - \delta} \to 0$ for arbitrarily small $\delta > 0$.

For asymptotic normality, if $\kappa \to \infty$, bounded $(4 + \delta)$th moments for arbitrarily small $\delta > 0$ and $p_n^3/N \to 0$ are sufficient, in line with \cref{cor:Mest}.
Conversely, if the moment bound for $\ba^\top \bZ$ holds for arbitrarily large $\tau$, the restriction on $r$ reduces to $r < \kappa$.
Thus, stronger mixing moves the dimensional restriction towards the nearly optimal condition $p_n^2/N \to 0$ from \cref{cor:Mest}.

\subsubsection{Penalization in GLMs}

We now return to the \emph{iid} setting and consider the Lasso penalty $p_{\lambda_n}(\btheta) = \lambda_n \| \btheta \|_1$.
\begin{corollary} \label[corollary]{cor:MestP} 
Suppose the reduced problem \eqref{eq:Def_M_reduced} satisfies the conditions from \cref{cor:Mest} with some $k$ and some function $\rho$ and  
\begin{align} \label{eq:design-cond-pen}
 \begin{aligned}
  \max_{1 \le k \le p_n} \E[\rho(X_{k}^2\psi(Y, \bX^\top \btheta^*)^2)] &= O(1), \quad \max_{1 \le  k \le p_n}\E[\rho( X_{k}^2)] = O(1).
 \end{aligned} 
\end{align}
Denote $\bar  \psi (Y, \bX, \btheta)= \int_0^1 \psi'(Y, \bX^\top [\btheta^* + t(\btheta - \btheta^*)]) dt$ and let
\begin{align} \label{eq:Mest-mutual-incoherence}
\sup_{\btheta \in \Theta'_n}  \lf\|  \E\left[\bar  \psi (Y, \bX, \btheta) \bX_{(2)} \bX_{(1)}^\top \right] \, \E\left[\bar  \psi (Y, \bX, \btheta)  \bX_{(1)} \bX_{(1)}^\top \right]^{-1} \ri\|_{\infty} \le \alpha
\end{align}
for some $\alpha \in [0, 1)$, suppose that $\sigma^2 = \max_{1 \le k \le p_n}\E[ \phi_{i}(\btheta^*)_k^2]$ is bounded away from zero and infinity, $\min_{k : \theta^*_k \neq 0 } | \theta_{k}^*|/ \tilde r_n \to \infty$ with $\tilde r_n = \sqrt{s_n \ln p_n / n}$ and $\lambda_n \ge 8 (1 - \alpha)^{-1} \sqrt{\sigma^2 \ln p_n/n}$ and $\lambda_n = O(\sqrt{\ln p_n / n})$.
Then, if 
\begin{align} \label{eq:MestP-dim-cond}
  s_n \ln p_n = o\lf(\sqrt{n}\ri), \qquad p_n = o\lf(\frac{\rho(n / (s_n \ln p_n)^2 )}{n}\ri), \qquad \frac{s_n^{3k} (\ln p_n)^k}{n^{2k - 2}} = o(1),
\end{align}
the Lasso-penalized equation \eqref{eq:MEstGLM} has a solution $\hbtheta$ with $\hbtheta_{(2)} = \0$ with probability tending to 1, $\| \hbtheta - \btheta^* \| = O_p(\sqrt{s_n \ln p_n/n})$, and $\hbtheta_{(1)}$ is asymptotically normal (with $\| A_n \| = O(1)$).

If $|\psi'|$ is uniformly bounded away from $0$ and $\lambda_{\min}\lf(\E\lf[\bX  \bX^\top\ri] \ri) \ge c$ for some $c > 0$,
this solution is unique with probability tending to 1.
\end{corollary}
The moment conditions in \eqref{eq:design-cond} and \eqref{eq:design-cond-pen} constrain the growth of $p_n$ through the function $\rho$. Assuming $s_n = O(1)$ for simplicity, the choice $\rho(x) = \exp(c x)-1$ for some $c>0$ allows $p_n \sim \exp(n^{1/3 - \eps})$ for any $\eps > 0$. Similarly, polynomial moment bounds translate into polynomial growth conditions on $p_n$. For example, the choice $\rho(x) = x^3$ requires sixth moments and allows $p_n \sim n^{2 - \eps}$.

If $\btheta$ can be partitioned into $K_n$ groups $G_1, \ldots, G_{K_n} \subseteq \{ 1, \ldots, p_n\}$, we can apply a group-structured penalty such as the Group SCAD $p_{\lambda_n} = \sum_{g=1}^{K_n} p_{\lambda_n}(\| \btheta_{G_g} \|_2)$, where $p_{\lambda_n}(\theta)$ denotes the one-dimensional SCAD penalty.
This contains the SCAD penalty $p_{\lambda_n} = \sum_{k=1}^{p_n} p_{\lambda_n}(|\theta_k|)$ as a special case.

\begin{corollary} \label[corollary]{cor:MestSCAD} 
  Suppose the reduced problem \eqref{eq:Def_M_reduced} satisfies the conditions from \cref{cor:Mest} with some $k$ and some function $\rho$ and \eqref{eq:design-cond-pen} holds and let 
  \begin{align} \label{eq:Mest-mutual-incoherence-SCAD}
\alpha_n \coloneq \max_{g \in I_{(2)}} \sup_{\btheta \in \Theta'_n, \| \bx \|_\infty \le 1}  \lf\|  \E\Big[\bar  \psi (Y, \bX, \btheta) \bX_{G_g} \bX_{(1)}^\top \Big] \, \E\Big[\bar  \psi (Y, \bX, \btheta)  \bX_{(1)} \bX_{(1)}^\top \Big]^{-1} \bx \ri\|_2 .
\end{align}
Suppose that $\sigma^2 = \max_{1 \le k \le p_n}\E[ \phi_{i}(\btheta^*)_k^2]$ is bounded away from zero and infinity, $(\min_{g \in I_{(1)}} \| \btheta^*_{G_g} \|_2 - \sqrt{s_n / n})/\lambda_n \to \infty$, $\sqrt{n/s_n} \min_{g \in I_{(1)}} \| \btheta^*_{G_g} \|_2 \to \infty$ and $\lambda_n \ge 4 \eta_n \max\left\{  \sqrt{| G_1 |}, \ldots , \sqrt{| G_{K_n} |}, \alpha_n \right\}$ with some $\eta_n \ge \sqrt{4 \sigma_n^2 \ln p_n / n}$.
Then, if $s_n = o(\sqrt{n})$, $s_n^{3k}/n^{(2k - 2)} = o(1)$, and
\[
 \frac{s_n(s_n + \ln p_n)}{n^2 \eta_n^2} = o(1), \quad  p_n = o\left( \frac{\max\{ \rho(n/(s_n \ln p_n)) , \ \rho(n^{3/2} \eta_n/(s_n^2 + s_n \ln p_n))\}}{n} \right),
\]
the Group SCAD-penalized equation has a solution $\hbtheta$ with $\hbtheta_{(2)} = \0$ with probability tending to 1, $\| \hbtheta - \btheta^* \| = O_p(\sqrt{s_n/n} )$, and $\hbtheta_{(1)}$ is asymptotically normal (with $\| A_n \| = O(1)$) and as efficient as the oracle solution.
If $\lambda_{\min}(\E[\inf_{\btheta \in \Theta_n} | \psi'(Y, \bX^\top \btheta)| \bX  \bX^\top]) \ge c$ for some $c > 0$ and the Group SCAD penalty is used with $a > 1 + \frac 1 c$, this solution is unique with probability tending to 1.
\end{corollary}
Compared to the Lasso, we do not require the mutual incoherence condition \eqref{eq:Mest-mutual-incoherence}, and $\alpha_n$ in \eqref{eq:Mest-mutual-incoherence-SCAD} may even diverge.
To ensure uniqueness, the non-convexity parameter $a$ of SCAD must be chosen appropriately.

The four corollaries also apply to non-parametric regression problems, in which $\bX_{i}$ consists of appropriate basis functions.
For example, \cref{cor:MestP} implies consistency of Lasso-assisted variable selection in high-dimensional nonparametric additive models \citep{huang2010} under appropriate conditions.

\subsection{A problem where the classical RSC condition fails}\label{sec:RSC-failure}

We now give a simple example where the RSC condition \eqref{eq:RSC-LW} fails, while our weaker condition remains easy to verify.
Consider an allocation problem with linear payoffs and quadratic cost of the form $\max_{\btheta} \bmu^\top \btheta - \frac 1 2 \btheta^\top C \btheta$, where $\bmu \in \R^{p_n}$ is a vector of expected payoffs and $C \in \R^{p_n \times p_n}$ is a symmetric, positive definite cost matrix.
Such problems arise, for example, in portfolio optimization, where $\bmu$ contains expected asset returns and $C$ reflects market risk and transaction or liquidity costs, or in marketing budget allocation, where $\bmu$ contains expected returns of marketing channels and $C$ captures diminishing returns and cross-channel effects such as cannibalization.

For the counterexample, we consider centered payoffs and set $C=I_{p_n}$, so the population solution is $\btheta^*=\0$.
Let $m_n = \lceil\sqrt n\rceil$, $M_n = \lfloor p_n/m_n\rfloor$, and let $S_1,\ldots,S_{M_n}$ be disjoint subsets of $\{1, \ldots, p_n\}$ of size $m_n$.
For $S\subseteq\{1,\ldots,p_n\}$, let $\bm 1_S=(\ind\{j\in S\})_{j=1}^{p_n}\in\R^{p_n}$ denote its indicator vector.
Suppose that $\bY_i$ has independent standard Gaussian entries and define the symmetric random cost matrices
$X_i = I_{p_n} + \sum_{g=1}^{M_n}\xi_{i,g} \bm 1_{S_g}\bm 1_{S_g}^\top$,
  where $\xi_{i,g}\stackrel{\mathrm{iid}}{\sim}\Ncal(0,1),$
and $\xi_{i,g}$ and $\bY_i$ are mutually independent.
Thus, $\E[X_i]=I_{p_n}$, but the entries within each block $S_g\times S_g$ are subject to a common random shock.
Consider the compact parameter set $\Theta_n = \{\btheta\in\R^{p_n}\colon \|\btheta\|_2 \le 1\}$ and the penalized problem
\begin{align} \label{eq:RSC-failure-model}
  \hbtheta \in \argmax_{\btheta\in\Theta_n}  \left\{\frac{1}{n}\sumin \bY_i^\top\btheta - \frac 1 2 \btheta^\top\left(\frac1n\sumin X_i\right)\btheta - \lambda_n\|\btheta\|_1\right\}.
\end{align}
We consider the associated penalized estimating equation
\begin{align} \label{eq:RSC-failure-equation}
  \Phi_n(\btheta)  =\frac 1 n \sumin (\bY_i - X_i\btheta) \in \lambda_n \partial \|\btheta\|_1.
\end{align}
The random part of each sample Hessian has rank $M_n$, which diverges under the conditions below.

\begin{corollary}
  \label{cor:RSC-failure}
  Consider the setting above with $p_n\ge n$, $\eta_n=\sqrt{\ln p_n/n}$, $\ln p_n = o(\sqrt n)$, and $\lambda_n = C_\lambda\eta_n$ for a sufficiently large constant $C_\lambda$.
  Let $1\le\nu_n = O((n/\ln p_n)^{1/2-\delta})$ for some $\delta\in(0, 1/2]$.
  Then, with probability tending to one, $\0$ is the unique solution to \eqref{eq:RSC-failure-equation} on $\Theta^*(\nu_n).$
\end{corollary}
On the other hand, the RSC condition \eqref{eq:RSC-LW} can only hold with vanishing probability.
\begin{proposition} \label[proposition]{lem:RSC-failure}
  Let $\eta_n = \sqrt{\ln p_n/n}$, $p_n \ge n$, and $\ln p_n = o(\sqrt n)$.
  Then, for any $c, c_1 > 0$, the event
  \begin{align*}
    \left\langle \bu, \Phi_n(\0) - \Phi_n(\bu) \right\rangle
    \ge c\|\bu\|_2^2 - c_1 \|\bu\|_1^2 \eta_n^2,
    \qquad \forall \|\bu\|_1 \le 1,
  \end{align*}
  has probability tending to zero.
\end{proposition}
Consequently, the theory of, e.g., \citet{WWpaper, LohWW, WWbook} does not apply.
The problematic directions have effective sparsity $m_n \asymp \sqrt n \gg  \nu_n$ and thus lie outside the cones covered by \cref{cor:RSC-failure}, showing that the rank-one structure exploited in proofs of \eqref{eq:RSC-LW} is consequential.
 \subsection{Multi-Sample Estimation}
\label{sec:MSample}

Consider a multi-sample estimation problem: 
The data are given by $(k_{1}, \bX_{1}), \dots, (k_{n}, \bX_{n})$, where $k_{i} \in \{ 1, \ldots, K_n\}$ indicates to which of the $K_n$ samples $\bX_{i}$ belongs. 
Assume for simplicity that $p_n = K_n$ and that each $\theta_k$ is estimated using only the $k$-th sample.
Write $n_{k} = \sumin \ind\{k_{i} = k\}$ for the sample size of the $k$-th sample and $\phi'_k(\bX; \theta_k)= \partial \phi_k(\bX; \theta_k)/\partial \theta_k$.
The estimation function, its Jacobian, and covariance matrix are
\begin{align*}
\phi(k_{i}, \bX_{i}; \btheta)
  &= \left(\ind\{k_{i} = k\}\, \frac{n}{n_k} \phi_k(\bX_{i}; \theta_k) \right)_{k = 1}^{K_n},\\
\nabla_{\btheta} \phi(k_{i}, \bX_{i}; \btheta)
  &= \mathrm{diag} \lf(\ind\{k_{i} = k\}\, \frac{n}{n_k} \phi'_k(\bX_{i}; \theta_k)\ri)_{k = 1}^{K_n}, \quad 
I(\btheta^*)
  = \mathrm{diag}\lf(\frac{n}{n_k} \E[\phi_k(\bX_{i}; \theta^*_k)^2] \ri)_{k = 1}^{K_n}.
\end{align*}
The standardization $n/n_k$ is necessary to ensure that the eigenvalues of $n^{-1} \sumin \E[H_n(\bX_i)]$ are bounded away from $0$.
Consequently, $r_n = O ((\sum_{k=1}^{K_n} n_k^{-1})^{1/2})$ if $\E[\phi_k(\bX_{i}; \theta^*_k)^2]$ is uniformly bounded over $i$ and $k$. This framework can easily be extended to multiple parameters $\btheta_k \in \R^{p_k}$, potentially shared across subsamples. In the following examples, we stick to the single-parameter case for simplicity.

\subsubsection{Example: Distributed Inference}

An interesting application arises in distributed inference. Here, \emph{iid} data are distributed over $K_n$ different locations, and the goal is to estimate a parameter $\theta^* \in \R$ from the distributed data. This is a common setup in federated learning, where data are distributed over different devices, and the goal is to estimate a common model.
Our general setup allows for differing sample sizes and population characteristics between locations. This may happen if, for example, the data is collected over hospitals who may share their estimate but not the data for privacy reasons. The distributed estimates $\hat \theta_{k}$ can be reconciled into a global estimate through averaging $\hat \theta_{ K_n + 1} = K_n^{-1} \sum_{k = 1}^{K_n} \hat \theta_{k}$. To put this in our framework, we stack the individual estimating equations $\phi_k(\bX_{i}; \theta_k) = \psi(\bX_{i}; \theta_k)$ as above, and append the reconciliation function $\phi_{K_n + 1}(\bX_{i}; \theta) = K_n^{-1} \sum_{k = 1}^{K_n} \theta_{k} - \theta_{ K_n + 1}$.

\begin{corollary}\label[corollary]{cor:distributed}
  Let $\Theta_0 \subseteq \R$ and $n_1 = \dots = n_{K_n} = n / K_n$. Suppose that, for $\theta \in \Theta_0$, $\psi(\bx; \theta)$ is uniformly bounded and $\psi'(\bx; \theta) = \partial_{\theta} \psi(\bx; \theta)$ is negative, uniformly bounded away from 0 and $-\infty$ and Lipschitz in $\theta$.
Then, if $K_n^3/n \to 0$, with probability tending to one there is a unique solution $\hbtheta$. Moreover, for all $k = 1, \ldots, K_n$, $\sqrt{n/K_n}(\hat{\theta}_k - \theta_k^*) \to N(0, \sigma_k^2)$ and $\sqrt{n}(\htheta_{ K_n + 1} - \theta^*) \to N(0, \sigma^2)$ with
  \begin{align*}
    \sigma_k^2 = \frac{ \E[\psi(\bX_{i}; \theta^*_k)^2]}{\E[\psi'(\bX_{i}; \theta^*_k)]^2}, \quad \sigma^2 =  \lim_{n \to \infty} \frac{1}{K_n} \sum_{k = 1}^{K_n}  \sigma_k^2.
  \end{align*}
\end{corollary}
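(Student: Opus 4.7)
The plan is to cast the distributed problem in the framework of \cref{theorem1} and \cref{theorem2} with $p_n = K_n + 1$. The key structural observation is that the first $K_n$ coordinates of $\phi$ are separable (the $k$-th depends only on $\theta_k$), while the last (reconciliation) coordinate is linear in $\btheta$. Consequently, $J(\btheta)$ has the lower-triangular block form
\begin{align*}
J(\btheta) = \begin{pmatrix} D(\btheta) & \bm{0} \\ K_n^{-1}\bm{1}^\top & -1 \end{pmatrix}, \qquad D(\btheta) = \diag\bigl(\E[\psi'(\bX;\theta_k)]\bigr)_{k=1}^{K_n},
\end{align*}
while $I(\btheta^*)$ is diagonal with entries $K_n\,\E[\psi(\bX;\theta_k^*)^2]$ for $k\le K_n$ and $0$ in the last slot, because $\phi_i(\btheta^*)_{K_n+1}$ is deterministically $0$. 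This gives $r_n = \sqrt{\tr(I(\btheta^*))/n} \asymp K_n/\sqrt{n}$, corresponding to per-coordinate rate $\sqrt{K_n/n}$.

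For consistency via \cref{theorem1}, I would construct $H_n$ by majorising the separable block using the mean value theorem and $\psi'(\bx;\theta)\le -c$, and handling the cross term produced by the linear last coordinate via $2ab \le a^2+b^2$:
\begin{align*}
u_{K_n+1}\bigl(K_n^{-1}\sum_{k\le K_n} u_k - u_{K_n+1}\bigr) \le \tfrac{1}{2K_n}\sum_{k\le K_n} u_k^2 - \tfrac{1}{2}u_{K_n+1}^2.
\end{align*}
The resulting $H_n$ satisfies $\lambda_{\max}(n^{-1}\sum_i \E[H_n(\bX_i)]) \le -\min(c/2,1/2)<0$ for $n$ large, giving \ref{A:Cons1}\ref{eq:Hn-identifiable}. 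Since $\psi$ and $\psi'$ are bounded and only the $k_i$-th coordinate of $\phi_i$ is active, $\|H_n(\bX_i)\|$ is deterministically $O(K_n)$, reducing \ref{A:Cons1}\ref{eq:Hn-bounds} to $K_n = o(n/\ln p_n)$, which is implied by $K_n^3/n\to 0$.

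For the marginal limits via \cref{theorem2}, I would apply the theorem with two different choices of $A_n$ built from $J(\btheta^*)^{-1}$. For the $k$-th statement ($k\le K_n$), take $A_n = K_n^{-1/2}\,e_k^\top J(\btheta^*)^{-1}$, so that $\sqrt{n}\,A_n J(\btheta^*)(\hbtheta-\btheta^*) = \sqrt{n/K_n}(\hat\theta_k-\theta_k^*)$; a direct block computation gives $A_n I(\btheta^*) A_n^\top = \sigma_k^2$. For the reconciliation statement, take $A_n = e_{K_n+1}^\top J(\btheta^*)^{-1} = (K_n^{-1}\bm{1}^\top D^{-1},-1)$, so that $\sqrt n\,A_n J(\btheta^*)(\hbtheta-\btheta^*) = \sqrt n(\hat\theta_{K_n+1}-\theta^*_{K_n+1})$ and $A_n I(\btheta^*)A_n^\top = K_n^{-1}\sum_k \sigma_k^2 \to \sigma^2$. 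Verifying \ref{A:Asymp} for each $A_n$ is then straightforward piece-by-piece from Lipschitzness of $\psi$ in $\theta$ (a consequence of bounded $\psi'$) and Lipschitzness of $\psi'$; \ref{A:Cons2} is immediate from the boundedness of $\psi$.

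The main obstacle is the first condition of \ref{A:Asymp} for the reconciliation coordinate. The term $u_{K_n+1}-u'_{K_n+1}$ appearing inside $A_n[\phi_i(\btheta^*+\bu)-\phi_i(\btheta^*+\bu')]$ is deterministic in $i$, so it is not damped by the $n^{-1}\sum_i$ average and produces an irreducible $O(1)$ contribution. This must still satisfy $o(1/(r_n^2 p_n)) = o(n/K_n^3)$, which is precisely what forces $K_n^3/n \to 0$. The third condition of \ref{A:Asymp} reduces to $\|A_n[J(\btheta^*+\bu)-J(\btheta^*)]\| = O(r_n/K_n) = O(1/\sqrt n)$ being $o(1/K_n)$, which also collapses to $K_n^3/n \to 0$; the remaining verifications are bookkeeping exploiting the block structure, the equal sample sizes $n_k = n/K_n$, and the uniform boundedness of $\psi$ and $\psi'$.
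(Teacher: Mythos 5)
Your approach matches the paper's: you use the same block structure for $J$ and $I$, the same $H_n$ construction (the paper packages your AM-GM step as its \cref{lem:diagonal-H-cover}, yielding a diagonal correction $I/(2\sqrt{K_n})$ rather than your $I/(2K_n)$ and $1/2$, but the idea and the resulting eigenvalue bound are the same), the same choices $A_n = K_n^{-1/2}\be_k^\top J(\btheta^*)^{-1}$ and $A_n = \be_{K_n+1}^\top J(\btheta^*)^{-1}$, and the same identification of the deterministic reconciliation term as the binding $O(1)$ contribution in the first condition of \ref{A:Asymp} that forces $K_n^3/n\to 0$. One small arithmetic slip at the end: your bound $\|A_n[J(\btheta^*+\bu)-J(\btheta^*)]\| = O(1/\sqrt n)$ (which is in fact slightly sharper than the $O(\sqrt{K_n}/\sqrt n)$ the paper derives) makes the third condition of \ref{A:Asymp} equivalent to $K_n^2/n\to 0$, not $K_n^3/n\to 0$; this is harmless because the first condition already imposes the stronger $K_n^3/n\to 0$.
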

While for fixed $K_n=K$, asymptotic normality of $\htheta_{K_n + 1}$ would follow easily from   asymptotic normality of the independent $\htheta_k$, our results provide conditions on the rate of growth of $K_n$ when $K_n \to \infty$.

If the samples have identical distributions, it holds that $\sigma_1 = \cdots = \sigma_{K_n}$, so the averaged estimator is as efficient as one computed from the pooled sample. 
For a fixed number of samples $K$ and in the risk minimization setting, this is in line with the results in \citet{Zhang2013}, who laid the groundwork for further developments in this area, including a diverging dimension of the target parameter \citep{Rosenblatt2016}, a diverging number of samples \citep{Huang2019} and penalized estimation \citep{Battey2018}.
Unlike the references just mentioned, our framework allows for estimation problems beyond risk minimization/optimization and does not require the different samples to be identically distributed.

The independence assumption might be unrealistic, so we now assume dependence induced by a network as described in \cref{ex:network},  and 
\[
 \phi(\bk_i, \bX_i ; \btheta) \coloneqq \frac{1}{m_N} \sum_{j \in I_{i,N}} \left(\ind\{ k_j = k \} \, \frac{N}{N_k} \psi_k(\bZ_j; \theta_k) \right)_{k=1, \ldots, K_n}, \quad \text{where } N_k = \sum_{j=1}^N \ind\{ k_j = k \}.
\]
Note that now, $\nabla \phi(\bk_i, \bX_i ; \btheta)$ and $I(\btheta)$ are not necessarily diagonal matrices, as we allow $\bX_i$ to contain observations from different locations.
Again, we append the reconciliation function $\phi_{K_n + 1}(\bX_{i}; \theta) =K_n^{-1} \sum_{k = 1}^{K_n} \theta_{k} - \theta_{ K_n + 1}$.
For simplicity, we assume that at each location $k$, the marginal distributions of the $\bZ_j$ with $k_j = k$ are identical.

\begin{corollary}\label{cor:network}
  Let $\Theta_0 \subseteq \R$ and $N_1 = \dots = N_{K_n} = N / K_n$. Suppose that, for $\theta \in \Theta_0$, $\psi(\bz; \theta)$ is uniformly bounded and $\psi'(\bz; \theta) = \partial_{\theta} \psi(\bz; \theta)$ is negative, uniformly bounded away from 0 and $-\infty$ and Lipschitz in $\theta$, and that the conditions of \cref{ex:network} hold. Suppose further that \eqref{eq:cov-decay} holds for the projection matrices associated with the asymptotic normality statements below.
  Let $d_n  \coloneqq m_N K_n^{-2} \sum_{k=1}^{K_n} I(\btheta^*)_{kk}$,  assume the common growth conditions $K_n^3/n = o(1)$, $K_n N (n - L_N)^2/(d_n n^2) =o(1)$, $(n- L _N)^5/n = o(1)$, and define
  \begin{align*}
  \sigma_n^2 &\coloneq \frac{1}{N} \sum_{j=1}^N \sum_{j'=1}^N  \frac{ \E[\psi(\bZ_j; \theta^*_{k_j}) \psi(\bZ_{j'}; \theta^*_{k_{j'}}) ]}{\E[\psi'(\bZ_j; \theta_{k_j}^*)]\E[\psi'(\bZ_{j'}; \theta_{k_{j'}}^*)]}, \\
   \sigma_{n,k}^2 & \coloneqq \frac{K_n}{N} \sum_{j=1}^N \sum_{j'=1}^N \ind\{ k_j = k_{j'} = k \} \frac{\E[\psi(\bZ_j; \theta_k^*) \psi(\bZ_{j'}; \theta_k^*)]}{\E[\psi'(\bZ_j; \theta^*_k)]^2}.
  \end{align*}

  For the global estimator, suppose additionally that $N(n-L_N)^2/(n^2 \sigma_n^2) = O(1)$ and
  \begin{align*}
   \max_{1 \le i \le n} \E\Big[ \big( (\sigma_n \sqrt{m_N})^{-1} \sum_{j \in I_{i,N}} \psi(\bZ_j; \theta_{k_j}^*) \big)^4 \Big]  = o(n),
  \end{align*}
  then $\sqrt{N}\, \sigma_n^{-1} (\htheta_{ K_n + 1} - \theta^*) \to N(0, 1)$.
  For a given location $k = 1, \ldots, K_n$, suppose instead that $K_n^3 d_n/(n \sigma_{n,k}^2) = o(1)$, $K_nN(n-L_N)^2/(n^2 \sigma_{n,k}^2) = O(1)$, $d_n K_n^{3/2}/(\sqrt{N} \sigma_{n,k})  = o(1)$ and
    \begin{align*}
   \max_{1 \le i \le n} \E\Big[ \big( (\sigma_{n,k} \sqrt{m_N})^{-1} \sum_{j \in I_{i,N}} \ind\{k_j = k \} \psi(\bZ_j; \theta_k^*) \big)^4 \Big]  = o(n/K_n^2),
  \end{align*}
  then $\sqrt{N/K_n}\, \sigma_{n,k}^{-1} (\htheta_k - \theta^*_k) \to N(0, 1)$.
\end{corollary}
Compared to the \emph{iid} setting in \cref{cor:distributed}, the conditions are more complex since we impose very few assumptions on the dependence structure, which determines $n, L_N, d_n, \sigma_n$ and $\sigma_{n,k}$. 
Examining the best- and worst-case behavior of $d_n$, $\sigma_n$, and $\sigma_{n,k}$ gives a clearer picture:
If the dependence within the $\bX_i$ is sufficiently weak, then $d_n = O(1)$ and $\sigma_n^2 \simeq 1, \sigma_{n,k}^2  \simeq 1$, and we obtain asymptotic normality of $\htheta_{ K_n + 1}$ and $\htheta_k$ at $\sqrt{N}$ and $\sqrt{N/K_n}$ rate.
The fourth moment condition is plausible since $\E[(m_N^{-1/2} \sum_{j \in I_{i,N}} \psi(\bZ_j; \theta_{k_j}^*) )^4] = O(1)$ if the dependence is weak enough.
Among the additional conditions for asymptotic normality of $\htheta_k$, only $K_nN(n-L_N)^2/(n^2 \sigma_{n,k}^2) = O(1)$ remains non-trivial.
If the dependence within the $\bX_i$ is strong, we have $d_n = O(m_N)$ and $\sigma_n^2, \sigma_{n,k}^2 \simeq m_N$.
The slower rates ($\sqrt{n}$ and $\sqrt{n/K_n}$) require fewer assumptions:
The fourth moment conditions become trivial, since the expectations are bounded, and all remaining conditions are implied by $K_n^3/n = o(1)$ and $(n-L_N)^5/n = o(1)$.

\subsection{Stepwise Estimation}
\label{sec:Stepw}

Another setting that shows the flexibility of our results is stepwise estimation. Assume the parameter vector can be grouped  as $\btheta = (\btheta_1, \dots, \btheta_{K_n})$. The parameters are estimated sequentially using the estimates $\hbtheta_{1}, \ldots, \hbtheta_{k - 1}$ from previous iterations: 
\[
\hbtheta_k = \argmax_{\theta_k} \sumin f_k(\bX_{i}; \btheta_k, \hbtheta_1, \ldots, \hbtheta_{k-1}), \quad \btheta_k^* = \argmax_{\btheta_k} \E\lf[ f_k(\bX; \btheta_k,\btheta^*_1, \ldots, \btheta^*_{k-1}) \ri],
\]
for some functions $f_k$.
Denote $\phi_k(\bX_{i}; \btheta_k, \btheta_1, \ldots, \btheta_{k-1}) = \nabla_{\btheta_k}f_k(\bX_{i}; \btheta_k, \btheta_1, \ldots, \btheta_{k-1})$.
Then, the sequential estimator $\hbtheta$ can be expressed as the solution of $ \sumin \phi(\bX_{i};\hbtheta)  = \0$ and $\btheta^*$ is the solution of $\E[ \phi(\bX; \btheta^*) ] = \0$ with
\[
 \phi(\bX_{i}; \btheta)  = \left(
 \phi_1(\bX_{i}; \btheta_1) ,\ \phi_2(\bX_{i}; \btheta_2, \btheta_1) , \ldots, \ \phi_{K_n}(\bX_{i}; \btheta_{K_n}, \btheta_1, \ldots, \btheta_{K_n-1}) \right)^\top.
\]
By treating the entire estimation path as a single parameter vector, this representation allows our general results to accommodate a diverging number of steps $K_n$.

\subsubsection{Example: Causal Inference}

As a concrete example, suppose we want to estimate the causal effect of some covariates $\bZ$ on an outcome $Y$ in the presence of confounders $\bC$ from \emph{iid} observational data. Part of the population has received a treatment, which we indicate by the binary treatment indicator $T$. Under the usual conditions for no unmeasured confounding, the \emph{conditional average treatment effect (CATE)} can be defined as
\begin{align*}
  \mathrm{CATE}(\bz) = \E\lf[\frac{YT}{\Pr(T = 1 \mid \bW)} - \frac{Y(1 - T)}{\Pr(T = 0 \mid \bW)} \mid \bZ = \bz\ri],
\end{align*}
where $\bW = (\bZ, \bC)$; see e.g.~\citet{causal}.
We model the treatment probabilities and CATE by $\Pr(T = 1 \mid \bw) = \sigma( \bw^\top \btheta_1)$ and $\mathrm{CATE}(\bz) = \bz^\top \btheta_2$, where $\sigma$ is an appropriate link function.
The parameters can be estimated by first estimating $\btheta_1$ using maximum likelihood, and then estimating $\btheta_2$ by the plug-in least squares estimator
\begin{align*}
  \hbtheta_2 = \arg\min_{\btheta_2} \sum_{i = 1}^n \lf[\frac{Y_{i} T_{i}}{\sigma(\bW_{i}^\top \hbtheta_1)} - \frac{Y_{i}(1 - T_{i})}{1 - \sigma(\bW_{i}^\top \hbtheta_1)} - \bZ_{i}^\top \btheta_2\ri]^2.
\end{align*}
This stepwise procedure can be reformulated as solving the estimating equation
\begin{align*}
  \sumin\phi(Y_{i}, T_{i}, \bZ_{i}, \bW_{i}; \btheta) & = \sumin \begin{pmatrix}
    \nabla_{\btheta_1} \lf[T_{i} \ln \sigma(\bW_{i}^\top \btheta_1) + (1 - T_{i}) \ln[1 - \sigma(\bW_{i}^\top \btheta_1)]\ri]\\
    -\lf[\frac{Y_{i} T_{i}}{\sigma(\bW_{i}^\top \btheta_1)} - \frac{Y_{i}(1 - T_{i})}{1 - \sigma(\bW_{i}^\top \btheta_1)} - \bZ_{i}^\top \btheta_2\ri] \bZ_{i}
  \end{pmatrix} = \0 .
\end{align*}

\begin{corollary} \label[corollary]{cor:IPW}
  Suppose that, for some $\varepsilon > 0$,
    $\varepsilon \le \sigma(\bW^\top \btheta_1) \le 1-\varepsilon$
  almost surely, uniformly over $\btheta \in \Theta_n$, and that $\sigma$ is twice continuously differentiable with uniformly bounded derivatives.
  Suppose further that $|Y| \le 1$, $\max_k\E[\phi(Y, t, \bZ, \bW; \btheta^*)^4_k] = O(1)$ and $\bW \in \R^{p_n}$ satisfies the design conditions from \eqref{eq:design-cond}.
  Let $\bar \sigma = 1 - \sigma$ and define
  \begin{align*}
    \alpha_1(T, \bW) &= \sup_{\btheta \in \Theta_n} \lf[T(\ln \sigma)''(\bW^\top \btheta_1) + (1 - T) (\ln \bar \sigma)''(\bW^\top \btheta_1)  \ri], \\
    \alpha_2(T, Y, \bW) &= \sup_{\btheta \in \Theta_n} \lf|\frac{TY \sigma'(\bW^\top \btheta_1)}{2\sigma(\bW^\top \btheta_1)^2}\ri| + \lf|\frac{(1 - T)Y \sigma'(\bW^\top \btheta_1)}{2 \bar \sigma(\bW^\top \btheta_1)^2}\ri| - 1.
  \end{align*}
  Suppose there is $c > 0$ such that $\max\{\lambda_{\max}\lf(\E\lf[\alpha_1(T, \bW) \bW \bW^\top  \ri] \ri), \lambda_{\max}\lf(\E\lf[\alpha_2(T, Y, \bW) \bZ \bZ^\top  \ri] \ri) \} \le -c$.

 Then, if $p_n \ln p_n / n \to 0$ and $\rho^{-1}(n)\ln p_n/n \to 0$ (with $\rho$ as defined in \eqref{eq:design-cond}), the estimating equation has a unique solution $\hbtheta$ on $\Theta_n$ with $\| \hbtheta - \btheta^*\| = O_p(\sqrt{p_n / n})$.

  If $ p_n^2  / n \to 0$ and $p_n^{3/2 \, k}/(n^{2 k - 2}) \to 0$ (with $k$ as defined in \eqref{eq:design-cond}), $\hbtheta$ is asymptotically normal.
\end{corollary}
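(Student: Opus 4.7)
The plan is to reformulate the IPW-plus-least-squares procedure as a stacked estimating equation with $\btheta = (\btheta_1, \btheta_2)$ as in \cref{sec:Stepw}, and then verify the hypotheses of \cref{theorem1}, \cref{theorem1-uniqueness} and \cref{theorem2}. Writing $\bu = (\bu_1, \bu_2)$, the crucial quantity to control is the quadratic form $\bu^\top[\phi(\bx; \btheta + \bu) - \phi(\bx; \btheta)]$. Because of the block-triangular structure ($\phi_1$ depends only on $\btheta_1$, while $\phi_2$ is linear in $\btheta_2$ and depends on $\btheta_1$ only through the IPW weight), this splits into three pieces: a $\btheta_1$ piece governed by the Hessian $[T(\ln\sigma)''(\bW^\top\btheta_1) + (1-T)(\ln\bar\sigma)''(\bW^\top\btheta_1)]\bW\bW^\top$; a $\btheta_2$ piece equal to $-(\bZ^\top\bu_2)^2$ coming from the linear component; and a cross-term of the form $\beta(\bW^\top\bu_1)(\bZ^\top\bu_2)$ with $\beta = -YT\sigma'/\sigma^2 - Y(1-T)\sigma'/\bar\sigma^2$.

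The first step is to construct the envelope $H_n(\bx)$ by applying AM--GM to the cross-term, $|\beta(\bW^\top\bu_1)(\bZ^\top\bu_2)| \le \tfrac{|\beta|}{2}(\bW^\top\bu_1)^2 + \tfrac{|\beta|}{2}(\bZ^\top\bu_2)^2$, so that the $\alpha_2$ block absorbs the $-1$ from the linear part and the $+|\beta|/2$ from the cross-term --- this is exactly why the corollary's $\alpha_2$ is defined as $\sup|TY\sigma'/(2\sigma^2)| + \sup|(1-T)Y\sigma'/(2\bar\sigma^2)| - 1$. The resulting block-diagonal envelope
\begin{align*}
H_n(\bx) = \begin{pmatrix} \alpha_1(T,\bW)\,\bW\bW^\top + \tfrac12\bar\beta\,\bW\bW^\top & 0 \\ 0 & \alpha_2(T,Y,\bW)\,\bZ\bZ^\top \end{pmatrix}
\end{align*}
satisfies \ref{A:Cons1}\ref{eq:Hn-def}. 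The eigenvalue condition \ref{A:Cons1}\ref{eq:Hn-identifiable} follows from the two $\lambda_{\min}$ conditions of the corollary, provided the extra $\bar\beta/2$ contribution to the first block is small enough not to offset $\alpha_1$ (which follows from $\sigma,\bar\sigma$ being bounded away from $0$ and $\sigma'$ being uniformly bounded).

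The second step is to verify the moment and tail conditions \ref{A:Cons1}\ref{eq:Hn-bounds}. Since $\alpha_1, \alpha_2, \bar\beta$ are uniformly bounded (by boundedness of $\sigma, \sigma', 1/\sigma, 1/\bar\sigma$ and $|Y|\le 1$), the spectral norm of $H_n(\bx)$ is controlled by $\|\bW\|^2$, and the design conditions \eqref{eq:design-cond} --- in particular $\var[\|\bW\|^2] = O(p_n)$ and the concentration bound via $\rho$ --- then yield \ref{eq:Hn-bounds} with $B_n$ of order $p_n$ under $p_n\ln p_n/n \to 0$, using \cref{lem:tail-conversion} and \cref{lem:semi-definite}. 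Applying \cref{theorem1} gives existence and the $\sqrt{p_n/n}$-rate; uniqueness follows from \cref{theorem1-uniqueness} by verifying the same envelope bound on the slightly larger set $\{\|\btheta - \btheta^*\| \le a_n r_n\}$, which requires no new arguments since the envelope construction was already independent of $\btheta$. For asymptotic normality, I take $A_n = J(\btheta^*)^{-1}$ (bounded in spectral norm due to the two eigenvalue conditions). Assumption \ref{A:Cons2} reduces to $\max_k\E[\phi_i(\btheta^*)_k^4] = O(1)$, which follows from $|Y|\le 1$, bounded IPW weights, and $\E[|\ba^\top\bW|^4] = O(\|\ba\|^4)$. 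For \ref{A:Asymp}, Lipschitz continuity of the Jacobian (inherited from Lipschitz $\sigma,\sigma'$) together with the remaining design conditions \eqref{eq:design-cond} control the three fluctuation terms, producing the growth condition $p_n^2\rho^{-1}(n)/n \to 0$.

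The main obstacle is the cross-term analysis in the first step: the block-triangular structure means no symmetric envelope with negative expectation exists naively, and the AM--GM splitting must be balanced so that the resulting coefficients in both diagonal blocks --- precisely $\alpha_1 + \bar\beta/2$ and $\alpha_2$ --- match the corollary's conditions. A secondary but routine obstacle is checking that the fluctuation bound in \ref{A:Asymp} holds with the claimed rate, since the derivative $\nabla_{\btheta_1}\phi_2$ involves quotients $1/\sigma^2$ and $1/\bar\sigma^2$ whose Lipschitz constants depend on the lower bounds of $\sigma$ and $\bar\sigma$; the uniform lower bound assumption on $\sigma$ makes this manageable but requires care in tracking constants.
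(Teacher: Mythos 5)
Your block-triangular decomposition and the AM--GM step on the scalar cross-term $\beta(\bW^\top\bu_1)(\bZ^\top\bu_2)$ match the paper's structure, and your remarks about verifying \ref{A:Cons1}\ref{eq:Hn-bounds}, \ref{A:Asymp}, and \ref{A:Cons2} via the GLM arguments of \cref{cor:Mest} are consistent with how the paper handles those steps. However, there is a genuine gap in your verification of \ref{A:Cons1}\ref{eq:Hn-identifiable} for the first block of $H_n$. Your envelope has first block $(\alpha_1 + \bar\beta/2)\bW\bW^\top$, and you assert that $\E[(\alpha_1+\bar\beta/2)\bW\bW^\top]$ is sufficiently negative ``because $\sigma,\bar\sigma$ are bounded away from $0$ and $\sigma'$ is uniformly bounded.'' That does not follow: the corollary's eigenvalue hypothesis controls only $\E[\alpha_1\bW\bW^\top]$, while the positive perturbation $\bar\beta/2$ is $O(1)$ but can dominate $\alpha_1$. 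For the logistic link, $(\ln\sigma)'' = -\sigma(1-\sigma)$ has magnitude at most $1/4$, whereas $|\beta|/2 \approx 1/(2\sigma)$ grows as $\sigma$ approaches its lower bound; boundedness gives $|\beta|/2 \le K < \infty$, but nothing in the hypotheses forces $K$ to be smaller than the curvature $\alpha_1$ supplies.

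The paper repairs exactly this by noting, at the very start of the proof, that the first block of $\phi$ can be multiplied by an arbitrary constant $\kappa > 0$ without changing the solution set. This rescales the first diagonal block of the Jacobian to $\kappa\alpha_1\bW\bW^\top$ while leaving the cross-term $\beta\bZ\bW^\top$ (and hence the contribution to the second block) untouched. After the AM--GM split and bounding $|\beta|/2\le K$, $\|\E[\bW\bW^\top]\|\le K$, the first block of $H_n$ becomes $(\kappa\alpha_1 + K)\bW\bW^\top$ with
\begin{align*}
\lambda_{\max}\bigl(\E[(\kappa\alpha_1 + K)\bW\bW^\top]\bigr) \le -\kappa c + K^2,
\end{align*}
which is strictly negative once $\kappa \ge 2K^2/c$. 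Without this free rescaling parameter the envelope cannot be made negative definite from the stated hypotheses alone, so your argument as written does not close. (A secondary, harmless difference: the paper applies \cref{lem:diagonal-H-cover} to the $2\times 2$ matrix obtained after factoring out $(\bW^\top\bu_1, \bZ^\top\bu_2)$, which sidesteps the unbounded norm $\|\bZ\bW^\top\|$; your direct scalar AM--GM achieves the same thing.)
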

The matrices inside the expectations of the eigenvalue condition are the blocks of a block-diagonal matrix $H_n$ constructed in the proof. The eigenvalue condition is easiest to verify if both $\ln \sigma$ and $\ln(1-\sigma)$ are concave, as for logistic and probit links.
For these links, the bounded-away condition above is a uniform-overlap assumption on the realized linear predictors; for example, it holds when $\bW^\top \btheta_1$ remains in a fixed compact set uniformly over $\btheta \in \Theta_n$.

If the number of covariates or confounders is large, we may want to add a sparsity penalty $p_{\lambda_n}(\btheta)$.
For simplicity, suppose that the parameters are reordered such that $\btheta^* = (\btheta_{(1)}^*, \0)$.
The following corollary guarantees that the SCAD-penalized estimator $\hbtheta$ is consistent, unique and asymptotically normal.
\begin{corollary}
\label[corollary]{cor:causalSCAD}
  Suppose that the regularity conditions from \cref{cor:IPW} hold, $\bW$ satisfies the design conditions \eqref{eq:design-cond-pen} (with $|\psi|$ uniformly bounded), $\sqrt{n / s_n}\min_{1 \le k \le s_n} |\theta_{k}^*|  \to \infty$, and that the SCAD penalty is used with $a  > 1 +  \frac{1}{\min\{ c, K^2\}}$, with $c$ from \cref{cor:IPW} and $K \in (0, \infty)$ such that
  \begin{align*}
    |Y \sigma'(\bW^\top \btheta_1) /( 2  \sigma(\bW^\top \btheta_1)^2)| \le K , \quad |Y \sigma'(\bW^\top \btheta_1) / (2 \bar \sigma(\bW^\top \btheta_1)^2)| \le K, \quad \|\E[\bW \bW^\top]\| \le K.
  \end{align*} 
  Suppose that $(\min_{1 \le k \le s_n} |\theta_k^* | - \sqrt{s_n / n})/\lambda_n \to \infty$, $\sqrt{n/s_n} \min_{1 \le k \le s_n} |\theta_k^* | \to \infty$, and
  \begin{align*}
    \lambda_n \ge 8 \sigma_n \sqrt{\frac{ \ln p_n}{n}} \max \left\{ 1, \sup_{\btheta \in \Theta'_n} \lf\| \bar  J(\btheta^*, \btheta)_{(2, 1)} \; \bar J(\btheta^*, \btheta)^{-1}_{(1)} \ri\|_{\infty} \right\} .
  \end{align*}
  Suppose that $s_n$ and $p_n$ satisfy the conditions in \eqref{eq:MestP-dim-cond}.
  Then, with probability tending to 1, the penalized equation has a unique solution $\hbtheta$ on $\Theta_n$ with $\hbtheta_{(2)} = \0$ and $\| \hbtheta - \btheta^*\| = O_p(\sqrt{s_n/n})$. Additionally, $\hbtheta_{(1)}$ is asymptotically normal and as efficient as the oracle solution.
\end{corollary}

\subsubsection{Example: Stochastic Optimization}

Our results also apply when $K_n \to \infty$, as in iterative procedures such as gradient descent or boosting.
Suppose we want to learn $\theta_\infty^* \in \R$ solving $\E[f(\bX; \theta_\infty^*)] = 0$.
Given an initial value $\theta_0^* \in \R$, define $\theta_k^* = \theta_{k - 1}^* - \alpha \E[f(\bX; \theta_{k - 1}^*)]$.
Under appropriate conditions on the learning rate $\alpha$ and smoothness of $f$, the sequence $\theta_k^*$ converges geometrically to $\theta_\infty^*$.

Let $\bX_{1}, \dots, \bX_{n}$ be \emph{iid} samples from the distribution of $\bX$ and let $\Bcal_1, \ldots, \Bcal_{K_n}$ partition $\{1, \dots, n\}$ into sets of size $n/K_n$.
Define $\htheta_0 = \theta_0^*$ and, for $1 \le k \le K_n$,
$
\htheta_k = \htheta_{k - 1} - \alpha \frac{K_n}{n} \sum_{i \in \Bcal_k} f(\bX_{i}; \htheta_{k - 1}),
$
and write the entire iteration path $\hbtheta$ as the solution of $n^{-1} \sumin \phi(\bX_{i}; \hbtheta) = \0$ with
\begin{align}
  \phi(\bX_i; \btheta) =\begin{pmatrix}
    K_n\ind_{i \in \Bcal_1} [\theta^*_{0} - \theta_1 -  \alpha f(\bX_{i}; \theta^*_0)]  \\
    \vdots \\
    K_n\ind_{i \in \Bcal_{K_n}} [\theta_{K_n - 1} - \theta_{K_n} - \alpha f(\bX_{i}; \theta_{K_n - 1})]
  \end{pmatrix},
\end{align}
and similarly for the population version.
The following is one result under simple conditions.

\begin{corollary} \label[corollary]{cor:GD}
  Let $f'(\bx; \theta) = \partial_{\theta} f(\bx; \theta)$.
  Suppose that $K_n^3 / n \to 0$, $\|\btheta^*\|_\infty = O(1)$, $\sup_{\theta \in \Theta_n}  \E[f(\bX; \theta)^4] \allowbreak= O(1)$, $f' \in [\kappa, L]$  and $|f''| \le L$ for some $\kappa, L \in (0, \infty)$, and that $0 < \alpha \le 1/L$.

  Then $\| \hbtheta - \btheta^*\| = O_p\lf(K_n/\sqrt{n}\ri)$,
  and $\hbtheta$ is unique with probability $\to 1$, and for any $ A_n \in \R^{q \times K_n}$ with $\|  A_n\| = O(1)$, we have
\begin{align*}
  \sqrt{n / K_n} A_n (\hbtheta_{} - \btheta_{}^* )
 \to_d \Ncal\lf(0, \lim_{n \to \infty}  A_n \Sigma_n  A_n^\top\ri),
\end{align*}
where $\Sigma_n$ is symmetric and, with the convention $\prod_{j = i}^{i - 1} a_j = 1$, it holds that for $i \le j$, 
\begin{align*}
  \Sigma_{ i, j} &= \alpha^2   \sum_{k = 1}^{i} \var[f(\bX; \theta_{k-1}^*)] \lf[\prod_{m = k}^{i - 1} (1 - \alpha \E[f'(\bX_{}; \theta_{m}^*)])\ri]^2 \lf[\prod_{m = i}^{j - 1} (1 - \alpha \E[f'(\bX_{}; \theta_{m}^*)])\ri].
\end{align*}
\end{corollary}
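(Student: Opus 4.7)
The plan is to verify the hypotheses of \cref{theorem1}, \cref{theorem1-uniqueness} and \cref{theorem2} by exploiting the extreme sparsity of $\phi$: for each $i \in \Bcal_k$, only the $k$-th component of $\phi(\bX_i; \btheta)$ is non-zero, and that component depends only on the two parameters $\theta_{k-1}$ and $\theta_k$. This block-sparse structure is what allows the stated growth $K_n^3/n \to 0$ to suffice.

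First I would set up the population quantities in closed form. The averaged Jacobian $J(\btheta)$ is lower bidiagonal with diagonal $-1$ and subdiagonal $a_k(\btheta) = 1 - \alpha \E[f'(\bX; \theta_{k-1})] \in [\alpha\kappa, 1-\alpha\kappa]$, so solving $J x = e_j$ yields the explicit form $(J(\btheta)^{-1})_{ij} = -\prod_{m=j+1}^i a_m(\btheta)$ for $i \ge j$ and zero otherwise, with $\|J(\btheta)^{-1}\|_\infty \le 1/(\alpha\kappa)$ by geometric decay. A direct calculation gives $I(\btheta^*) = \alpha^2 K_n \diag(v_k)$ with $v_k = \Var[f(\bX; \theta_{k-1}^*)] = O(1)$, hence $r_n = \sqrt{\tr(I(\btheta^*))/n} = O(K_n/\sqrt n)$.

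For consistency and uniqueness, I construct a diagonal majorizer. For $i \in \Bcal_k$, a mean-value expansion combined with $f' \in [\kappa, L]$ gives
\[
\bu^\top[\phi(\bX_i; \btheta^* + \bu) - \phi(\bX_i; \btheta^*)] = K_n[u_k u_{k-1} b_{i,k} - u_k^2], \qquad b_{i,k} \in [0, 1-\alpha\kappa],
\]
with $u_0 = 0$. Applying $2 u_k u_{k-1} \le u_k^2 + u_{k-1}^2$ together with the bound on $b_{i,k}$ yields a diagonal $H_n(\bX_i)$ supported on coordinates $k-1, k$ with entries of magnitude at most $K_n$. Averaging gives $n^{-1}\sum_i \E[H_n(\bX_i)]$ diagonal with every entry $\le -\alpha\kappa$, so \ref{A:Cons1}\ref{eq:Hn-identifiable} holds with $c = \alpha\kappa$. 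The sparsity of $H_n(\bX_i)$ (at most two nonzero entries) gives $n^{-1}\|\sum_i \E[H_n(\bX_i)^2]\| = O(K_n)$, so the rate condition in \ref{eq:Hn-bounds} reduces to $K_n \ln K_n = o(n)$, which follows from $K_n^3/n \to 0$; the tail part is vacuous with $B_n = K_n$. Since the same majorizer also controls \eqref{eq:Hn-def-unique} on any shrinking neighborhood of $\btheta^*$, \cref{theorem1,theorem1-uniqueness} deliver existence, the $K_n/\sqrt n$ rate, and uniqueness.

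For asymptotic normality, the key move is to apply \cref{theorem2} with the rescaled matrix $\tilde A_n := A_n J(\btheta^*)^{-1}/\sqrt{K_n}$, so that $\sqrt n \, \tilde A_n J(\btheta^*)(\hbtheta - \btheta^*) = \sqrt{n/K_n} A_n (\hbtheta - \btheta^*)$. Because $\phi_i$ is supported on a single coordinate $k(i)$, $\tilde A_n \phi_i$ is a scalar multiple of the column $\tilde A_n e_{k(i)}$; averaging $\|\tilde A_n e_{k(i)}\|^2$ over $i$ gives $K_n^{-1}\|\tilde A_n\|_F^2 = O(K_n^{-2})$, and this extra $1/K_n$ factor is exactly what allows both the mean-square Lipschitz part of \ref{A:Asymp} and the fourth-moment bound \ref{A:Cons2} to close at $O(1)$ and $O(K_n^2)$ respectively under $K_n^3/n \to 0$. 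The Jacobian-smoothness piece of \ref{A:Asymp} reduces, via $|f''| \le L$, to $\|J(\btheta^* + \bu) - J(\btheta^*)\|_{op} \le \alpha L \|\bu\|$, giving $O(\sqrt{K_n/n})$ and matching the required $o(1/K_n)$ exactly under $K_n^3/n \to 0$. The limiting covariance $\tilde\Sigma = \alpha^2 \lim A_n J(\btheta^*)^{-1} \diag(v_k) (J(\btheta^*)^{-1})^\top A_n^\top$ collapses to $\lim A_n \Sigma_n A_n^\top$ after substituting the closed form of $J^{-1}$. The main obstacle is aligning the $1/\sqrt{K_n}$ rescaling with the $\Theta(K_n^2)$ growth of $\tr(I(\btheta^*))$: a naive choice $\tilde A_n = A_n J(\btheta^*)^{-1}$ would yield an $O(K_n)$ Lipschitz bound forcing the stricter $K_n^4 = o(n)$, so the rescaling is essential for reaching the threshold stated in the corollary.
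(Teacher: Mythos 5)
Your proof follows the same route as the paper's: compute the bidiagonal $J(\btheta)$ and its closed-form inverse, note $I(\btheta^*) = \alpha^2 K_n\diag(\var[f(\bX;\theta^*_{k-1})])$ so $r_n = O(K_n/\sqrt{n})$, build a diagonal envelope $H_n$ via AM--GM, and apply \cref{theorem2} to $\tilde A_n = A_n J(\btheta^*)^{-1}/\sqrt{K_n}$. Two of your refinements, however, go beyond what the paper's proof writes down and are in fact what makes the threshold $K_n^3/n\to 0$ close. First, your envelope places mass on \emph{both} coordinates $k-1$ and $k$ for $i\in\Bcal_k$, which is exactly what pointwise AM--GM delivers; the single-coordinate $H_n(\bx) = -\alpha\kappa K_n\diag(\ind_{i\in\Bcal_k})$ written in the paper does not satisfy \ref{A:Cons1}\ref{eq:Hn-def} when $|u_{k-1}|>|u_k|$ (the last step of the paper's chain of inequalities quietly discards the $u_{k-1}^2$ contribution). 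Both versions give the same averaged matrix $n^{-1}\sum_i\E[H_n]$ and hence the same $c=\alpha\kappa$, so the corollary is unaffected, but yours is the one that verifies the pointwise condition. Second, your use of $n^{-1}\sum_i\|\tilde A_n e_{k(i)}\|^2 = K_n^{-1}\|\tilde A_n\|_F^2 = O(K_n^{-2})$ extracts an extra factor $1/K_n$ that the operator-norm bound $\|\tilde A_n\|=O(1/\sqrt{K_n})$ alone does not: with only the latter one gets $M_n^2 = O(K_n)$ in the first condition of \ref{A:Asymp}, which would require $K_n^4/n\to 0$. The paper asserts the condition from $\|\tilde A_n\|=O(1/\sqrt{K_n})$ directly; your column-wise averaging is the missing step. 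Two small caveats: \ref{A:Cons2} does \emph{not} need this refinement (the operator-norm bound already gives $O(K_n^{-2})\cdot O(K_n^4)=O(K_n^2)=o(n)$), so ``allows both'' slightly overstates it; and you should state explicitly that the second condition of \ref{A:Asymp} holds with $B_n=K_n$, since $\|\tilde A_n\phi_i(\cdot)\|$ has deterministic Lipschitz constant $O(\sqrt{K_n})\le B_n$ and $K_n = o(\sqrt{n}/(r_n p_n)) = o(n/K_n^2)$ under $K_n^3/n\to 0$.
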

The corollary couples the sample and population iteration paths, showing that they are globally close and that finite-dimensional linear summaries converge to a Gaussian limit.
For example, $ A_n = (0, \dots, 0, 1)$ gives $\sqrt{n/K_n}$-convergence of the final iterate, while $ A_n = (1 / \sqrt{K_n}, \dots, 1 /  \sqrt{K_n})$ gives $\sqrt{n}$-convergence of the averaged iterate.
Another choice is $A_n = (\be_{\lceil K_n/q \rceil}, \be_{\lceil 2K_n/q \rceil}, \dots, \be_{K_n})^\top$, where $\be_k$ is the $k$th standard unit vector.
This discretizes the process $\htheta(t) = \htheta_{\lceil t K_n \rceil}$, $t \in [0, 1]$.
For $n, K_n \to \infty$, one can verify that $\sqrt{n/K_n} A_n(\hbtheta-\btheta^*) \to_d \Ncal(0,V)$ with diagonal $V$, suggesting white-noise fluctuations around the population path.
The assumptions can be relaxed to allow, e.g., a slowly decaying learning rate or probabilistic bounds on $f'$; other iterative algorithms can be handled similarly.

Together, these examples illustrate how the general theory accommodates various estimation procedures beyond standard settings, including dependent data and stepwise procedures with a diverging number of steps. Combined with the broad penalty conditions developed above, this provides a unified route to consistency and asymptotic normality in high-dimensional estimating equations.

\newpage
\bibliography{bibliography}

@book{rio2017asymptotic,
  title={Asymptotic theory of weakly dependent random processes},
  author={Rio, Emmanuel and others},
  volume={80},
  year={2017},
  publisher={Springer}
}

@inproceedings{doukhan1995invariance,
  title={Invariance principles for absolutely regular empirical processes},
  author={Doukhan, Paul and Massart, Pascal and Rio, Emmanuel},
  booktitle={Annales de l'IHP Probabilit{\'e}s et statistiques},
  volume={31},
  number={2},
  pages={393--427},
  year={1995}
}

@book{vdV,
  title={Weak Convergence and Empirical Processes: With Applications to Statistics},
  author={Van der Vaart, AW and Wellner, Jon A},
  year={2023},
  publisher={Springer Nature}
}

@book{vdV2, 
place={Cambridge}, 
series={Cambridge Series in Statistical and Probabilistic Mathematics}, 
title={Asymptotic Statistics}, 
DOI={10.1017/CBO9780511802256}, 
publisher={Cambridge University Press}, 
author={van der Vaart, A. W.}, 
year={1998}, 
collection={Cambridge Series in Statistical and Probabilistic Mathematics}}

@article{IVT,
author = {Fierro, Raúl and Yañez, Carlos and Morales, Claudio},
year = {2004},
month = {08},
pages = {},
title = {The aftermath of the intermediate value theorem},
volume = {2004},
journal = {Fixed Point Theory and Applications},
doi = {10.1155/S1687182004310053}
}

@book{talagrand2005generic,
  title={The Generic Chaining: Upper and Lower Bounds of Stochastic Processes},
  author={Talagrand, Michel},
  year={2005},
  publisher={Springer Science \& Business Media}
}

@book{WWbook, 
place={Cambridge}, 
series={Cambridge Series in Statistical and Probabilistic Mathematics}, 
title={High-Dimensional Statistics: A Non-Asymptotic Viewpoint}, publisher={Cambridge University Press}, author={Wainwright, Martin J.}, 
year={2019}, 
collection={Cambridge Series in Statistical and Probabilistic Mathematics}
}

@article{GroupL,
author = {Yuan, Ming and Lin, Yi},
title = {Model Selection and Estimation in Regression with Grouped Variables},
journal = {Journal of the Royal Statistical Society: Series B (Statistical Methodology)},
volume = {68},
number = {1},
pages = {49-67},
keywords = {Analysis of variance, Lasso, Least angle regression, Non-negative garrotte, Piecewise linear solution path},
doi = {https://doi.org/10.1111/j.1467-9868.2005.00532.x},
url = {https://rss.onlinelibrary.wiley.com/doi/abs/10.1111/j.1467-9868.2005.00532.x},
eprint = {https://rss.onlinelibrary.wiley.com/doi/pdf/10.1111/j.1467-9868.2005.00532.x},
year = {2006}
}

@article{Frank93,
author = {Frank, lldiko E.  and Jerome H. Friedman},
title = {A Statistical View of Some Chemometrics Regression Tools},
journal = {Technometrics},
volume = {35},
number = {2},
pages = {109--135},
year = {1993},
publisher = {Taylor \& Francis},
doi = {10.1080/00401706.1993.10485033},URL = {  https://www.tandfonline.com/doi/abs/10.1080/00401706.1993.10485033},
eprint = {  https://www.tandfonline.com/doi/pdf/10.1080/00401706.1993.10485033 }
}

@article{Fan,
author = {Jianqing Fan and Heng Peng},
title = {{Nonconcave penalized likelihood with a diverging number of parameters}},
volume = {32},
journal = {The Annals of Statistics},
number = {3},
publisher = {Institute of Mathematical Statistics},
pages = {928 -- 961},
year = {2004},
doi = {10.1214/009053604000000256},
URL = {https://doi.org/10.1214/009053604000000256}
}

@article{TibLasso,
 ISSN = {00359246},
 URL = {http://www.jstor.org/stable/2346178},
 author = {Robert Tibshirani},
 journal = {Journal of the Royal Statistical Society. Series B (Methodological)},
 number = {1},
 pages = {267--288},
 publisher = {[Royal Statistical Society, Wiley]},
 title = {Regression Shrinkage and Selection via the Lasso},
 urldate = {2024-06-24},
 volume = {58},
 year = {1996}
}

@Article{Fan1997,
author={Fan, Jianqing},
title={Comments on ``{W}avelets in statistics: {A} review'' by {A.} {A}ntoniadis},
journal={Journal of the Italian Statistical Society},
year={1997},
month={Aug},
day={01},
volume={6},
number={2},
pages={131},
issn={1613-981X},
doi={10.1007/BF03178906},
url={https://doi.org/10.1007/BF03178906}
}

@article{Fan01,
author = {Jianqing Fan and Runze Li},
title = {Variable Selection via Nonconcave Penalized Likelihood and its Oracle Properties},
journal = {Journal of the American Statistical Association},
volume = {96},
number = {456},
pages = {1348--1360},
year = {2001},
publisher = {Taylor \& Francis},
doi = {10.1198/016214501753382273},
URL = { https://doi.org/10.1198/016214501753382273},
eprint = {  https://doi.org/10.1198/016214501753382273}
}

@article{Zhang10,
author = {Cun-Hui Zhang},
title = {{Nearly unbiased variable selection under minimax concave penalty}},
volume = {38},
journal = {The Annals of Statistics},
number = {2},
publisher = {Institute of Mathematical Statistics},
pages = {894 -- 942},
keywords = {correct selection, Degrees of freedom, least squares, mean squared error, minimax, Model selection, nonconvex minimization, penalized estimation, risk estimation, selection consistency, sign consistency, unbiasedness, Variable selection},
year = {2010},
doi = {10.1214/09-AOS729},
URL = {https://doi.org/10.1214/09-AOS729}
}

@Article{FanLv2010,
author={Fan, Jianqing
and Lv, Jinchi},
title={A selective overview of variable selection in high dimensional feature space},
journal={Statistica Sinica},
year={2010},
month={Jan},
volume={20},
number={1},
pages={101-148},
note={PMC3092303},
issn={1017-0405},
url={https://www.ncbi.nlm.nih.gov/pubmed/21572976}
}

@article{ElN,
    author = {Zou, Hui and Hastie, Trevor},
    title = "{Regularization and Variable Selection Via the Elastic Net}",
    journal = {Journal of the Royal Statistical Society Series B: Statistical Methodology},
    volume = {67},
    number = {2},
    pages = {301-320},
    year = {2005},
    month = {03},
    issn = {1369-7412},
    doi = {10.1111/j.1467-9868.2005.00503.x},
    url = {https://doi.org/10.1111/j.1467-9868.2005.00503.x},
    eprint = {https://academic.oup.com/jrsssb/article-pdf/67/2/301/49795094/jrsssb\_67\_2\_301.pdf},
}

@article{PortnoyCons,
author = {Stephen Portnoy},
title = {{Asymptotic Behavior of $M$-Estimators of $p$ Regression Parameters when $p^2/n$ is Large. I. Consistency}},
volume = {12},
journal = {The Annals of Statistics},
number = {4},
publisher = {Institute of Mathematical Statistics},
pages = {1298 -- 1309},
keywords = {$M$-estimators, asymptotic normality, consistency, General linear model, regression, robustness},
year = {1984},
doi = {10.1214/aos/1176346793},
URL = {https://doi.org/10.1214/aos/1176346793}
}

@article{PortnoyAsymp,
author = {Stephen Portnoy},
title = {{Asymptotic Behavior of $M$ Estimators of $p$ Regression Parameters when $p^2 / n$ is Large; II. Normal Approximation}},
volume = {13},
journal = {The Annals of Statistics},
number = {4},
publisher = {Institute of Mathematical Statistics},
pages = {1403 -- 1417},
keywords = {$M$ estimators, asymptotic normality, consistency, General linear model, regression, robustness},
year = {1985},
doi = {10.1214/aos/1176349744},
URL = {https://doi.org/10.1214/aos/1176349744}
}

@article{Huber73,
author = {Peter J. Huber},
title = {{Robust Regression: Asymptotics, Conjectures and Monte Carlo}},
volume = {1},
journal = {The Annals of Statistics},
number = {5},
publisher = {Institute of Mathematical Statistics},
pages = {799 -- 821},
year = {1973},
doi = {10.1214/aos/1176342503},
URL = {https://doi.org/10.1214/aos/1176342503}
}

@article{Mammen89,
author = {Enno Mammen},
title = {{Asymptotics with Increasing Dimension for Robust Regression with Applications to the Bootstrap}},
volume = {17},
journal = {The Annals of Statistics},
number = {1},
publisher = {Institute of Mathematical Statistics},
pages = {382 -- 400},
keywords = {$M$-estimators, asymptotic normality, bootstrap, dimension asymptotics, linear model},
year = {1989},
doi = {10.1214/aos/1176347023},
URL = {https://doi.org/10.1214/aos/1176347023}
}

@article{FLasso,
author = {Tibshirani, Robert and Saunders, Michael and Rosset, Saharon and Zhu, Ji and Knight, Keith},
title = {Sparsity and smoothness via the fused Lasso},
journal = {Journal of the Royal Statistical Society: Series B (Statistical Methodology)},
volume = {67},
number = {1},
pages = {91-108},
keywords = {Fused Lasso, Gene expression, Lasso, Least squares regression, Protein mass spectroscopy, Sparse solutions, Support vector classifier},
doi = {https://doi.org/10.1111/j.1467-9868.2005.00490.x},
url = {https://rss.onlinelibrary.wiley.com/doi/abs/10.1111/j.1467-9868.2005.00490.x},
eprint = {https://rss.onlinelibrary.wiley.com/doi/pdf/10.1111/j.1467-9868.2005.00490.x},
year = {2005}
}

@article{Knight,
author = {Keith Knight and Wenjiang Fu},
title = {{Asymptotics for Lasso-type estimators}},
volume = {28},
journal = {The Annals of Statistics},
number = {5},
publisher = {Institute of Mathematical Statistics},
pages = {1356 -- 1378},
keywords = {epi-convergence in distribution, Lasso, penalized regression, shrinkage estimation},
year = {2000},
doi = {10.1214/aos/1015957397},
URL = {https://doi.org/10.1214/aos/1015957397}
}

@article{Zou,
author = {Hui Zou},
title = {The Adaptive Lasso and Its Oracle Properties},
journal = {Journal of the American Statistical Association},
volume = {101},
number = {476},
pages = {1418--1429},
year = {2006},
publisher = {Taylor \& Francis},
doi = {10.1198/016214506000000735},URL = { https://doi.org/10.1198/016214506000000735},
eprint = { https://doi.org/10.1198/016214506000000735}}

@article{Zhao,
  author  = {Peng Zhao and Bin Yu},
  title   = {On Model Selection Consistency of Lasso},
  journal = {Journal of Machine Learning Research},
  year    = {2006},
  volume  = {7},
  number  = {90},
  pages   = {2541--2563},
  url     = {http://jmlr.org/papers/v7/zhao06a.html}
}

@book{BuehlBook,
  title={Statistics for High-Dimensional Data: Methods, Theory and Applications},
  author={B{\"u}hlmann, P. and van de Geer, S.},
  isbn={9783642201929},
  lccn={2011930793},
  series={Springer Series in Statistics},
  url={https://books.google.de/books?id=S6jYXmh988UC},
  year={2011},
  publisher={Springer Berlin Heidelberg}
}

@article{Welsh,
author = {A. H. Welsh},
title = {{On $M$-Processes and $M$-Estimation}},
volume = {17},
journal = {The Annals of Statistics},
number = {1},
publisher = {Institute of Mathematical Statistics},
pages = {337 -- 361},
keywords = {$M$-estimators, Asymptotic linearity, large $p$ asymptotics, one-step $M$-estimators, regression quantiles, robust estimation, stochastic equicontinuity},
year = {1989},
doi = {10.1214/aos/1176347021},
URL = {https://doi.org/10.1214/aos/1176347021}
}

@article{Yohai,
author = {Victor J. Yohai and Ricardo A. Maronna},
title = {{Asymptotic Behavior of $M$-Estimators for the Linear Model}},
volume = {7},
journal = {The Annals of Statistics},
number = {2},
publisher = {Institute of Mathematical Statistics},
pages = {258 -- 268},
keywords = {asymptotic normality, consistency, linear model, robust estimation},
year = {1979},
doi = {10.1214/aos/1176344610},
URL = {https://doi.org/10.1214/aos/1176344610}
}

@article{Li_mEst,
 ISSN = {10170405, 19968507},
 URL = {http://www.jstor.org/stable/24309277},
 author = {Gaorong Li and Heng Peng and Lixing Zhu},
 journal = {Statistica Sinica},
 number = {1},
 pages = {391--419},
 publisher = {Institute of Statistical Science, Academia Sinica},
 title = {Nonconcave penalized M-estimation with a diverging number of parameters},
 urldate = {2024-06-11},
 volume = {21},
 year = {2011}
}

@article{He2000,
title = {On Parameters of Increasing Dimensions},
journal = {Journal of Multivariate Analysis},
volume = {73},
number = {1},
pages = {120-135},
year = {2000},
issn = {0047-259X},
doi = {https://doi.org/10.1006/jmva.1999.1873},
url = {https://www.sciencedirect.com/science/article/pii/S0047259X99918730},
author = {Xuming He and Qi-Man Shao}
}

@article{Lee,
  title = {On model selection consistency of regularized M-estimators},
  volume = {9},
  ISSN = {1935-7524},
  url = {http://dx.doi.org/10.1214/15-EJS1013},
  DOI = {10.1214/15-ejs1013},
  number = {1},
  journal = {Electronic Journal of Statistics},
  publisher = {Institute of Mathematical Statistics},
  author = {Lee,  Jason D. and Sun,  Yuekai and Taylor,  Jonathan E.},
  year = {2015},
  month = Jan 
}

@article{WWpaper,
   title={A Unified Framework for High-Dimensional Analysis of {$M$}-Estimators with Decomposable Regularizers},
   volume={27},
   ISSN={0883-4237},
   url={http://dx.doi.org/10.1214/12-STS400},
   DOI={10.1214/12-sts400},
   number={4},
   journal={Statistical Science},
   publisher={Institute of Mathematical Statistics},
   author={Negahban, Sahand N. and Ravikumar, Pradeep and Wainwright, Martin J. and Yu, Bin},
   year={2012},
   month=11 }

@article{Loh,
 ISSN = {00905364},
 URL = {http://www.jstor.org/stable/44245826},
 author = {Po-Ling Loh},
 journal = {The Annals of Statistics},
 number = {2},
 pages = {866--896},
 publisher = {Institute of Mathematical Statistics},
 title = {Statistical consistency and asymptotic normality for high-dimensional robust {M}-estimators},
 urldate = {2024-06-12},
 volume = {45},
 year = {2017}
}

@article{LohWW,
  author  = {Po-Ling Loh and Martin J. Wainwright},
  title   = {Regularized M-estimators with Nonconvexity: Statistical and Algorithmic Theory for Local Optima},
  journal = {Journal of Machine Learning Research},
  year    = {2015},
  volume  = {16},
  number  = {19},
  pages   = {559--616},
  url     = {http://jmlr.org/papers/v16/loh15a.html}
}

@book{clarke1990optimization,
  title={Optimization and Nonsmooth Analysis},
  author={Clarke, Frank H},
  year={1990},
  publisher={SIAM}
}

@book{causal,
  title={Causal Analysis: Impact Evaluation and Causal Machine Learning with Applications in R},
  author={Huber, M.},
  isbn={9780262374910},
  lccn={2022033286},
  url={https://books.google.de/books?id=rIyXzwEACAAJ},
  year={2023},
  publisher={MIT Press}
}

@article{LohWW2,
author = {Po-Ling Loh and Martin J. Wainwright},
title = {{Support recovery without incoherence: A case for nonconvex regularization}},
volume = {45},
journal = {The Annals of Statistics},
number = {6},
publisher = {Institute of Mathematical Statistics},
pages = {2455 -- 2482},
keywords = {$M$-estimator, High-dimensional statistics, Lasso, nonconvex regularizer, Sparsity, Variable selection},
year = {2017},
doi = {10.1214/16-AOS1530},
URL = {https://doi.org/10.1214/16-AOS1530}
}

@article{huang2010,
author = {Jian Huang and Joel L. Horowitz and Fengrong Wei},
title = {{Variable selection in nonparametric additive models}},
volume = {38},
journal = {The Annals of Statistics},
number = {4},
publisher = {Institute of Mathematical Statistics},
pages = {2282 -- 2313},
keywords = {adaptive group LASSO, component selection, High-dimensional data, Nonparametric regression, selection consistency},
year = {2010},
doi = {10.1214/09-AOS781},
URL = {https://doi.org/10.1214/09-AOS781}
}

@article{portnoy1986central,
  title={On the central limit theorem in $\mathbb{R}^p$ when $p \to \infty$},
  author={Portnoy, Stephen},
  journal={Probability Theory and Related Fields},
  volume={73},
  number={4},
  pages={571--583},
  year={1986},
  publisher={Springer}
}

@article{ElKaroui2013,
  title = {On robust regression with high-dimensional predictors},
  volume = {110},
  ISSN = {1091-6490},
  url = {http://dx.doi.org/10.1073/pnas.1307842110},
  DOI = {10.1073/pnas.1307842110},
  number = {36},
  journal = {Proceedings of the National Academy of Sciences},
  publisher = {Proceedings of the National Academy of Sciences},
  author = {El Karoui,  Noureddine and Bean,  Derek and Bickel,  Peter J. and Lim,  Chinghway and Yu,  Bin},
  year = {2013},
  pages = {14557–14562}
}

@article{Dobriban2018,
  title = {High-dimensional asymptotics of prediction: Ridge regression and classification},
  volume = {46},
  ISSN = {0090-5364},
  url = {http://dx.doi.org/10.1214/17-AOS1549},
  DOI = {10.1214/17-aos1549},
  number = {1},
  journal = {The Annals of Statistics},
  publisher = {Institute of Mathematical Statistics},
  author = {Dobriban,  Edgar and Wager,  Stefan},
  year = {2018}
}

@article{Hastie2022,
author = {Trevor Hastie and Andrea Montanari and Saharon Rosset and Ryan J. Tibshirani},
title = {{Surprises in high-dimensional ridgeless least squares interpolation}},
volume = {50},
journal = {The Annals of Statistics},
number = {2},
publisher = {Institute of Mathematical Statistics},
pages = {949 -- 986},
keywords = {interpolation, overparametrization, Random matrix theory, regression, Ridge regression},
year = {2022},
doi = {10.1214/21-AOS2133},
URL = {https://doi.org/10.1214/21-AOS2133}
}

@article{Zhang2013,
author = {Zhang, Yuchen and Duchi, John C. and Wainwright, Martin J.},
title = {Communication-efficient algorithms for statistical optimization},
year = {2013},
issue_date = {January 2013},
publisher = {JMLR.org},
volume = {14},
number = {1},
issn = {1532-4435},
journal = {J. Mach. Learn. Res.},
month = jan,
pages = {3321–3363},
numpages = {43},
keywords = {subsampling, stochastic optimization, distributed learning, averaging}
}

@article{Huang2019,
  title = {A distributed one-step estimator},
  volume = {174},
  ISSN = {1436-4646},
  url = {http://dx.doi.org/10.1007/s10107-019-01369-0},
  DOI = {10.1007/s10107-019-01369-0},
  number = {1–2},
  journal = {Mathematical Programming},
  publisher = {Springer Science and Business Media LLC},
  author = {Huang,  Cheng and Huo,  Xiaoming},
  year = {2019},
  pages = {41–76}
}

@article{Rosenblatt2016,
    author = {Rosenblatt, Jonathan D. and Nadler, Boaz},
    title = {On the optimality of averaging in distributed statistical learning},
    journal = {Information and Inference: A Journal of the IMA},
    volume = {5},
    number = {4},
    pages = {379-404},
    year = {2016},
    month = {06},
    issn = {2049-8764},
    doi = {10.1093/imaiai/iaw013},
    url = {https://doi.org/10.1093/imaiai/iaw013},
    eprint = {https://academic.oup.com/imaiai/article-pdf/5/4/379/8395007/iaw013.pdf}
}

@article{Battey2018,
  title = {Distributed testing and estimation under sparse high dimensional models},
  volume = {46},
  ISSN = {0090-5364},
  url = {http://dx.doi.org/10.1214/17-AOS1587},
  DOI = {10.1214/17-aos1587},
  number = {3},
  journal = {The Annals of Statistics},
  publisher = {Institute of Mathematical Statistics},
  author = {Battey,  Heather and Fan,  Jianqing and Liu,  Han and Lu,  Junwei and Zhu,  Ziwei},
  year = {2018}
}

@article{Bach,
  author  = {Francis R. Bach},
  title   = {Consistency of the Group Lasso and Multiple Kernel Learning},
  journal = {Journal of Machine Learning Research},
  year    = {2008},
  volume  = {9},
  number  = {40},
  pages   = {1179-1225},
  url     = {http://jmlr.org/papers/v9/bach08b.html}
}

@article{GroupSCAD,
    author = {Wang, Lifeng and Chen, Guang and Li, Hongzhe},
    title = {Group SCAD regression analysis for microarray time course gene expression data},
    journal = {Bioinformatics},
    volume = {23},
    number = {12},
    pages = {1486-1494},
    year = {2007},
    month = {06},
    issn = {1367-4803},
    doi = {10.1093/bioinformatics/btm125},
    url = {https://doi.org/10.1093/bioinformatics/btm125}
}

@article{GroupSCAD2,
title = {Model selection and estimation in high dimensional regression models with group SCAD},
journal = {Statistics \& Probability Letters},
volume = {103},
pages = {86-92},
year = {2015},
issn = {0167-7152},
doi = {https://doi.org/10.1016/j.spl.2015.04.017},
url = {https://www.sciencedirect.com/science/article/pii/S0167715215001285},
author = {Xiao Guo and Hai Zhang and Yao Wang and Jiang-Lun Wu}
}

@article{Breheny2009,
  title = {Penalized methods for bi-level variable selection},
  volume = {2},
  ISSN = {1938-7997},
  url = {http://dx.doi.org/10.4310/SII.2009.V2.N3.A10},
  DOI = {10.4310/sii.2009.v2.n3.a10},
  number = {3},
  journal = {Statistics and Its Interface},
  publisher = {International Press of Boston},
  author = {Breheny,  Patrick and Huang,  Jian},
  year = {2009},
  pages = {369–380}
}

@article{Yokoyama1980,
  title = {Moment bounds for stationary mixing sequences},
  volume = {52},
  ISSN = {1432-2064},
  url = {http://dx.doi.org/10.1007/BF00534186},
  DOI = {10.1007/bf00534186},
  number = {1},
  journal = {Zeitschrift für Wahrscheinlichkeitstheorie und Verwandte Gebiete},
  publisher = {Springer Science and Business Media LLC},
  author = {Yokoyama,  Ryozo},
  year = {1980},
  pages = {45–57}
}
\bibliographystyle{apalike}
\newpage

\appendix

\section{Penalties covered by our framework}\label{sec:app_penalties}

\begin{exmp}[Lasso]
  The $\ell_1$ penalty \citep{TibLasso} $p_{\lambda_n}(\btheta) = \lambda_n \| \btheta \|_1$ is not differentiable at $0$ and the generalized gradient is given by $ \partial p_{\lambda_n}(\btheta)_k = \lambda_n \sign(\theta_{k})$, where $\sign(0)$ is allowed to be any number in $[-1, 1]$.
\end{exmp}

\begin{exmp}[Elastic Net]
  The elastic net penalty \citep{ElN} is given by $p_{\lambda_{n, 1}, \lambda_{n, 2}}(\btheta) = \lambda_{n, 1} \| \btheta \|_1 + \lambda_{n, 2}  \| \btheta \|_2^2$.
  It holds that $\partial p_{\blambda_n}(\btheta)_k = \lambda_{n, 1} \sign(\theta_{k}) + 2 \lambda_{n, 2} \theta_{k}$.
  For $\lambda_{n, 1} > 0$, the elastic net can induce sparsity.
\end{exmp}

\begin{exmp}[$\ell_q$ penalty]
  The $\ell_q$ penalty \citep{Frank93} $p_{\lambda_n}(\btheta)= \lambda_n \| \btheta \|_q^q$ 
with $q \in (0, 1]$ is not differentiable at 0 and can therefore induce sparsity. 
   The $\ell_q$ penalty is also not Lipschitz around $0$ for $q < 1$, but the definition of the generalized gradient can be extended to functions that are not locally Lipschitz, see \citet[Section 2.4]{clarke1990optimization}. 
   This leads to generalized gradients of the form $\partial p_{ \lambda_n}(\btheta)_k = \{ \lambda_n q \sign(\theta_{k}) |\theta_{k}|^{q - 1}\}$ for $\theta_{k} \neq 0$ and $\partial p_{ \lambda_n}(\btheta)_k = \R$ for $\theta_{k} = 0$.
  The $\ell_q$ penalty with $q < 1$ is special in the sense that $\btheta = \0$ is always a solution to the penalized estimating equation.
\end{exmp}

\begin{exmp}[SCAD]
  The SCAD penalty (\emph{smoothly clipped absolute deviation}) \citep{Fan1997, Fan01} was designed to produce a penalized estimator that is unbiased for large parameters.
  For a single parameter, the SCAD penalty and its derivative are defined as
  \begin{align*}
    p_{\lambda_n}(\theta) = \begin{cases}
                            \lambda_n|\theta|                                             & \hspace*{-12pt} \text{if } |\theta| \le \lambda_n,             \\
                            \frac{2a \lambda_n |\theta| - \theta^2 -\lambda_n^2}{ 2(a - 1)} & \hspace*{-12pt} \text{if } \lambda_n < |\theta| \le a \lambda_n, \\
                            \frac{(a + 1) \lambda_n^2}{2}                                 & \hspace*{-12pt} \text{if } |\theta| > a \lambda_n,
                          \end{cases}
    \
    p'_{\lambda_n}(\theta) = \begin{cases}
                             \lambda_n \sign(\theta)                                  & \hspace*{-12pt}\text{if } |\theta| \le \lambda_n,             \\
                             \frac{\sign(\theta)  (a \lambda_n - |\theta|)}{ (a - 1)} & \hspace*{-12pt}\text{if } \lambda_n < |\theta| \le a \lambda_n, \\
                             0                                                      & \hspace*{-12pt}\text{if } |\theta| > a \lambda_n,
                           \end{cases}
  \end{align*}
  for some fixed $a > 2$.
  For multiple parameters, the SCAD penalty is used componentwise as $p_{\lambda_n}(\btheta) = \sum_{k=1}^{p_n} p_{\lambda_n}(\theta_{k})$.
  Around the origin, SCAD coincides with the Lasso penalty, so it can induce sparsity.
  Since the derivative $p'_{\lambda_n}(\theta)$ is zero for all $|\theta| \ge a \lambda_n$, it leads to an unbiased estimating equation for large parameters.
\end{exmp}

\begin{exmp}[MCP]
  A penalty that is unbiased for large coefficients (i.e., $p'_{\lambda_n}(| \theta |)  = 0$ for $|\theta| \ge a \lambda_n$ with some $a > 0$) and induces sparsity (e.g., $\lim_{\theta \downarrow 0} p'_{\lambda_n}( \theta ) = \lambda_n$) must be nonconvex.
  The \emph{minimax concave penalty} (MCP) \citep{Zhang10} is the ``most convex'' penalty among the penalties satisfying unbiasedness and sparsity, i.e., it minimizes the maximum concavity. For a single parameter, it is given by
  \begin{align*}
    p_{\lambda_n}(\theta) = \ind\{|\theta| \le a \lambda_n\} \lf(\lambda_n |\theta| - \frac{\theta^2 }{2 a}\ri) + \ind\{|\theta| > a \lambda_n\} \frac{a \lambda_n^2 }{2},
  \end{align*}
  with derivative
  \[
    p'_{\lambda_n}( \theta ) = \ind\{|\theta| \le a \lambda_n\} \sign(\theta) \frac{(a \lambda_n - |\theta|)}{a}
  \]
  for some fixed $a > 0$ \citep{FanLv2010}. The penalty shares a similar behavior to SCAD in that it coincides with the Lasso around $\theta = 0$ and yields an unbiased estimating equations for $|\theta|$ large.
\end{exmp}

\begin{exmp}[Fusion penalty]
  The aim of a \emph{fusion penalty} is to reduce the number of different coefficients, which is of particular interest for categorical data \citep{FLasso}.
  This is achieved by penalizing differences between coefficients, e.g., $p_{\lambda_n}(\btheta) = \lambda_n\sum_{k=1}^{p_n - 1} | \theta_{ k+1} - \theta_{ k}|$.
  This penalty fits within our framework via reparametrization, i.e., by defining $\beta_{1} = \theta_{1}$ and $\beta_{ k} = \theta_{k} - \theta_{k-1}$ and adapting the estimation function $\phi$ accordingly.
  Then, zero-entries of $\bbeta$ correspond to parameters being ``fused''.
\end{exmp}

\begin{exmp}[Group Lasso, non-overlapping groups] \label{ex:GroupL}
  The Group Lasso penalty \citep{GroupL} is given by $p_{\blambda_n}(\btheta) =  \sum_{g= 1}^{K_n} \lambda_{n, g}  \| \btheta_{ G_g} \|_2$ with a partition $G_1, \ldots, G_{K_n}$ of $\{ 1, \ldots, p_n \}$.
  In this case, $\btheta_{(2)}$ contains all $\btheta_{G_g}$ with $\btheta^*_{ G_g} = \0$.
  The generalized gradient is given by 
  \begin{align*}
    \partial p_{\blambda_n}(\btheta)_{G_g} = \ind\{ \btheta_{ G_g}  \neq \0\} \, \lambda_{n,g} \frac{\btheta_{ G_g}}{\| \btheta_{ G_g} \|_2} +  \ind\{ \btheta_{ G_g}  = \0\} \, B(\0_{ | G_g |}, \lambda_{n,g}) \quad \text{for all } g  =1, \ldots, K_n.
  \end{align*}
  Note that the Group Lasso contains the $\ell_2$ norm penalty $p_{\lambda_n}(\btheta) = \| \btheta \|_2$ as a special case with one single group $G_1 = \{ 1, \ldots, p_n\}$.
  In this case, the penalty only induces sparsity if $\btheta^* = \0$.
\end{exmp}

\begin{exmp}[Group Lasso, overlapping groups] \label{ex:GroupLO}
  One may also use the Group Lasso with overlapping groups $G_1, \ldots, G_{K_n} \subseteq \{ 1, \ldots, p_n \}$.
  In this case, $\btheta_{(2)}$ contains all $\btheta_{k}$ belonging to at least one group $G_g$ with $\btheta^*_{ G_g} = \0$.
  The generalized gradient is the sum of valid generalized gradients of Group Lasso with non-overlapping groups, i.e,
  \[
  \bz \in \partial p_{\blambda_n}(\btheta) \iff \bz = \sum_{g=1}^{K_n} \bz^{(g)}, \text{ where } \bz^{(g)} \in \partial f^{(g)}(\btheta) \text{ for all } g =1, \ldots, K_n,
  \]
  where $f^{(g)}(\btheta) \coloneq \lambda_{n,g} \| \btheta_{G_g} \|_2$.
  This generalized gradient also satisfies Assumption \ref{A:Penalty2}.
\end{exmp}

\begin{exmp}[Group SCAD]\label{ex:GroupSCAD}
  Another group penalty of the form $p_{\blambda_n} = \sum_{g=1}^{K_n} p_{\lambda_{n,k}}(\| \btheta_{G_g} \|_2)$ is the Group SCAD \citep{GroupSCAD,GroupSCAD2}, where $p_{\lambda_{n,k}}(\theta)$ denotes the SCAD penalty defined above.
  For $\| \btheta_{G_g} \|$ small, Group SCAD coincides with Group Lasso, while for $\| \btheta_{G_g} \| > a \lambda_{n,g}$, the generalized gradient is $\bnull$, leading to unbiased estimates.

  A similar extension of MCP to Group MCP \citep{Breheny2009} is also possible.
\end{exmp}
Except for Group Lasso and Group SCAD, all presented penalties are coordinate-separable and can also be used with a vector of tuning parameters $\blambda_n$, i.e., parameter specific $\lambda_{n, k}$.
 
\section{Proofs of Main Results}\label{sec:ProofsTh}
To simplify the notation in the following proofs and results, we shall use the following notation:
\begin{align*}
  \P_n f = \frac 1 n \sumin f(\bX_i), \qquad P f = \frac 1n \sumin \E[f(\bX_i)].
\end{align*}

\subsection{Proof of \autoref{theorem1}}

We first show that the sets $\Theta_n$ contain a solution of the estimating equation \eqref{eq:estim1} with probability tending to 1.
From an extension of the intermediate value theorem in \citet[Theorem 2.3]{IVT}, it follows that if
\[
  \sup_{\| \bu \| = 1} \langle r_n C \bu, \P_n \phi(\btheta^* + r_n C \bu) \rangle \le 0
\]
holds, there is a solution $\hbtheta$ of $\P_n \phi(\btheta)  = \0$ that satisfies $\lVert \hbtheta - \btheta^* \rVert \le r_n C$.
We show that by choosing $C$ large enough, the probability that
\begin{equation}
  \label{eq:th1proof}
  (r_n C)^{-1} \sup_{\| \bu \| = 1} \langle \bu, \P_n \phi(\btheta^* + r_n C \bu) \rangle \le -c < 0
\end{equation}
holds for some $c > 0$ becomes arbitrarily close to $1$.
We have
\begin{align*}
   & \quad  (r_n C)^{-1} \sup_{\| \bu \| = 1}\langle \bu, \P_n \phi(\btheta^* + r_n C \bu) \rangle \\
   & \le (r_n C)^{-1} \sup_{\| \bu \| = 1} \langle \bu, \P_n \phi(\btheta^* ) \rangle
  + (r_n C)^{-1} \sup_{\| \bu \| = 1}  \langle \bu, \P_n [\phi(\btheta^* + r_n C \bu ) - \phi(\btheta^*)] \rangle.
\end{align*}
The first term is of order $O_p(1/C)$, since
\begin{equation}\label{eq:Th1dep}
 \sup_{\| \bu \| = 1} | \langle \bu, \P_n \phi(\btheta^* ) \rangle | = \|\P_n \phi(\btheta^* )  \| = O_p(n^{-1/2} \sqrt{\mathrm{tr}(I(\btheta^*))}) = O_p(r_n)
\end{equation}
  
by \cref{lem:Cons1}.
Choosing $C$ large enough, it suffices that the second term remains below some $-c$ with probability going to $1$.
It holds that
\begin{align}\label{eq:Th1dep2}
  (r_n C)^{-1}  \langle \bu, \P_n [\phi(\btheta^* + r_n C \bu ) - \phi(\btheta^*)] \rangle
   & \le \P_n [\bu^\top   H_n \bu ]   \\                                            
   & =  P [\bu^\top   H_n \bu ] + (\P_n - P) [\bu^\top   H_n \bu ], \nonumber
\end{align}
with $H_n$ as defined in assumption \ref{A:Cons1}\ref{eq:Hn-def}. Hence, 
$$
  (r_n C)^{-1} \sup_{\| \bu \| = 1}  \langle \bu, \P_n [\phi(\btheta^* + r_n C \bu ) - \phi(\btheta^*)] \rangle
  \le \lambda_{\max}\lf( \frac 1 n\sum_{i = 1}^n\E[H_n(\bX_i)]\ri) + \|  (\P_n - P) H_n\|.
$$
By assumption \ref{A:Cons1}\ref{eq:Hn-identifiable}, we have $ \lambda_{\max}(n^{-1}\sum_{i = 1}^n\E[H_n(\bX_i)]) \le -c$ for some $c > 0$ and large enough $n$, and \cref{lem:Hn-convergence} gives $\|  (\P_n - P) H_n\| = o_p(1)$. This proves \eqref{eq:th1proof}.

We now show that every solution in $\Theta_n$ must satisfy $\|\hbtheta - \btheta^*\| \le Cr_n$ for some $C < \infty$, with probability tending to 1. 
Suppose this is not the case and define $C_n^{-1} = r_n /  \|\hbtheta - \btheta^*\| = o_p(1)$.
Then, we can write $\hbtheta = \btheta^* + r_n C_n \hbu$ with $\| \hbu \| = 1$.
It holds that
\begin{align*}
 0 & = (r_n C_n)^{-1}  \langle \hbu, \P_n \phi(\btheta^* + r_n C_n \hbu ) \rangle  \le \sup_{\|\bu\| = 1} (r_n C_n)^{-1} \langle \bu, \P_n \phi(\btheta^* + r_n C_n \bu ) \rangle\\
 & \le O_p(C_n^{-1}) -c + o_p(1) = -c + o_p(1),
\end{align*}
where the second inequality follows from the above arguments.
Hence, $\hbtheta$ cannot be a solution with probability tending to 1.
 \subsection{Proof of \autoref{theorem1-uniqueness}}

The claim is trivial when no solution exists. Otherwise, let $\hbtheta$ be any solution to the estimating equation \eqref{eq:est_eq}. By \cref{theorem1}, we may assume that $\|\hbtheta - \btheta^*\| \le r_n C$ for some $C \in (0, \infty)$ and $n$ large enough.
Suppose there is another solution $\hbtheta + \bu$.
By the strengthened assumption \eqref{eq:Hn-def-unique},
\begin{align*}
  \langle \bu, \P_n \phi(\hbtheta + \bu) \rangle
   =   \langle \bu, \P_n \phi(\hbtheta + \bu) \rangle -   \langle \bu, \P_n \phi(\hbtheta) \rangle
   & \le  \P_n[\bu^\top H_n \bu  ]                                          \\
   & =   P[\bu^\top H_n \bu ] + (\P_n - P) [\bu^\top H_n \bu   ]              \\
   & \le\|\bu\|^2 (-c +  o_p(1)) ,
\end{align*}
uniformly on the set $\{\bu \colon \|\hbtheta + \bu - \btheta^*\| \le r_n C\}$ using \ref{A:Cons1} and \cref{lem:Hn-convergence}. The right-hand side is strictly negative on the subset where $\bu \neq \bnull$ with probability tending to 1, so it must hold that $\bu = \0$.

 \subsection{Proof of \autoref{theorem2}}
We have
\begin{align*}
\0 & = \P_n \phi(\hbtheta)
= \P_n \phi(\btheta^*) + \P_n [ \phi(\hbtheta) - \phi(\btheta^*)]\\
& =  \P_n \phi(\btheta^*) + P [ \phi(\hbtheta) - \phi(\btheta^*)] + (\P_n - P) [ \phi(\hbtheta) - \phi(\btheta^*)]\\
& =  \P_n \phi(\btheta^*) + \bar J (\btheta^*, \hbtheta) (\hbtheta - \btheta^*)   + (\P_n - P) [ \phi(\hbtheta) - \phi(\btheta^*)], 
\end{align*}
where $\bar J (\btheta^*, \btheta) \coloneqq \int_0^1 J(\btheta^* + t(\btheta - \btheta^*)) d t$.
Rearranging and adding terms gives
\[
- J(\btheta^*)(\hbtheta - \btheta^*)  = \P_n \phi(\btheta^*) + [\bar J (\btheta^*, \hbtheta) - J(\btheta^*)] (\hbtheta - \btheta^*)   + (\P_n - P) [ \phi(\hbtheta) - \phi(\btheta^*)]
\]
and
\begin{align}\label{eq:Th3dep}
- \sqrt{n} A_n J(\btheta^*)(\hbtheta - \btheta^*) 
= & \sqrt{n} A_n \P_n \phi(\btheta^*) \nonumber \\
& + \sqrt{n} A_n  [\bar J (\btheta^*, \hbtheta) - J(\btheta^*)] (\hbtheta - \btheta^*)  \\
& + \sqrt{n} A_n (\P_n - P) [\phi(\hbtheta ) - \phi(\btheta^*)].\nonumber
\end{align}
The second and the third term are negligible, since
\[
\sqrt{n} A_n [\bar J (\btheta^*, \hbtheta) - J(\btheta^*)] (\hbtheta - \btheta^*) 
= o_p\lf(\sqrt{n}\, \frac{1}{\sqrt{n}r_n}\,r_n \ri) = o_p(1)
\]
by assumption \ref{A:Asymp}, and \cref{lem:AsN1} yields 
\[
 \sqrt{n} A_n(\P_n - P)  [\phi(\hbtheta ) - \phi(\btheta^*)] =o_p(1).
\]
It remains to show a central limit theorem for 
\[
\sqrt{n} A_n \P_n \phi(\btheta^*)  = \sumin  \frac{1}{\sqrt{n}} A_n \phi_{i}(\btheta^*) \coloneq \sumin \bY_{i}.
\]
Since
\begin{align*}
\sumin \E \left[ \lVert \bY_{i} \rVert^2 \mathbbm{1}\{\lVert \bY_{i} \rVert > \eps \}  \right] 
& \le \sumin \E \left[ \lVert \bY_{i} \rVert^2 \mathbbm{1}\{\lVert \bY_{i} \rVert > \eps \} \| \bY_{i} \|^2/\eps^2  \right] \leq \sumin \E\left[ \lVert \bY_{i} \rVert^4 \right]/\eps^2,
\end{align*}
and $ \E[ \lVert \bY_{i} \rVert^4] = n^{-2}\E[ \| A_n \phi_{i}(\btheta^*) \|^4 ] = o(n^{-1})$ for all $i = 1, \ldots, n $, by \ref{A:Cons2}, we have
\[
\sumin \mathbb{E} \left[ \lVert \bY_{i} \rVert^2 \mathbbm{1}\{\lVert \bY_{i} \rVert > \eps \}  \right] \to 0 \text{ for every } \eps > 0.
\]
Since $\sumin \mathbb{E}[\bY_{i}]=\boldsymbol{0}$ for all $i=1, \ldots, n$ and 
\begin{align*}
\sum_{i=1}^n \mathrm{Cov}(\bY_{i}) 
 = \frac 1 n \sumin  \mathrm{Cov}[A_n \phi_{i}(\btheta^*)] 
 =  \frac 1 n A_n\sumin  \mathrm{Cov}[ \phi_{i}(\btheta^*)]  A_n^\top 
=  A_n  I(\btheta^*)  A_n^\top
\to \Sigma,
\end{align*}
the conditions of the Lindeberg-Feller central limit theorem \citep[Section 2.8]{vdV2} are satisfied, and we obtain 
\[
\sqrt{n} A_n J(\btheta^*)  (\hbtheta - \btheta^*) \to_d \Ncal(\0, \Sigma).
\]
 
\subsection{Proof of \autoref{theorem3}}

Let $\hbtheta \in \Theta_n$ be any solution of \eqref{eq:Def_M}, which we write as $\hbtheta = \btheta^* + \bu$.
Since $\hbtheta$ is a solution of \eqref{eq:Def_M}, there is a $\bz \in \partial p_{\blambda_n}(\btheta^* + \bu)$ such that
\begin{align}
  0 & = \langle \bu, \P_n \phi(\btheta^* + \bu) - \bz \rangle    \le  \langle \bu, \P_n  \phi(\btheta^*) \rangle +  \langle \bu,  \P_n H_n \bu \rangle - \langle \bu,\bz \rangle.   \label{eq:penalized-rate-proof} 
\end{align}
Using H\"older's inequality, \cref{lem:eta_n} and \ref{A:phi-moments}, the first term in \eqref{eq:penalized-rate-proof} can be bounded by
\begin{align*}
  \langle \bu, \P_n  \phi(\btheta^*) \rangle \le \|\bu\|_1  \eta_n \le \sqrt{\nu_n}  \|\bu\|_2 \eta_n
\end{align*}
with probability tending to 1.
For the second term in \eqref{eq:penalized-rate-proof}, \ref{A:phi-H-penalty} and H\"older's inequality yield
\begin{align*}
  \langle \bu,  \P_n H_n \bu \rangle &=  \langle \bu,  P H_n \bu \rangle +  \langle \bu,  (\P_n - P) H_n \bu \rangle \\
  &\le -c\|\bu\|^2 + \nu_n \|\bu\|^2 \max_{1 \le j, k \le p_n} |  (\P_n - P) H_{n, j, k} |  = -c\|\bu\|^2 + \|\bu\|^2 o_p(1),
\end{align*}
where the last step follows from  \cref{lem:Hn-convergence-pen}.
For the third term in \eqref{eq:penalized-rate-proof}, H\"older's inequality gives
\begin{align*}
  -\langle \bu, \bz \rangle 
  &\le \sqrt{\nu_n} \|\bu\|_2 \bar b_n.
\end{align*}
Altogether, we have shown
\begin{align*}
  0 \le  \|\bu\|_2  \sqrt{\nu_n}  (\eta_n + \bar b_n) - \|\bu\|_2^2[c + o_p(1)].
\end{align*}
Rearranging terms gives
\begin{align*}
  \|\hbtheta - \btheta^*\|_2 = \|\bu\|_2 \le  \frac{\sqrt{\nu_n} (\eta_n + \bar b_n)}{c + o_p(1)} = O_p\left(\sqrt{\nu_n}(\eta_n + \bar b_n)\right).
\end{align*}

\subsection{Proof of \autoref{theorem4}}

The proof is split in two steps:
\begin{enumerate}
  \item Show that there is a solution $\hbtheta_{(1)}$ to 
  \[
  \Phi_n((\btheta_{(1)}, \0))_{(1)} \in \partial p_{\blambda_n}((\btheta_{(1)}, \0))_{(1)} \in \R^{s_n}
  \]
  with $\| \hbtheta_{(1)} - \btheta_{(1)}^*\| = O_p(\tilde r_n)$.
  \item Show that $\hbtheta = (\hbtheta_{(1)}, \0)$ is also a valid solution to 
  \[
  \Phi_n(\hbtheta)_{(2)} \in \partial p_{\blambda_n}(\hbtheta)_{(2)}.
  \]
  Since $\partial p_{\blambda_n}(\btheta)_{G_g} \supseteq B(\0_{ | G_g |}, \lambda_{n,g})$ for all $g \in I_{(2)}$ by \ref{A:Penalty2}, 
  it suffices to verify 
  \[
  \max_{g \in I_{(2)}} \lambda_{n,g}^{-1} \ \| \Phi_n(\hbtheta)_{G_g}  \|_2  \le 1.
  \]
\end{enumerate}
Together this implies that $\hbtheta$ is a valid solution to the full problem \eqref{eq:Def_M}. 

\myparagraph{Step 1} 
Similar as in the proof of \cref{theorem1}, it suffices to show that
\begin{equation}
  \label{eq:pr_th3}
  (\tilde r_n C)^{-1} \sup_{\| \bu \| = 1, \bu_{(2)} = \0} \langle  \bu_{(1)}, \Phi_n (\btheta^* + \tilde r_n C \bu)_{(1)} - \nabla_{\btheta_{(1)}} p_{\blambda_n}(\btheta^* + \tilde r_n C\bu) \rangle \le -c < 0,
\end{equation}
for large enough $C$ and high probability.
The left-hand side of \eqref{eq:pr_th3} is upper bounded by 
\begin{align}
\begin{split}
\label{eq:pr_th3_2}
&  (\tilde r_n C)^{-1} \sup_{\| \bu \| = 1, \bu_{(2)} = \0}  \langle  \bu_{(1)}, \P_n \phi(\btheta^* + \tilde r_n C \bu)_{(1)} \rangle  \\
 & - (\tilde r_n C)^{-1} \inf_{\| \bu \| = 1, \bu_{(2)} = \0} \langle  \bu_{(1)}, \nabla_{\btheta_{(1)}} p_{\blambda_n}(\btheta^* + \tilde r_n C\bu) \rangle  
 \end{split}
\end{align}
For the first term, one can proceed similarly to the proof of \cref{theorem1} and show that, using \ref{A:Cons1} and choosing $C$ large enough, 
\[
 (\tilde r_n C)^{-1} \sup_{\| \bu \| = 1, \bu_{(2)} = \0} \langle \bu_{(1)}, \P_n \phi(\btheta^* + \tilde r_n C \bu)_{(1)} \rangle \le -c < 0
\]
holds with arbitrarily high probability. It remains to show that the second term in \eqref{eq:pr_th3_2} is sufficiently small.
A Taylor expansion yields
\begin{align*}
&\quad (\tilde r_n C)^{-1} \langle  \bu_{(1)}, \nabla_{\btheta_{(1)}} p_{\blambda_n}(\btheta^* + \tilde r_n C\bu) \rangle \\
& = (\tilde r_n C)^{-1} \langle  \bu_{(1)}, \nabla_{\btheta_{(1)}} p_{\blambda_n}(\btheta^*) \rangle 
+  \langle  \bu_{(1)}, \bar p_{\blambda_n}(\btheta^*, \btheta^* + \tilde r_n C\bu) \,  \bu_{(1)} \rangle ,
\end{align*}
where $\bar p_{\blambda_n}(\btheta^*, \btheta) \coloneqq \int_0^1 \nabla^2_{\btheta_{(1)}} p_{\blambda_n}(\btheta) \rvert_{\btheta = \btheta^* + t(\btheta - \btheta^*)} dt$.
Using the Cauchy-Schwarz inequality, we obtain
\begin{align*}
 (\tilde r_n C)^{-1}  \sup_{\| \bu \| = 1, \bu_{(2)}= \0} \left| \langle  \bu_{(1)}, \nabla_{\btheta_{(1)}} p_{\blambda_n}(\btheta^*) \rangle  \right|
&\le  (\tilde r_n C)^{-1}  \sup_{\| \bu \| = 1, \bu_{(2)}= \0} \|  \nabla_{\btheta_{(1)}} p_{\blambda_n}(\btheta^*)\| \| \bu \| \\
& \le (\tilde r_n C)^{-1}  \sqrt{s_n} \|  \nabla_{\btheta_{(1)}} p_{\blambda_n}(\btheta^*)\|_{\infty} \\
&= ( \tilde r_n C)^{-1}  \sqrt{s_n} b_n^* =  O(1/C),
\end{align*}
which becomes negligible by choosing $C$ large enough.
For the second term, \ref{A:Penalty1} gives 
\begin{align*}
  \sup_{\| \bu \| = 1, \bu_{(2)} = \0} \langle  \bu_{(1)},  \bar p_{\blambda_n}(\btheta^*, \btheta^* + \tilde r_n C\bu) \,  \bu_{(1)} \rangle  = o(1),
\end{align*}
which proves \eqref{eq:pr_th3}. 

\myparagraph{Step 2}
\cref{lem:sparsity} yields
\begin{align}\label{eq:step2Th5}
& \max_{g \in I_{(2)}} \lambda_{n,g}^{-1} \ \| \Phi_n(\hbtheta)_{G_g}  \|_2  \\
 & \, \le \quad    \max_{g \in I_{(2)}} \lambda_{n,g}^{-1} \ \| \bar J(\btheta^*, \hbtheta)_{G_g,(1)} \;\bar J(\btheta^*, \hbtheta)^{-1}_{(1)}  \;  \nabla_{\btheta_{(1)}} p_{\blambda_n}(\hbtheta) \|_2  +  \max_{g \in I_{(2)}} \lambda_{n,g}^{-1} \ \|   \P_n \phi(\btheta^*)_{G_g} \|_2 \nonumber \\
 &\quad \,  +   \max_{g \in I_{(2)}} \lambda_{n,g}^{-1} \ \| \underbrace{ \bar J(\btheta^*, \hbtheta)_{G_g,(1)} \;\bar J(\btheta^*, \hbtheta)^{-1}_{(1)}  \; \P_n \phi(\btheta^*)_{(1)} }_{\eqcolon \bv_{g}}  \|_2   + o_p(1) .\nonumber
\end{align}
The first term is smaller or equal than some $\alpha \in [0,1)$ by \ref{A:lambda}.
\cref{lem:eta_n} implies $\| \P_n \phi(\btheta^*)  \|_{\infty} \le \eta_n$ with probability tending to 1.
On this event, the condition on $\lambda_{n,g}$ for $g \in I_{(2)}$ in \ref{A:lambda2} gives, for the second term,
\[
\max_{g \in I_{(2)}} \lambda_{n,g}^{-1} \ \| \P_n \phi(\btheta^*)_{G_g}  \|_2 \le  \max_{g \in I_{(2)}} \frac{\sqrt{| G_g | }}{\lambda_{n,g}}   \ \| \P_n \phi(\btheta^*)  \|_\infty
\le \frac{1-\alpha}{4} \frac{\| \P_n \phi(\btheta^*)  \|_\infty}{\eta_n} \le \frac{1-\alpha}{4}.
\]
For the third term, we obtain using $\| A \bx \|_2 = (\| A \bx \|_2 / \| \bx \|_{\infty}) \| \bx \|_{\infty} \le  \| \bx \|_{\infty} \sup_{\| \bx \|_\infty \le 1 } \| A \bx \|_2$,
\begin{align*}
  & \max_{g \in I_{(2)}} \lambda_{n,g}^{-1} \ \| \bar J(\btheta^*, \hbtheta)_{G_g,(1)} \;\bar J(\btheta^*, \hbtheta)^{-1}_{(1)} \; \P_n \phi(\btheta^*)_{(1)} \|_2 \\
  & \le \max_{g \in I_{(2)}} \lambda_{n,g}^{-1} \ \sup_{\| \bx \|_\infty \le 1}\| \bar J(\btheta^*, \hbtheta)_{G_g,(1)} \;\bar J(\btheta^*, \hbtheta)^{-1}_{(1)}  \bx \|_2 \ \| \P_n \phi(\btheta^*)_{(1)} \|_\infty \\
  & \le \frac{1-\alpha}{4}\ \frac{\| \P_n \phi(\btheta^*)  \|_\infty}{\eta_n}  \max_{g \in I_{(2)}}\ \frac{\sup_{\| \bx \|_\infty \le 1}\| \bar J(\btheta^*, \hbtheta)_{G_g,(1)} \;\bar J(\btheta^*, \hbtheta)^{-1}_{(1)}  \bx \|_2}{ J_{n,g}  } \le \frac{1-\alpha}{4},
\end{align*}
again using the condition on $\lambda_{n,g}$ in \ref{A:lambda2}
Together, we have shown
\[
\max_{g \in I_{(2)}} \lambda_{n,g}^{-1} \ \| \Phi_n(\hbtheta)_{G_g}  \|_2 \le \alpha +  \frzwei (1-\alpha) + o_p(1) = \frzwei (1+\alpha) + o_p(1) < 1
\]
with probability going to $1$.

\subsection{Proof of \autoref{theorem5}}

Suppose there is a solution $\hbtheta \in \Theta_n$ and a further solution $\wt \btheta =\hbtheta + \wt \bu \in \Theta_n$.
From \cref{theorem3} we know that $ \lVert \hbtheta - \btheta^* \rVert = O_p\left(\sqrt{\nu_n}(\eta_n + \bar b_{n}) \right)$ and $ \lVert \wt \btheta - \btheta^* \rVert = O_p\left(\sqrt{\nu_n}(\eta_n + \bar b_{n}) \right)$.
Similar to the proof of \cref{theorem3}, with some $\bz \in \partial p_{\blambda_n}( \btheta)$ and some $\tilde \bz \in \partial p_{\blambda_n}(\wt \btheta)$, it must hold that
\begin{align*}
  0
   & = \langle \wt \bu_{}, \P_n \phi(\wt \btheta) - \tilde \bz \rangle     = \langle \wt \bu_{}, \P_n [\phi(\wt \btheta) -\phi(\hbtheta)] \rangle -  \langle \wt \bu, \tilde \bz - \bz \rangle \\
   & \le -c \|\wt \bu\|_2^2 + \nu_n \|\wt \bu\|_2^2 \max_{1 \le j, k \le p_n} |  (\P_n - P) H_{ j, k} | +  \mu_n \|\wt \bu\|_2^2      \le  \|\wt \bu\|_2^2[-c +  \mu_n + o_p(1)],
\end{align*}
where we used \ref{A:phi-H-penalty} and \ref{A:Penalty3} in the first inequality, and \cref{lem:Hn-convergence-pen} in the second.
Since $\mu_n < c$ asymptotically, it must hold that $\|\wt \bu\|_2 = 0$ or, equivalently, $\wt \btheta = \hbtheta$ with probability tending to 1.
 \subsection{Proof of \autoref{theorem6}}

Similar to the proof of \cref{theorem2}, we obtain, with $\bar J (\btheta^*, \btheta)_{(1)} \coloneqq \int_0^1 J(\btheta^* + t(\btheta - \btheta^*))_{(1)} dt$,
\begin{align*}
\0  &=  \P_n \phi(\hbtheta)_{(1)} - \nabla_{\btheta_{(1)}} p_{\blambda_n}(\hbtheta) \\
 &=  \P_n \phi(\btheta^*)_{(1)} + \bar J (\btheta^*, \hbtheta)_{(1)} (\hbtheta_{(1)} - \btheta_{(1)}^*)   + (\P_n - P) [ \phi(\hbtheta)_{(1)} - \phi(\btheta^*)_{(1)}] \\
 & \quad  -  \nabla_{\btheta_{(1)}} p_{\blambda_n}(\btheta^*) - \bar p_{\blambda_n}(\btheta^*, \hbtheta) (\hbtheta_{(1)} - \btheta_{(1)}^*) ,
\end{align*}
where $\bar p_{\blambda_n}(\btheta^*, \btheta) = \int_0^1 \nabla^2_{\btheta_{(1)}} p_{\blambda_n}(\btheta) \rvert_{\btheta = \btheta^* + t(\btheta - \btheta^*)} dt$.
It then holds that
\begin{align*}
 \sqrt{n} A_n \left[-  J(\btheta^*)_{(1)}(\hbtheta_{(1)} - \btheta_{(1)}^*) +    \nabla_{\btheta_{(1)}} p_{\blambda_n}(\btheta^*) \right] 
 & =  \sqrt{n} A_n  \P_n \phi(\btheta^*)_{(1)}\\
 & \quad  + \sqrt{n} A_n  [ \bar J (\btheta^*, \hbtheta)_{(1)}  -J(\btheta^*)_{(1)}](\hbtheta_{(1)} - \btheta_{(1)}^*)  \\
&  \quad +\sqrt{n} A_n  (\P_n - P) [ \phi(\hbtheta)_{(1)} - \phi(\btheta^*)_{(1)}] \\
 &  \quad  - \sqrt{n} A_n \bar p_{\blambda_n}(\btheta^*, \hbtheta)(\hbtheta_{(1)} - \btheta_{(1)}^*).
\end{align*}
Adapting the proof of \cref{theorem2}, one can show a central limit theorem for the first term and that the second and third term are of order $o_p(1)$ by \ref{A:Asymp} and \cref{lem:AsN1}.
For the last term, we have
\[
 \sqrt{n} A_n  \bar p_{\blambda_n}(\btheta^*, \hbtheta)(\hbtheta_{(1)} - \btheta_{(1)}^*) 
 = o_p(1)
\]
by assumption \ref{A:Penalty4} and $\| A_n \| = O(1)$, which concludes the proof.
   
  \subsection{Proof of \autoref{th:dependence}}
  
  \textbf{\cref{theorem1}}:
  In \eqref{eq:Th1dep}, replace $\| \P_n \phi(\btheta^*) \|$ by
  \[
    \| \P_n \phi(\bX_i^*; \btheta^*) \| + \sqrt{p_n} \max_{1 \le k \le p_n} | \P_n [ \phi(\btheta^*)_k - \phi(\bX_i^*; \btheta^*)_k  ] | = \| \P_n \phi(\bX_i^*; \btheta^*) \|  + o_p(r_n),
  \]
  which follows from \cref{lem:dep2} with $\eps_N = r_n n / \sqrt{p_n}$, $\Fcal = \{ \phi(\bx; \btheta^*)_k, k =1,\ldots, p_n\}$ and \eqref{eq:assumpDep}.
  In \eqref{eq:Th1dep2}, replace $\P_n [\bu^\top H_n(\bX_i) \bu]$ by 
\[
  \P_n [\bu^\top H_n(\bX_i^*) \bu] + \P_n [\bu^\top (H_n(\bX_i) - H_n(\bX_i^*) ) \bu] = \P_n [\bu^\top H_n(\bX_i^*) \bu] + o_p(1),
  \]
  which follow from \cref{lem:dep2} with $\eps_N = C n $ with some $C < \infty$ such that $C n / (n - L_N) \ge B_n$ (which exists due to the additional assumption on $B_n$), $\Fcal = \{ \bu^\top H_n(\bx) \bu : \| \bu \| \le 1 \}$ and \ref{A:Cons1}\ref{eq:Hn-bounds}.
  \cref{theorem1-uniqueness} follows analogously.

  \textbf{\cref{theorem2}}: In \eqref{eq:Th3dep}, replace $\sqrt{n} A_n \P_n \phi(\btheta^*)$ by
  \[
  \sqrt{n} A_n \P_n \phi(\bX_i^*; \btheta^*) + \sqrt{n} A_n \P_n [\phi(\btheta^*) - \phi(\bX_i^*; \btheta^*)]
  = \sqrt{n} A_n \P_n \phi(\bX_i^*; \btheta^*) + o_p(1),
  \]
  which follows from \cref{lem:dep2} with $\eps_N = \sqrt{n}$, $\Fcal = \{  \langle \bu, A_n \phi(\bx; \btheta^*) \rangle : \| \bu \| \le 1 \}$ and
  \begin{align*}
   \sum_{i=L_N + 1}^{n} \Pr\left(\| A_n \phi_i(\btheta^*) \| >  \sqrt{n} /(n - L_N)\right) & \le \sum_{i=L_N + 1}^{n} \frac{(n - L_N)^4 \E [\| A_n \phi_i(\btheta^*) \|^4] }{n^2} \\ & \overset{\ref{A:Cons2}}{=} o((n - L_N)^5/n ) = o(1).
  \end{align*}
  $\sqrt{n} A_n \P_n [\phi(\hbtheta) - \phi(\btheta^*)]$ in \eqref{eq:Th3dep} is replaced by
  \[
  \sqrt{n} A_n \P_n [\phi(\bX_i^*; \hbtheta) - \phi(\bX_i^*; \btheta^*)] + \sqrt{n} A_n \P_n [\phi(\hbtheta) - \phi(\btheta^*) - (\phi(\bX_i^*; \hbtheta) - \phi(\bX_i^*; \btheta^*) )].
  \]
  In the second term, we may replace $\phi(\bX; \btheta), \phi(\bX^*;\btheta)$ by $\tilde \phi(\bX; \btheta), \tilde \phi(\bX^*;\btheta)$ from \ref{A:Asymp}, since $\E[\phi(\bX; \btheta)] = \E[\phi(\bX^*; \btheta)]$.
  Now this term is $o_p(1)$ by \cref{lem:dep2} with $\eps_N = C \sqrt{n}$ with some $C < \infty$ such that $C \sqrt{n}/(n-L_N) \ge D_n$ (which exists due to the additional assumption on $D_n$), $\Fcal = \{ \langle \bu, A_n [\tilde \phi(\bx; \btheta) - \tilde \phi(\bx; \btheta^*)] \rangle : \| \bu \| \le 1, \btheta \in \Theta_n\}$ and, using the second condition in \ref{A:Asymp} with any fixed $\tilde C < \infty$,
  \begin{align*}    
 &  \sum_{i=L_N + 1}^{n} \Pr\left(\sup_{\btheta \in \Theta_n} \| A_n [\tilde\phi_i(\btheta) - \tilde\phi_i(\btheta^*)] \| > C \sqrt{n}/(n - L_N) \right)
  \\ &  \le \sum_{i=L_N + 1}^{n} \Pr\left( \sup_{\| \bu \| \le r_n \tilde C} \| A_n [\tilde\phi_i(\btheta + \bu) - \tilde \phi_i(\btheta^*)]  \| > D_n\right) = o(1).
  \end{align*}

  \textbf{\cref{theorem3}}: In \eqref{eq:penalized-rate-proof}, replace $ \langle \bu, \P_n  \phi(\btheta^*) \rangle$ by 
  \[
   \langle \bu, \P_n  \phi(\bX_i^*; \btheta^*) \rangle +  \langle \bu, \P_n [\phi(\btheta^*) - \phi(\bX_i^*; \btheta^*)]  \rangle 
  \le \langle \bu, \P_n  \phi(\bX_i^*; \btheta^*) \rangle + \sqrt{\nu_n} \| \bu \|_2\, o_p(\eta_n),
  \]
  which follows from \cref{lem:dep2} with $\eps_N =  \eta_n n$, $\Fcal = \{ \phi(\bx; \btheta^*)_k : k = 1, \ldots, p_n \}$ and \eqref{eq:assumpDep}. 
  $\langle \bu,  \P_n H_n \bu \rangle$ in \eqref{eq:penalized-rate-proof} is replaced by
  \[
    \langle \bu,  \P_n H_n(\bX_i^*) \bu \rangle +  \langle \bu,  \P_n [H_n(\bX_i) - H_n(\bX_i^*)] \bu \rangle
     \le  \langle \bu,  \P_n H_n(\bX_i^*) \bu \rangle +  \| \bu \|_2^2 \, o_p(1),
  \]
  which follows from \cref{lem:dep2} with $\eps_N = C n / \nu_n$ with some $C < \infty$ such that $C n / (\nu_n (n - L_N)) \ge \tilde B_n$ (which exists due to the additional assumption on $\tilde B_n$) and $\Fcal = \{ H_n(\bx)_{j,k} : 1 \le j,k \le p_n \}$ and, using the second condition in \ref{A:phi-H-penalty}\ref{eq:Hn-bounds2},
  \[
  \sum_{i=L_N + 1}^{n} \Pr\left( \max_{1 \le j,k \le p_n} | H_n(\bX_i) | >  \frac{C n }{\nu_n (n - L_N)} \right)
  \le \sum_{i=L_N + 1}^{n} \Pr\left( \max_{1 \le j,k \le p_n} | H_n(\bX_i) | >  \tilde B_n \right) = o(1).
  \]
  \cref{theorem5} follows analogously.

  \textbf{\cref{theorem4}}: Recall that $\lambda_{n,g}^{-1} \le (1-\alpha)/(4  \eta_n \sqrt{ |G_g|})$ for all $g \in I_{(2)}$ by \ref{A:lambda2}.
  In \eqref{eq:step2Th5}, replace $\max_{g \in I_{(2)}} \lambda_{n,g}^{-1} \| \Phi_n (\hbtheta)_{G_g} \|_2$ by 
  \begin{align*}
    & \max_{g \in I_{(2)}} \lambda_{n,g}^{-1} \| \P_n \phi (\bX_i^*; \hbtheta)_{G_g} \|_2
    + \max_{g \in I_{(2)}} \lambda_{n,g}^{-1} \sqrt{| G_g |} \max_{k \in G_g} | \P_n [\phi(\hbtheta)_k - \phi(\bX_i^*; \hbtheta)_k] | \\
    & \le \max_{g \in I_{(2)}} \lambda_{n,g}^{-1} \| \P_n \phi (\bX_i^*; \hbtheta)_{G_g} \|_2 + \frac{1-\alpha}{4 \eta_n} \max_{k = s_n+1, \ldots, p_n }| \P_n [\phi(\hbtheta)_k - \phi(\bX_i^*; \hbtheta)_k] | \\
    & =  \max_{g \in I_{(2)}} \lambda_{n,g}^{-1} \| \P_n \phi (\bX_i^*; \hbtheta)_{G_g} \|_2 + o_p(1),
  \end{align*}
  which follows from \cref{lem:dep2} with $\eps_N = \eta_n n$, $\Fcal = \{ \phi(\bx; \btheta)_k : k = s_n + 1, \ldots, p_n, \btheta \in \Theta'_n \}$ and \eqref{eq:assumpDep}. 
\subsection{Proofs for \texorpdfstring{\cref{sec:RSC-failure}}{the RSC counterexample}}

\label{sec:RSC-failure-proof}

In the following, write $\bar\bY = n^{-1} \sum_{i=1}^n \bY_i$ and $\bar\xi_g = n^{-1} \sum_{i=1}^n \xi_{i,g}$.

\begin{proof}[Proof of \cref{cor:RSC-failure}]
  We first verify condition \ref{A:phi-H-penalty} for the choice $H_n(X) = -X$ on the cone $\Theta^*(\nu_n)$. Condition (i) holds with equality and (ii) follows from $\E[X_i] = I_{p_n}$.
  Condition (iii) holds  because 
  \begin{align*}
    \max_{j, k} \frac 1 n \sumin \E[H_n(X_i)_{j, k}^2] = O(1),
  \end{align*}
  and, setting $\tilde B_n = \ln (n M_n)$, it holds
  \begin{align*}
    \Pr\left(\max_{1\le j, k \le p_n}|H_n(X_i)_{j, k}| \ge \tilde B_n\right) \le  \Pr\left(1 + \max_{1\le g\le M_n}|\xi_g| \ge \tilde B_n\right)  = o(1/n),
  \end{align*}
  using Gaussianity of $X_i$.
Similarly, \ref{A:phi-moments} holds with $\eta_n = O(\sqrt{\ln p_n / n})$ by  the Gaussianity of $\bY_i$.
Hence, \cref{theorem3} applies on $\Theta_n\cap\Theta^*(\nu_n)$ and, since $\bar b_n=\lambda_n=O(\eta_n)$ for the Lasso penalty, shows that any solution in this set satisfies
\[
  \|\hbtheta\|_2=O_p\left(\sqrt{\nu_n}\eta_n\right).
\]
Moreover, \ref{A:Penalty3} holds with $\mu_n=0$, and \eqref{eq:Hn-def-unique-pen} holds with equality.
Thus, \cref{theorem5} implies that there is at most one solution in $\Theta_n\cap\Theta^*(\nu_n)$ with probability tending to one. Finally, for a sufficiently large $C_\lambda$,
\begin{align*}
  \|\Phi_n(\bnull)\|_\infty = \|\bar \bY_n \|_\infty < \lambda_n, 
\end{align*}
with probability tending to one, so $\0$ indeed solves \eqref{eq:RSC-failure-equation}.
\end{proof}

\begin{proof}[Proof of \cref{lem:RSC-failure}]
For $g=1,\ldots,M_n$, define $\bu_g=m_n^{-1}\bm 1_{S_g}$.
Then $\|\bu_g\|_1=1$, $\|\bu_g\|_2^2=m_n^{-1}$, and
\begin{align*}
  \left\langle \bu_g, \Phi_n(\0) - \Phi_n(\bu_g) \right\rangle
  = \bu_g^\top \left(\frac 1 n \sumin X_i\right) \bu_g
  = \frac1{m_n} + \bar\xi_g.
\end{align*}
Consequently, the RSC event
  \begin{align*}
    \left\langle \bu, \Phi_n(\0) - \Phi_n(\bu) \right\rangle
    \ge c\|\bu\|_2^2 - c_1 \|\bu\|_1^2 \eta_n^2,
    \qquad \forall \|\bu\|_1 \le 1,
  \end{align*}
implies
\begin{align*}
    \min_{1\le g\le M_n} \bar \xi_g    \ge \frac{c-1}{m_n}-c_1\frac{\ln p_n}{n}.
\end{align*}
The variables $\sqrt n \bar\xi_g$ are independent standard Gaussian random variables, while
\[
  a_n \coloneqq
  \sqrt n\left\{\frac{c-1}{m_n}-c_1\frac{\ln p_n}{n}\right\}
  \longrightarrow c - 1.
\]
Writing $\Psi$ for the standard Gaussian distribution function, it holds
\[
  \Pr\left(\sqrt{n} \min_{1\le g\le M_n} \bar \xi_g \ge a_n \right)
  =\{1-\Psi(a_n)\}^{M_n} \to 0,
\]
as $a_n$ remains bounded for large $n$ and $M_n\to\infty$.
\end{proof}
 
\section{Proofs of Corollaries}
\subsection{Proof of \autoref{cor:Mest}}

\myparagraph{Consistency}
For \ref{A:Cons1}, we can choose
\[
H_n(\bX_i) = - \inf_{\btheta \in \Theta_n} | \psi'(Y_{i}, \bX_{i}^\top \btheta)| \bX_{i}  \bX_{i}^\top.
\]
Then, \ref{A:Cons1}\ref{eq:Hn-def} is fulfilled by the definition of $H_n$ and \ref{eq:Hn-identifiable} follows from \eqref{eq:Mest-eigcond}.
For \ref{eq:Hn-bounds}, $ \inf_{\btheta \in \Theta_n}|\psi'(Y_{i}, \bX_{i}^\top \btheta)|$ is negligible as this term is bounded, so it remains to verify
\[
\sup_{\| \bu \| = 1} \E\left[ (\bu^\top \bX  \bX^\top \bu)^2 \right] = o(n), \quad
 \lf\|  \E[( \bX  \bX^\top)^2 \ind_{\|\bX\|^2 \le B_n}] \ri\| = o(n/\ln p_n),  
\]
and $n \Pr\lf(\| \bX  \bX^\top \|> B_n \ri) = o(1)$.
The first condition holds since $(\bu^\top \bX  \bX^\top \bu)^2 = (\bu^\top \bX)^4$ and $\E[(\bu^\top \bX)^4] \allowbreak = O(\| \bu \|^4)$. 
The second condition follows from \cref{lem:semi-definite} using $\| \E[\bX \bX^T] \|=O(1)$.
The third condition holds for $B_n = \omega_n n/\ln p_n$ with $\omega_n \to 0$ arbitrarily slowly.
Then, the assumptions on $\rho^{-1}(n)$ and $p_n$ imply $\rho^{-1}(n) = o(B_n)$ and $p_n = o(B_n)$. 
Note that the first condition in \eqref{eq:design-cond} implies $\E[\| \bX \|] = O(\sqrt{p_n})$ since $\E[\| \bX \| ] \le \sqrt{\E[\| \bX \|^2]}$ and 
$\E[\| \bX \|^2] = \text{tr}(\E[\bX \bX^T]) \le p_n \| \E[\bX \bX^T] \| = O(p_n)$.
Furthermore, note that $\rho(x)/x$ increasing implies $\rho(\lambda x) \le \lambda \rho(x)$ for $\lambda \le 1$ (since $\rho(\lambda x) / (\lambda x) \le \rho(x) / x$) and therefore $x_n = o(y_n) \implies \rho(x_n) = o(\rho(y_n))$.
We have
\begin{align*}
  n \Pr\lf(\| \bX  \bX^\top \|> B_n \ri) & = n \Pr\lf(\| \bX \|> \sqrt{B_n} \ri) \\
  & \le n \Pr\lf(\E[\| \bX \|] > \sqrt{B_n}/2 \ri) + n \Pr\lf( (\| \bX \| - \E[\| \bX \|]) > \sqrt{B_n}/2 \ri).
\end{align*}
It holds that $\Pr\lf(\E[\| \bX \|] > \sqrt{B_n}/2 \ri) = 0$ for $n$ large, since $\E[\| \bX \|] = O(\sqrt{p_n})$ and $p_n = o(B_n)$.
For the second term we obtain
\[
n \Pr\lf( (\| \bX \| - \E[\| \bX \|]) > \sqrt{B_n}/2 \ri) \le \frac{n \E[\rho(|\| \bX_{i} \| - \E[\| \bX_{i} \|]|^2)]}{\rho(B_n/4)} = O\left(\frac{n}{\rho(B_n/4)}\right) = o(1)
\]
since $\rho^{-1}(n) = o(B_n)$ implies $n = o(\rho(B_n/4))$.
Now the consistency result follows from \cref{theorem1}.
Since our choice of $H_n$ does not rely on $\btheta^*$, \ref{A:Cons1}\ref{eq:Hn-def} holds with $\btheta^*$ replaced by any $\btheta \in \Theta_n$, and the resulting estimator is unique by \cref{theorem1-uniqueness}.
The rate of convergence is $\sqrt{p_n/n}$, since $\tr(I(\btheta^*)) = O(p_n)$ follows from $\E[\phi_{}(\bX_{i}; \btheta^*)_k^4] = O(1)$.

\myparagraph{Asymptotic normality}
It suffices to verify \ref{A:Asymp} for each row $\ba_l, l = 1, \ldots, q$ of $A_n$, as $A_n \phi_{i}(\btheta)$ is a finite dimensional vector.
Note that $\| A_n \|=O(1)$ implies $\| \ba_l \| =O(1)$ for all $r$, since $\| \ba_l \| = \sup_{\| \bx \| = 1} |\ba_l^T \bx| \le \sup_{\| \bx \| = 1} \| A_n\bx \| = \| A_n \|$.
Using a Taylor expansion and boundedness of $\psi'(Y, \bX^\top \btheta)$, it suffices to use $\ba_l^\top \bX  \bX^\top \btu$ with $\btu \coloneq \bu - \bu'$ instead of $\ba_l^\top [\phi_{i}(\btheta^* +  \bu) - \phi_{i}(\btheta^* +  \bu')]$.
We obtain
\[
\E[ | \ba_l^\top \bX  \bX^\top \btu|^2	] 
= \E[ | \ba_l^\top \bX |^2 | \bX^\top \btu|^2] 
\le \sqrt{\E[|\ba_l^\top \bX|^4] \E[|\btu^\top \bX|^4]} = O(  \|\btu\|^2),
\]
verifying the first condition in \ref{A:Asymp} (with $p_n^2/n = o(1)$).
Similarly, one can show $\E[ | \ba_l^\top \bX  \bX^\top \btu|^k	] = O(  \|\btu\|^k)$.
For the second condition, set $D_n =  n \omega_n /\sqrt{p_n^3}$ with $\omega_n \to 0$ arbitrarily slowly, which satisfies $D_n = o(\sqrt{n}/(r_n p_n))$.
Markov's inequality gives
\begin{align*}
 &\quad \sumin \Pr\lf(\sup_{\|\bu\| \le r_n C}\frac{| \ba_l^\top \bX_{i} \bX_{i}^\top \btu| }{\|\btu\|}>  D_n \ri) 
 \le \frac{n \E[ | \ba_l^\top \bX  \bX^\top \btu|^k	]}{\|\btu\|^k D_n^k}
  = O\left(n^{1-k} p_n^{3/2 \, k} \omega_n^{-k} \right) = o(1),
 \end{align*}
 since $p_n^{3k}/(n^{2k - 2}) \to 0$.
Uniform boundedness of $\psi'$ implies that $J(\btheta) = \E[\nabla_{\btheta} \phi(\btheta)]$, so $J(\btheta) = \E[\psi'(Y, \bX^\top \btheta) \bX \bX^\top]$.
Using that $\psi'$ is Lipschitz, it holds that
\begin{align*}
  \| \ba_l^\top [\bar J(\btheta^*, \btheta^* +   \bu) - J(\btheta^*)] \| 
  & = \| \ba_l^\top \int_0^1 \E[\psi'(Y, \bX^\top [\btheta^* + t \bu])\bX \bX^\top - \psi'(Y, \bX^\top \btheta^*) \bX \bX^\top] dt \| \\
  & \le  \sup_{\| \btu\| = 1} \int_0^1 \E[|\ba_l^\top   \bX  \btu^\top \bX [\psi'(Y, \bX^\top [\btheta^* + t \bu]) - \psi'(Y, \bX^\top \btheta^*)] | ] dt \\
  & \lesssim \sup_{\| \btu\| = 1} \int_0^1 t dt \, \E[|\ba_l^\top   \bX \btu^\top \bX \bu^\top \bX |] \\ 
  &\le \sup_{\| \btu\| = 1} |\E[|\ba_l^\top   \bX|^3]^{1/3} \E[|\btu^\top \bX|^3]^{1/3} \E[|\bu^\top \bX|^3]^{1/3} | \\
  &= O(\|\ba_l\|\|\bu\|) = O(\sqrt{p_n/n}) = o(1/\sqrt{p_n}),
\end{align*}
since $p_n^2/n = o(1)$, verifying the third condition in \ref{A:Asymp}.

Finally, since  $\| A_n \phi_{i}(\btheta) \| ^4  \le \| A_n \|^4 \| \phi_{i}(\btheta) \| ^4$,
\[
 \| \phi_{i}(\btheta)  \|^4  = \lf(\sum_{k=1}^{p_n} \phi_{i}(\btheta)_k^2 \ri)^2 
 = \sum_{k=1}^{p_n} \sum_{k'=1}^{p_n} \lf( \phi_{i}(\btheta)_k \phi_{i}(\btheta)_{k'}  \ri)^2,
\]
and $\max_k \E[\phi_{}(\bX; \btheta^*)_k^4] = O(1)$, we have $\mathbb{E}\left[ \| A_n \phi_{i}(\btheta^*)\|^4 \right] = O(p_n^2) = o(n)$,
since $p_n^2 / n= o(1)$, verifying \ref{A:Cons2}. We have checked all conditions of \cref{theorem2}, which yields the claim.

\subsection{Proof of \autoref{cor:TS}}
First, recall from \cref{ex:timeseries}, with $0 < \delta < \gamma < 1$ with $\gamma + \kappa \delta > 1$,
\[
m_N = N^\gamma, \quad n = N/m_N = N^{1 - \gamma}, \quad L_N = N/(N^\gamma + N^\delta), \quad n - L_N \le N^{1 + \delta- 2 \gamma}.
\]
The last inequality follows from a Taylor expansion with $f(x) = N/(N^\gamma + x), f'(x) = - N/(N^\gamma + x)^2$ around $N^{\delta}$.
Applying \cref{lem:Yokoyama} with $r=2$ and $\max_k \E[| \varphi(Y_j, \bZ_j; \btheta^*)_k|^{4 + \eps}] = O(1)$ for some $\eps > 0$ and $\kappa > 2(4 + \eps )/\eps$ yields $\max_{i , k} \E[\phi_i(\btheta^*)_k^4]  = O(m_N^2/m_N^4)= O(1/m_N^2)$.

The rate of convergence is given by $r_n = \sqrt{\tr(I(\btheta^*)) / n} = \sqrt{p_n / N}$, since $\max_{i, k} \E[\phi_i(\btheta^*)_k^2] = O(1/m_N)$ and therefore
\[
\tr(I(\btheta^*)) =  \sum_{k=1}^{p_n} \frac 1 n \sumin \E[\phi_i(\btheta^*)_k^2] \le p_n \max_{i, k} \E[\phi_i(\btheta^*)_k^2] = O(p_n/m_N).
\]
\myparagraph{Consistency}
We need to verify \eqref{eq:assumpDep} with $T_n = \| \phi_i(\btheta^*) \|_\infty, E_n = r_n n /(\sqrt{p_n} (n-L_N)), r_n = \sqrt{p_n/N}$ and \ref{A:Cons1} with $B_n = O(n/(n-L_N))$.
For \eqref{eq:assumpDep}, the union bound and Markov's inequality yield, since $r_n n / \sqrt{p_n} = \sqrt{N}/m_N$, 
\begin{align*}
\sum_{i=L_N + 1}^{n} \Pr\left( \| \phi_i(\btheta^*) \|_\infty > \frac{\sqrt{N}}{m_N (n-L_N)} \right) 
& \le \sum_{i=L_N + 1}^{n} p_n \max_{1 \le k \le p_n}\Pr\left( |\phi_i(\btheta^*)_k | > \frac{\sqrt{N}}{m_N (n-L_N)}  \right) 
\\ & \le \frac{(n - L_N)^5 p_n m_N^4 \max_{i , k} \E[\phi_i(\btheta^*)_k^4]}{N^2} \\
& = O\left(\frac{(n - L_N)^5 p_n m_N^4 }{N^2 m_N^2} \right) = O\left(\frac{(n - L_N)^5 p_n  }{n^2} \right) \\
& = O(p_n N^{(5 (1 + \delta - 2 \gamma) - 2 (1 - \gamma))} )= O(p_n N^{(3 +5\delta - 8 \gamma )})= o(1),
\end{align*}
since $p_n = o(N^{(-3 - 5 \delta + 8 \gamma)})$ follows from choosing $\gamma$ arbitrarily close to 1 and $\delta > (1 - \gamma)/\kappa$ arbitrarily close to 0 and $p_n / N \to 0$. 

Since $\psi'$ is non-positive and $\nabla_{\btheta} \psi(Y_j, \bZ_j^\top \btheta)\bZ_j = \psi'(Y_j, \bZ_j^\top \btheta) \bZ_j \bZ_j^\top$, it holds that
\[
\bu^\top [\phi_i(\btheta^* + \bu) - \phi_i(\btheta^*)] \le \bu^\top \left[ - \frac 1 {m_N} \textstyle \sum_{j \in I_{i,N}} \inf_{\btheta \in \Theta_n} | \psi'(Y_j, \bZ_j^\top \btheta)| \bZ_j \bZ_j^\top \right] \bu,
\]
so \ref{A:Cons1}\ref{eq:Hn-def} holds with $H_n(\bX_i) = - m_N^{-1} \sum_{j \in I_{i,N}} \inf_{\btheta \in \Theta_n} | \psi'(Y_j, \bZ_j^\top \btheta)| \bZ_j \bZ_j^\top$.
This choice satisfies \ref{A:Cons1}\ref{eq:Hn-identifiable} by \eqref{eq:Mest-eigcond}.
For \ref{eq:Hn-bounds}, $\inf_{\btheta \in \Theta_n} | \psi'(Y_j, \bZ_j^\top \btheta)|$ is negligible since this term is bounded.
Now the first condition in \ref{A:Cons1}\ref{eq:Hn-bounds} follows from
\begin{align*}
  & \quad \sup_{\| \bu \| = 1} \frac{1}{n}  \sumin    \E\left[ (\bu^\top (m_N^{-1}\textstyle \sum_{j \in I_{i,N}} \bZ_j \bZ_j^\top) \bu)^2 \right] \displaystyle
    = \sup_{\| \bu \| = 1} \frac{1}{n}  \sumin \E[ (m_N^{-1} \textstyle  \sum_{j \in I_{i,N}} (\bu^\top \bZ_j)^2)^2 ]  \\
   & = \sup_{\| \bu \| = 1} \frac{1}{n} m_N^{-2} \sumin \sum_{j \in I_{i,N}} \sum_{j' \in I_{i,N}} \E[(\bu^\top \bZ_j)^2 (\bu^\top \bZ_{j'})^2] = O(1) = o(n),
\end{align*}
since 
\[
\E[(\bu^\top \bZ_j)^2 (\bu^\top \bZ_{j'})^2] \le \sqrt{\E[(\bu^\top \bZ_j)^4] \E[(\bu^\top \bZ_{j'})^4]} = O(\| \bu \|^4) = O(1)
\]
uniformly for all $j, j'$ by \eqref{eq:design-cond}.
Set $B_n = n  \omega_n / \ln p_n$ with $\omega_n \to 0$ arbitrarily slowly.
This $B_n$ satisfies $B_n = O(n/(n - L_N))$ since $n - L_N \to 0$ as we choose $\gamma$ arbitrarily close to 1.
Now the second condition in \ref{A:Cons1}\ref{eq:Hn-bounds} follows from \cref{lem:semi-definite}.
For the third condition, it holds that
\begin{align*}
   & \quad \sumin \Pr (\| m_N^{-1} \sum_{j \in I_{i,N}} \bZ_j \bZ_j^\top \| > B_n ) \\
   & \le  \sumin \Pr (\|  \E[ \bZ \bZ^\top ] \| > B_n/2 )  +  \sumin \Pr (\| m_N^{-1} \textstyle \sum_{j \in I_{i,N}} \bZ_j \bZ_j^\top -  \E[ \bZ \bZ^\top ] \| > B_n/2 ).
\end{align*}
The first term is 0 for $n$ large since $\|  \E[ \bZ \bZ^\top ] \| = O(1)$ and $B_n \to \infty$ because $\ln p_n / n \to 0$.
For the second term, for each $i = 1, \ldots n$, we build $m_N / \tilde m_N$ blocks of size $\tilde m_N$ of consecutive observations, where $\tilde m_N = N^\alpha$ with $\alpha > 1/(1 + \kappa)$.
Let $\tilde I_{k, i, N}, k = 1, \ldots m_N / \tilde m_N$ denote the indices of block $k$ of $I_{i,N}$ and $Y_{k,i} = \tilde m_N^{-1} \sum_{j \in \tilde I_{k,i,N}} \bZ_j \bZ_j^\top -  \E[ \bZ \bZ^\top]$.
Now
\begin{align*}
   & \quad \sumin \Pr (\| m_N^{-1} \textstyle \sum_{j \in I_{i,N}} \bZ_j \bZ_j^\top -  \E[ \bZ \bZ^\top ] \| > B_n/2 ) \\
   & \le  \sumin \Pr (\|  \tilde m_N m_N^{-1} \textstyle \sum_{\text{$k$ even}} Y_{k,i}  \| > B_n/4 ) + \displaystyle  \sumin \Pr (\|   \tilde m_N m_N^{-1} \textstyle \sum_{\text{$k$ odd}} Y_{k,i}  \| > B_n/4 ).
\end{align*}
We show that the first term is $o(1)$, the result for the second follows in the same way.
Similar to the proof of \cref{lem:dep}, we can construct an independent sequence $Y_{2,i}^*,  Y_{4,i}^*, \ldots$ satisfying $Y_{k,i}^* =_d Y_{k,i}$ and $\max_{\text{$k$ even}} \P(Y_{k,i}^* \neq Y_{k,i}) \le \tilde m_N^{- \kappa}$.
Now
\begin{align*}
 & \quad \sumin \Pr (\|  \tilde m_N m_N^{-1} \textstyle \sum_{\text{$k$ even}} Y_{k,i}  \| > B_n/4 )  \\
 & \le \sumin \Pr (\|  \tilde m_N m_N^{-1} \textstyle \sum_{\text{$k$ even}} Y_{k,i}^*  \| > B_n/8 ) 
 + \displaystyle\sumin \Pr (\|  \tilde m_N m_N^{-1} \textstyle \sum_{\text{$k$ even}} Y_{k,i} -  Y_{k,i}^*  \| > B_n/8 ) \\
 & \le \sumin \Pr (\|  2 \tilde m_N m_N^{-1} \textstyle \sum_{\text{$k$ even}} Y_{k,i}^*  \| > B_n/4 ) 
 + \displaystyle\sumin \Pr (\textstyle \bigcup_{\text{$k$ even}} \{ Y_{k,i}^* \neq Y_{k,i}\}) .
\end{align*}
The second term is $\lesssim n m_N / \tilde m_N \tilde m_N^{-\kappa} = N \tilde m_N^{-1 - \kappa} = N^{1 - \alpha (1 +  \kappa)} = o(1)$ by the condition on $\alpha$.
For the first term, note that the definition of $b_n$ implies
\[
\textstyle \max_{i, k} \| Y_{i,k} \| \le \frac{1}{\tilde m_N} \sum_{j \in \tilde I_{k,i,N} } \| \bZ_j \bZ_j^\top - \E[\bZ \bZ^\top] \|
\lesssim \max_j \| \bZ_j \bZ_j^\top \| =  \| \bZ \|^2 \le b_n \quad \text{a.s.}
\]
Denote $X_j = \bZ_j \bZ_j^\top -  \E[ \bZ \bZ^\top]$ and
\begin{align*}
  \sigma_n^2 & \coloneq \max_{i} \frac{2 \tilde m_N}{m_N} \| \sum_{\text{$k$ even}} \E[Y_{k,i}^2]  \|
  \le \max_{i, k} \|  \E[Y_{k,i}^2] \| \le  \frac{1}{\tilde m_N^2}\max_{i, k} \sum_{j \in \tilde I_{k,i,N}} \sum_{j' \in \tilde I_{k,i,N}} \| \E[X_j X_{j'}] \| \\
  & \le \max_j \| \E[X_j^2] \| = \| \E[(\bZ \bZ^\top - \E[\bZ \bZ^\top])^2] \le  \| \E[(\bZ \bZ^\top)^2] \| = \| \E[\|\bZ\|^2 \bZ \bZ^\top] \| \\
  & = \sup_{\| \bu \| = 1}\E[\|\bZ\|^2 \bu^\top \bZ \bZ^\top\bu]
  \le \sup_{\| \bu \| = 1} \sqrt{\E[\| \bZ \|^4] \E[(\bu^\top \bZ)^4]} = O(\sqrt{\E[\| \bZ \|^4]}) = O(p_n),
\end{align*}
since $\max_{1 \le k \le p_n}\E[Z_k^4] = O(1)$.
Now, the Bernstein inequality for random matrices \citep[Theorem 6.17]{WWbook} yields
\begin{align*}
  \sumin \Pr (\|  2 \tilde m_N m_N^{-1} \textstyle \sum_{\text{$k$ even}} Y_{k,i}^*  \| > B_n/4 ) 
  \lesssim  \exp\left(\ln(n p_n) - \frac{m_N B_n^2}{\tilde m_N (\sigma_n^2 + b_n B_n)}\right) = o(1),
\end{align*}
since 
\[
\frac{\ln(n p_n) \tilde m_N p_n}{m_N B_n^2}
= \frac{\ln(n p_n) N^\alpha p_n (\ln (p_n))^2}{N n \omega_n} \to 0, \quad 
\frac{\ln(n p_n) \tilde m_N b_n}{m_N B_n} 
= \frac{\ln(n p_n) N^\alpha b_n \ln (p_n)}{N \omega_n}  \to 0.
\]
Now consistency with $r_n = \sqrt{p_n/N}$ follows from \cref{theorem1}.
Uniqueness of $\hbtheta$ follows from \cref{theorem1-uniqueness}, since $H_n$ does not rely on $\btheta^*$. 

\myparagraph{Asymptotic normality}
We set $\gamma = 1/2$, so $n = m_N = \sqrt{N}$, and $\delta \le 1/10$ small enough such that $\delta > (1 - \gamma)/\kappa$. 
Such $\delta$ exists since $\kappa > 5$.
Now
\[ 
(n - L_N)^5 / n \le N^{(5 + 5\delta - 10 \gamma)}/N^{(1-\gamma)} = N^{( 5 \delta - 1/2)} = O(1).
\]
It remains to prove \ref{A:Asymp} with $D_n = O(\sqrt{n}/(n- L_N))$ and \ref{A:Cons2} with some matrix $B_n = \sqrt{m_N} A_n$ with $\| A_n \|_\infty =O(1)$.
For \ref{A:Cons2}, we use $\| B \bx \|_2 \le \sqrt{q} \| B \bx \|_\infty \le \sqrt{q} \| B \|_\infty \| \bx \|_\infty$ for a $q \times p$ dimensional matrix $B$, so
\begin{align*}
  \max_i \E\left[ \| B_n \phi_{i}(\btheta^*)\|^4 \right] \lesssim m_N^2 \| A_n \|_\infty^4 \max_i \E\left[ \| \phi_{i}(\btheta^*)\|^4_\infty \right]
  \lesssim m_N^2 p_n \max_{i, k} \E\left[  \phi_{i}(\btheta^*)_k^4 \right] = o(n)
\end{align*}
since $p_n = o(n)$.
Similar to the proof of \cref{cor:Mest}, we verify \ref{A:Asymp} for each row $\bb_l, l = 1, \ldots, q$ with $\bb_l = \sqrt{m_N} \ba_l$, $\| \ba_l \| =O(1)$ (which follows from $\| A_n\|_\infty = O(1)$).
We choose $\tilde \phi_i(\btheta) = \phi_i(\btheta) - \E[ \phi_i(\btheta) ]$ to obtain sharp bounds.
We may ignore $\psi'(Y, \bZ^\top \btheta)$ since its absolute value is uniformly bounded and work with $\frac{\sqrt{m_N}}{m_N} \ba_l^\top  \textstyle \sum_{j \in I_{i,N}} ( \bZ_j \bZ_j^\top - \E[\bZ \bZ^\top]  ) \bu$.

It holds that $\E[| \ba^\top \bZ |^{4r + 2 \eps'}] = O(\| \ba \|^{4r + 2 \eps'})$ with $4r + 2 \eps' = \tau$, $r > 1$ by assumption, which implies $\E[| \ba_l^\top \bZ \bZ^\top \bu |^{2r + \eps' }] = O(\| \ba_l \|^{2r + \eps' } \| \bu \|^{2r + \eps' })$.
To apply \cref{lem:Yokoyama} ($\max_i \E[|\ba_l^\top \textstyle \sum_{j \in I_{i,N}} ( \bZ_j \bZ_j^\top - \E[\bZ \bZ^\top]  ) \bu |^{2r}] = O(m_N^r \| \ba_l \|^{2r} \| \bu \|^{2r})$) with the largest possible $r$, we maximize $r$ under the constraint that $\kappa > r(2r + \eps')/\eps'$:
We have $\eps' = \tau/2 - 2r$.
Plugging this into the constraint gives
\[
\kappa > \frac{r(2r + \tau/2 - 2r)}{\tau/2 - 2r} \ \Leftrightarrow \ r \tau / 2 < \kappa \tau/2 - 2 \kappa r\ \Leftrightarrow \ r(\tau + 4 \kappa) < \kappa \tau \ \Leftrightarrow \ r < \frac{\kappa \tau}{\tau + 4 \kappa}.
\]
The first condition in \ref{A:Asymp} follows since $r_n^2 p_n = p_n^2/N \to 0$ and
\[
  \max_{1 \le i \le n }\frac{1}{m_N} \E\Big[ | \ba_l^T \sum_{j \in I_{i,N}} ( \bZ_j \bZ_j^\top - \E[\bZ \bZ^\top] ) \bu|^2\Big] = O(\| \bu \|^2).
\]
For the second condition, set $D_n = \min\{ \sqrt{n}/(n - L_N), \sqrt{n N/p_n^3} \, \omega_n\}$ with $\omega_n \to 0$ arbitrarily slowly.
This choice satisfies both $D_n = O(\sqrt{n}/(n-L_N))$ and $D_n = o(\sqrt{n}/(r_n p_n))$.
It holds that
\begin{align*}
  & \quad \sumin \P\left(\sup_{\| \bu \| \le r_n C} \frac{m_N^{-1/2}|\ba_l^\top  \textstyle \sum_{j \in I_{i,N}} ( \bZ_j \bZ_j^\top - \E[\bZ \bZ^\top] )  \bu |}{\| \bu \|} > D_n\right) \\
  & \le \frac{n \, m_N^{-r} \max_i \E[|\ba_l^\top \textstyle \sum_{j \in I_{i,N}} ( \bZ_j \bZ_j^\top - \E[\bZ \bZ^\top]  ) \bu |^{2r}]}{\| \bu \|^{2r} D_n^{2r}}
  = O(n/D_n^{2r}) \\
  & = O\left(\frac{\max\{ (n - L_N)^{2r}, p_n^{3r}/(N^r \omega_n^{2r}) \}}{n^{r-1}}  \right)
  = O\left(\max\left\{N^{2r \delta - (r-1)/2} , \frac{ p_n^{3r}}{N^{r + (r-1)/2} \omega_n^{2r}}\right\} \right) = o(1)
\end{align*}
since $p_n^{3r} / N^{(1.5 r - 1/2)} = o(1)$ and $r/\kappa - (r-1)/2 < 0$ (and $\delta > 1/(2 \kappa)$), which follows from $r > \kappa / (\kappa - 2)$.
This is implied by $\kappa > 3 \tau/(\tau - 4)$, since
\[
\frac{\kappa}{\kappa - 2} < \frac{\kappa \tau}{\tau + 4 \kappa} \ \Leftrightarrow \ \kappa \tau + 4 \kappa^2 < \kappa^2 \tau - 2 \kappa \tau 
\ \Leftrightarrow \ 3 \kappa \tau  < \kappa^2 (\tau - 4) \ \Leftrightarrow \ \frac{3 \tau}{\tau - 4} < \kappa.
\]
For the third condition in \ref{A:Asymp}, note that $1/(\sqrt{n} r_n) = \sqrt{m_N / p_n}$.
Since $J(\btheta) = \E[\psi'(Y, \bZ^\top \btheta) \bZ \bZ^\top]$, the proof of \cref{cor:Mest} gives 
\[
\| \bb_r^\top [\bar J(\btheta^*, \btheta^* +   \bu) - J(\btheta^*)] \| 
   = O(\|\bb_r\|\|\bu\|) = O(\sqrt{m_N} \| \ba_l\| \sqrt{p_n / N} ) = o(\sqrt{m_N / p_n})
\]
since $p_n^2/N \to 0$.
Now \cref{th:dependence} and \cref{theorem2} imply the claim.

\subsection{Proof of \autoref{cor:MestP}}

We apply  Theorems \ref{theorem4}--\ref{theorem6}. The rate of convergence in \cref{theorem4}  is $\tilde r_n = \sqrt{s_n \ln p_n /n} $, since $b^*_n = \lambda_n$ and $\lambda_n = O(\sqrt{\ln p_n / n})$.
\begin{itemize}
  \item The conditions on the penalty \ref{A:Penalty2}--\ref{A:Penalty4} are always satisfied for the Lasso and \ref{A:Penalty1} holds since $\min_{k : \theta^*_k \neq 0 }| \theta_{k}^*|/ \tilde r_n \to \infty$. 
  \item That the reduced problem \eqref{eq:Def_M_reduced} satisfies \ref{A:Cons1} follows from the proof of \cref{cor:Mest}.
    
  \item For \ref{A:phi-moments}, our assumptions give $\sigma_n = \sigma$ which is bounded away from zero and infinity. Further the union bound, Markov's inequality, \eqref{eq:design-cond-pen}, and \eqref{eq:MestP-dim-cond} imply
  \begin{align*}
    \sumin \Pr\lf(\| \phi_{i}(\btheta^*)\|_\infty >\sqrt{n \sigma^2/4\ln p_n} \ri) 
    &\le n p_n \max_{k} \Pr\lf( \phi_{i}(\btheta^*)_k^2 > n \sigma^2/4\ln  p_n \ri) \\
    &\le  \frac{n p_n \max_{k}\E[\rho( \phi_{i}(\btheta^*)_k^2)]}{\rho( n \sigma^2/4\ln  p_n)} = O\lf(\frac{n p_n}{\rho(n/ \ln p_n)}  \ri) = o(1).
  \end{align*}
    
  \item \eqref{eq:Mest-mutual-incoherence} implies \ref{A:lambda} (see \cref{sec:mutual}), and \ref{A:lambda2} holds with the proposed choice of $\lambda_n$ since
  \begin{align*}
   & \quad \max_{k  =s_n + 1, \ldots, p_n } \sup_{\btheta \in \Theta'_n, \| \bx \|_\infty \le 1 } \|\bar J(\btheta^*, \btheta)_{k,(1)} \;\bar J(\btheta^*, \btheta)^{-1}_{(1)}  \bx \|_2  
   \\ & = \max_{k  =s_n + 1, \ldots, p_n } \sup_{\btheta \in \Theta'_n} \| (\bar J(\btheta^*, \btheta)_{k,(1)} \;\bar J(\btheta^*, \btheta)^{-1}_{(1)})^T \|_1 = \sup_{\btheta \in \Theta'_n} \lf\|   \bar J(\btheta^*, \btheta)_{(2,1)} \;\bar J(\btheta^*, \btheta)^{-1}_{(1)} \ri\|_{\infty} \le \alpha .
  \end{align*}

  \item Observe that for $|\psi'| \le K$ and $\be_k$ the $k$th unit vector, we have
  \begin{align*}
    | \phi_{i}(\btheta)_k - \phi_{i}(\btheta')_k | \le K |\be_{k}^\top \bX_{i} | |\bX_{i(1)}^\top (\btheta_{(1)} - \btheta_{'(1)})|,
  \end{align*}
  using that $\btheta_{(2)} = \0$ for $\btheta \in \Theta_n'$.
  By our design conditions, we get 
  \begin{align*}
    \max_{1 \le k \le p_n}\E[| \phi_{i}(\btheta)_k - \phi_{i}(\btheta')_k |^2 ] = O(\|\btheta - \btheta'\|^2),
  \end{align*}
  as in the proof of \cref{cor:Mest}, so the first condition of \ref{A:Penalty_emp_pr2} holds because $(s_n^2 + s_n \ln p_n)/n  = o(1)$.
  For the second condition, choose $\tD_n = K\sqrt{s_n} \rho^{-1}(np_n \omega_n)$ with $\omega_n \to \infty$ arbitrarily slowly. It holds that
  \begin{align*}
    \sumin \Pr\lf( \sup_{\btheta, \btheta' \in \Theta_n'}  \frac{\| \phi_{i}(\btheta) - \phi_{i}(\btheta') \|_\infty}{\| \btheta - \btheta' \|} > \tD_n \ri) 
    &\le n \Pr\lf(K \|\bX\|_\infty \|\bX_{(1)}\| > \tD_n \ri) \\
    &\le n \Pr\lf(K \sqrt{s_n} \|\bX\|_\infty^2  > \tD_n \ri) \\
    &\le n p_n \max_{1 \le k \le p_n} \Pr\lf(|X_{ k}|^2  > \tD_n / (K \sqrt{s_n})  \ri) \\
    &\le O\lf(\frac{n p_n \max_{1 \le  k \le p_n}\E[\rho( X_{k}^2)] }{\rho( \tD_n / (K \sqrt{s_n}))}\ri) \\
    &= O\lf(\frac{n p_n}{\rho( \rho^{-1}(n p_n \omega_n) )}\ri)  = o(1).
  \end{align*}
  This choice satisfies
  \begin{align*}
    \frac{\tD_n  \tilde r_n(s_n + \ln p_n)}{n \eta_n } \le  \frac{\tD_n  \sqrt{s_n} (s_n+ \ln p_n)}{2\sigma n}= \frac{K}{2\sigma} \frac{ (s_n^2 + s_n \ln p_n) \rho^{-1}(n p_n \omega_n)}{ n} =o(1),
  \end{align*}
  since \eqref{eq:MestP-dim-cond} and $\omega_n \to \infty$ arbitrarily slowly imply $n p_n \omega_n \le \rho(n/(s_n \ln p_n)^2)$.

  \item The first two conditions in \ref{A:Asymp} can be verified as in the proof of \cref{cor:Mest} by the choice $D_n =n/\sqrt{s_n^3 \ln p_n} \omega_n$ and $\omega_n \to 0$ arbitrarily slowly. 
  For the second condition, $n / D_n^k \to 0$ follows from $s_n^{3k} (\ln p_n)^k/n^{(2k - 2)} \to 0$.
  Similarly, it follows
  \begin{align*}
    \sup_{\| \bu\|  \le \tilde r_n C} \| A_n[\bar J(\btheta^*, \btheta^* +   \bu) - J(\btheta^*)] \|   = O(\tilde r_n) = o\lf(\frac{1}{\sqrt{n} \tilde r_n}\ri), 
  \end{align*}
  because $\tilde r_n^2 \sqrt{n} =  s_n \ln p_n / \sqrt{n} = o(1)$ by \eqref{eq:MestP-dim-cond}.
  Assumptions \ref{A:Cons2} is verified exactly as in \cref{cor:Mest}, so asymptotic normality of $\hbtheta_{(1)}$ follows from \cref{theorem6}.

  \item To verify the conditions of \cref{theorem5}, we can choose $H_n$ as in \cref{cor:Mest}.
  This construction is independent of $\btheta^*$.
  Since $|\psi'|$ is bounded away from zero, the eigenvalue condition implies that \ref{A:phi-H-penalty}\ref{eq:Hn-def2} and \ref{eq:Hn-identifiable2} hold for all $\btheta \in \Theta_n$.
  It remains to verify \ref{A:phi-H-penalty}\ref{eq:Hn-bounds2} with $\nu_n = s_n$.
  As $\max_{1 \le k \le p_n} \E[X_{i, k}^4] = O(1)$ by \eqref{eq:design-cond}, the first condition holds since $s_n^2 \ln p_n / n =o(1)$.
  For the second condition, choose $\tilde B_n = \rho^{-1}(n p_n \omega_n)$ with $\omega_n \to \infty$ arbitrarily slowly. 
  Then
  \begin{align*}
  \sumin \Pr \lf(\max_{1 \le j, k \le p_n} |X_{i,j} X_{i,k}| > \tilde B_n \ri)
  &\le \sumin \Pr \lf(\max_{1 \le k \le p_n} X_{i,k}^2 > \tilde B_n \ri) \\
  & \le n p_n \max_{1 \le  k \le p_n}  \Pr \lf( X_{k}^2 > \tilde B_n \ri) \\
  & \le \frac{n p_n \max_{1 \le  k \le p_n}\E[\rho( X_{k}^2)]}{\rho(\tilde B_n)} = O\lf(\frac{n p_n}{n p_n \omega_n}\ri) = o(1),
  \end{align*}
  using \eqref{eq:design-cond-pen}.
  Since \eqref{eq:MestP-dim-cond} and $\omega_n \to \infty$ arbitrarily slowly implies $n p_n \omega_n \le \rho(n/(s_n \ln p_n)^2)$, this choice satisfies $\tilde B_n = o(n/(s_n \ln p_n))$.
\end{itemize}

\subsection{Proof of \autoref{cor:MestSCAD}}

Since $(\min_{g \in I_{(1)}} \| \btheta^*_{G_g} \|_2 - \sqrt{s_n / n})/\lambda_n \to \infty$ and $\sqrt{n/s_n} \min_{g \in I_{(1)}} \| \btheta^*_{G_g} \|_2 \to \infty$, we have $b_n^* = 0, \tilde r_n = \sqrt{s_n/n}$ and \ref{A:Penalty1}, \ref{A:Penalty2} and \ref{A:Penalty4} hold automatically.
It remains to verify \ref{A:Penalty3} for the nonconvex part, i.e., for $\lambda_n < \| \btheta_{G_g} \| \le a \lambda_n$, where the gradient is given by $(a \lambda_n -  \| \btheta_{G_g} \| )/(a - 1) \cdot \btheta_{G_g}  / \| \btheta_{G_g} \| $.
One can show that \ref{A:Penalty3} is implied by convexity of $f(\btheta_{G_g}) = p_{\lambda_n}(\| \btheta_{G_g}\|) + \btheta_{G_g}^\top \btheta_{G_g} \mu_n/2$ ($\mu$-amenability, \citet{LohWW2} and \citet{Loh}), where $p_{\lambda_n} (\theta)$ denotes the (one-dimensional) SCAD.
Computing the Hessian and its eigenvalues, one can show that group SCAD satisfies \ref{A:Penalty3} with $\mu_n = (a-1)^{-1}$. 

That the reduced problem \eqref{eq:Def_M_reduced} satisfies \ref{A:Cons1} follows from the proof of \cref{cor:Mest}.
\ref{A:phi-moments} follows from \eqref{eq:design-cond-pen} and $p_n = o(\rho(n/\ln p_n)/n)$.
\ref{A:lambda} is trivial for Group SCAD (with $\alpha = 0$), since $\nabla_{\btheta_{(1)}} p_{\blambda_n}(\btheta)  = \bnull$.
\ref{A:lambda2} holds with the proposed choice of $\lambda_n$.

\ref{A:Penalty_emp_pr2} can be verified similar to the proof of \cref{cor:MestP}. 
  The first condition holds since $s_n(s_n + \ln p_n)/(n \eta_n)^2 \to 0$.
  For the second condition, $p_n = o(\rho(n^{3/2} \eta_n)/(s_n^2 + s_n \ln p_n)/n)$ implies $\tilde D_n \tilde r_n (s_n + \ln p_n)/(n \eta_n) = o(1)$.
  
The first condition in \ref{A:Asymp} holds since $s_n = o(\sqrt{n})$, see the proof of \cref{cor:Mest}.
  The second condition holds with $D_n  = n/\sqrt{s_n^3}\omega$, $\omega_n \to \infty$ arbitrarily slowly, since $n/D_n^k \to 0$ follows from $s_n^{3k}/n^{(2k - 2)} = o(1)$. 
  The third condition holds since $\tilde r_n^2 \sqrt{n} = s_n/\sqrt{n} = o(1)$.
  \ref{A:phi-H-penalty} can be verified exactly as in the proof of \cref{cor:MestP} using $p_n = o(\rho(n/s_n \ln p_n)/n)$.

\subsection{Proof of \autoref{cor:distributed}}

We first verify \ref{A:Cons1}.
Since
\[
 \nabla_{\btheta}\phi_{i}(\btheta) =   \begin{pmatrix}
  	 \mathrm{diag} \lf( K_n \ind \{k_{i} = k \}\psi'(\bX_{i}; \theta_k) \ri)_{k = 1, \ldots, K_n}  &  \0 \\
     \frac{1}{K_n} \bm 1^\top  & -1
  \end{pmatrix},
\]
and there is some $c > 0$ such that $\sup_{\theta \in \Theta_0, \bx} \psi'(\bx; \theta) \le -c$,
we can choose
\begin{align*}
 H_n(\bx_{i}, k_{i}) =   \begin{pmatrix}
  	 \mathrm{diag} \lf( -\ind \{k_{i} = k \} K_n c\ri)_{k = 1, \ldots, K_n}  + I_{K_n}/\sqrt{4 K_n} &  \0 \\
   \0  & -1 +  1/\sqrt{4 K_n} 
  \end{pmatrix}
\end{align*}
by \cref{lem:diagonal-H-cover}.
For $n$ large, both $ I_{K_n}/\sqrt{4 K_n}$ and $1/\sqrt{4 K_n}$ are negligible.
Then, since $n_k = n / K_n$,
\begin{align*}
 \frac 1 n \sumin \E\lf[H_n(\bX_{i}, k_{i}) \ri] =   \begin{pmatrix}
  	 -c I_{K_n}  &  \0 \\
   \0  & -1 
  \end{pmatrix}, \
  \| H_n(\bx_i, k_i) \| = O(K_n), \ \| H_n(\bx_i, k_i )^2 \| = O(K_n^2).
\end{align*}
It therefore holds that $\limsup_{n \to \infty} \lambda_{\max}(n^{-1} \sumin  \E[H_n(\bX_{i}) ]) \le \max\{ -1,  -c \} < 0$.
We can choose $B_n = K_n C$ in \ref{A:Cons1} with some large enough $C$. Then, \ref{A:Cons1} is satisfied since $K_n^2 \ln K_n / n \to 0$.
  The given $H_n(\bx_{i}, k_{i}) $ is valid for all $\btheta \in \Theta_0^{K_n + 1}$, so the solution is also unique by \cref{theorem1-uniqueness}.
Next, we have
\begin{align*}
  I(\btheta^*) = K_n \mathrm{diag}\lf( \E[\psi(\bX_{i}; \theta^*_1)^2], \dots, \E[\psi(\bX_{i}; \theta^*_{K_n})^2], 0\ri),
\end{align*}
which implies that the convergence rate of the stacked parameter vector $\hbtheta$ is $\sqrt{\tr(I(\btheta^*)) / n} = \sqrt{K_n^2/n} = \sqrt{K_n/n_1}$.
We now verify \ref{A:Asymp} and \ref{A:Cons2}.
Uniform boundedness of $\psi'$ implies that $J(\btheta) = \frac 1 n \sumin \E[\nabla_{\btheta} \phi(\btheta)]$, so
\begin{align*}
  J(\btheta) =
  \begin{pmatrix}
    \mathrm{diag}\lf( \E[\psi'(\bX_{i}; \theta_1)], \dots, \E[\psi'(\bX_{i}; \theta_{K_n})] \ri) & \0 \\
    \frac{1}{K_n} \bm 1^\top                                                                          & - 1
  \end{pmatrix},
\end{align*}
for which the block inversion formula yields
\begin{align*}
  J(\btheta)^{-1} =
  \begin{pmatrix}
    \mathrm{diag}\lf( \E[\psi'(\bX_{i}; \theta_1)]^{-1}, \dots, \E[\psi'(\bX_{i}; \theta_{K_n})]^{-1} \ri)         & \0 \\
    \frac{1}{K_n}  \vec\lf(\E[\psi'(\bX_{i}; \theta_{1})]^{-1}, \dots,   \E[\psi'(\bX_{i}; \theta_{K_n})]^{-1}\ri) & -1
  \end{pmatrix}.
\end{align*}
Choosing $A_n = \ba_n^\top J(\btheta^*)^{-1} \in \R^{1 \times p_n}$ with $\ba^\top_n = (0, \dots, 0, 1)$ gives the statement for $ \htheta_{ K_n + 1} $, as
\begin{align*}
A_n = \ba_n^\top J(\btheta^*)^{-1} = \lf(-\frac{1}{K_n}  \E[\psi'(\bX_{i}; \theta^*_1)]^{-1}, \dots, -\frac{1}{K_n}  \E[\psi'(\bX_{i}; \theta^*_{K_n})]^{-1},- 1\ri),
\end{align*}
and therefore
\begin{align*}
  \ba_n^\top J(\btheta^*)^{-1} I_{}(\btheta^*) J(\btheta^*)^{-\top} \ba_n =  \frac 1 {K_n} \sum_{k = 1}^{K_n} \frac{\E[\psi(\bX_{i}; \theta^*_k)^2] }{\E[\psi'(\bX_{i}; \theta^*_k)]^2}.
\end{align*}
Using the formula for $\nabla_{\btheta} \phi_i(\btheta)$, we obtain
 \begin{align*}
& \quad A_n [\phi_{i}(\btheta^* + \bu) -   \phi_{i}(\btheta^* + \bu')] \\
 & = u_{K_n + 1} - u'_{K_n + 1} - \frac{1}{K_n} \sum_{k=1}^{K_n}  (u_k - u'_k) - \sum_{k=1}^{K_n} \frac{\ind\{k_{i} = k\} \psi'(\bX_{i}; \theta_k^* +  \tilde u_k) (u_k - u'_k)}{\E[\psi'(\theta_k^*)]} \\ & = O(\| \bu - \bu' \|),
 \end{align*}
 since $\psi'$ is bounded away from 0 and $- \infty$.
 Choosing $D_n = C$ with some large enough $C$, the first two conditions in \ref{A:Asymp} are satisfied since 
 $r_n^2 p_n = K_n^3 / n \to 0$ and $\sqrt{n}/(r_n p_n) = n/K_n^2 \to \infty$.
 \begin{align*}
& A_n [\bar J(\btheta^*, \btheta^* + \bu) - J(\btheta^*)] \\ &= -\frac{1}{K_n}  \lf( \frac{\int_0^1\E[ \psi'(\theta_1^* + t u_1) - \psi'(\theta_1^*)] dt}{\E[\psi'(\theta_1^*)]},\ldots, \frac{\int_0^1 \E[\psi'(\theta_{K_n}^* + t u_{K_n}) - \psi'(\theta_{K_n}^*)] dt}{\E[\psi'(\theta_{K_n}^*)]}, 0\ri) \lesssim \frac{\| \bu^\top \|}{ K_n}
 \end{align*}
since $\psi'$ is Lipschitz. 
As $\| \bu^\top \|  = \| \bu \|$, we have $\| A_n [\bar J(\btheta^*, \btheta^* + \bu) - J(\btheta^*)] \| = O(r_n / K_n)$, so the third condition in \ref{A:Asymp} holds since $(\sqrt{n} r_n)^{-1} = K_n^{-1}$
We further have 
\[
A_n \phi_{i}(\btheta^*) =  -\sum_{k=1}^{K_n} \frac{\ind\{k_{i} = k\}\, \psi(\bX_{i}; \theta_k^*)}{\E[\psi'(\bX_i; \theta_k^*)]} - \frac 1 {K_n} \sum_{k=1}^{K_n} \theta_k^* + \theta_{K_n + 1}^* =   -\sum_{k=1}^{K_n} \frac{\ind\{k_{i} = k\}\, \psi(\bX_{i}; \theta_k^*)}{\E[\psi'(\bX_i; \theta_k^*)]}.
\]
This implies that $ \max_{1 \le i \le n}\mathbb{E}\left[ \| A_n \phi_{i}(\btheta^*)\|^4 \right] = O(1 )$ using that $\max_{i, k}\E[\psi(\bX_{i}; \theta_k^*)^4] = O(1)$ since $\psi$ is bounded and $\E[\psi'(\bX_i; \theta_k^*)]$ is bounded away from $0$. This verifies \ref{A:Cons2}.

To obtain the asymptotic distribution of $\sqrt{n/K_n}(\hat{\theta}_k - \theta_k^*)$, choose $\ba_n^\top = K_n^{-1/2}\be_k^\top$ and
\[
A_n = \ba_n^\top J(\btheta^*)^{-1} = K_n^{-1/2}  \E[\psi'(\bX_{i}; \theta^*_k)]^{-1}\be_k^\top, \quad \text{so} \quad A_n \phi_i(\btheta) = \frac{\sqrt{K_n} \ind\{k_i = k\} \psi(\bX_i, \theta_k)}{\E[\psi'(\bX_{i}; \theta^*_k)]}.
\]
Simple calculations yield $\E[\| A_n \phi_i(\btheta^*) \|^4 ] = K_n^2 = o(n)$ and $O(1)$, $D_n = \sqrt{K_n} C$ and $O(r_n / \sqrt{K_n})$ for the three quantities in \ref{A:Asymp}, so the conditions of \cref{theorem2} are satisfied since $K_n^3/n \to 0$.

\subsection{Proof of \autoref{cor:network}}

We first verify \ref{A:Cons1}.
Similar to the proof of \cref{cor:distributed}, we can choose
\begin{align*}
 H_n(\bx_i, \bk_i) =  \frac{1}{m_N} \sum_{j \in I_{i,N}} \begin{pmatrix}
  	 \mathrm{diag} \lf( -\ind \{k_j = k \} K_n c\ri)_{k = 1, \ldots, K_n}  + I_{K_n}/\sqrt{4 K_n} &  \0 \\
   \0  & -1 +  1/\sqrt{4 K_n} 
  \end{pmatrix}
\end{align*}
with some $c > 0$ by the assumptions on $\psi'$.
We obtain 
\[
\frac{1}{n} \sumin \E[H_n(\bx_i, \bk_i)] = \begin{pmatrix}
  	 -c I_{K_n}  &  \0 \\
   \0  & -1 
  \end{pmatrix},
  \| H_n(\bx_i, \bk_i) \| = O(K_n), 
  \| H_n(\bx_i, \bk_i)^2 \| = O(K_n^2)
\]
using $n \, m_N = N, K_n = N/N_k, N_k = \sum_{j=1}^N \ind\{ k_j = k \}$.
Choosing $B_n = K_n C$ with some $C < \infty$ large enough, \ref{A:Cons1} holds since $K_n^2 \ln K_n / n \to 0$ and $B_n = O(n/(n - L_N))$ (since $K_n (n-L_N)/n = K_n/\sqrt{n} \, \cdot \, (n-L_N)/\sqrt{n} = o(1)$) by assumption.
For $I(\btheta)$, we have
\begin{align*}
I (\btheta)_{K_n} &  = \frac 1 n \sumin  \mathrm{Cov}[\phi(\bk_i, \bX_i ; \btheta)] \\ 
&= \frac 1 n \sumin \frac{1}{m_N^2} K_n^2 \sum_{j \in I_{i,N}}  \sum_{j'\in I_{i,N}} \left( \ind\{ k_j = k \land k_{j'} = k' \} \E[\psi(\bZ_j; \btheta_k) \psi(\bZ_{j'}; \theta_{k'}) ] \right)_{k, k' = 1, \ldots, K_n},
\end{align*}
where $I (\btheta)_{K_n}$ is the upper left $K_n \times K_n$ block of $I(\btheta)$ and the remaining column and row are $\bnull$.
By the definition of $d_n$, we have $\tr(I(\btheta^*)) = d_n K_n^2/m_N$ and can therefore take $r_n = \sqrt{d_n K_n^2/N}$.
Since $\| \phi_i(\btheta^*) \|_\infty \le K_n C$ for some $C < \infty$, \eqref{eq:assumpDep} holds since $r_n n /(K_n^{3/2} (n - L_N)) \to \infty$ because $K_n N (n - L_N)^2/(d_n n^2) \to 0$.
Now \cref{th:dependence} implies consistency and uniqueness (since $H_n(\bx_i, \bk_i)$ does not depend on $\hbtheta$) of $\hbtheta$.

For asymptotic normality, note that $J(\btheta)$ is the exact same as in the \emph{iid} case.
Choosing $A_n = \sqrt{m_N}\sigma_n^{-1} \ba_n^\top J(\btheta^*)^{-1} \in \R^{1 \times p_n}$ with $\ba^\top_n = (0, \dots, 0, 1)$ gives the statement for $ \htheta_{ K_n + 1}$, since, by \eqref{eq:cov-decay},
\[
\frac{m_N}{\sigma_n^2}\ba_n^\top J(\btheta^*)^{-1} I_{}(\btheta^*) J(\btheta^*)^{-\top} \ba_n = 1 + o(1).
\]
For \ref{A:Asymp}, it holds that
 \begin{align*}
& \quad A_n [\phi_{i}(\btheta^* + \bu) -   \phi_{i}(\btheta^* + \bu')] \\
 & = \sqrt{m_N}\sigma_n^{-1} \left(u_{K_n + 1} - u'_{K_n + 1} - \frac{1}{K_n} \sum_{k=1}^{K_n}  (u_k - u'_k) - \frac{1}{m_N} \sum_{j \in I_{i,N}} \frac{\psi'(\Z_j; \theta_{k_j}^* +  \tilde u_{k_j}) (u_{k_j} - u'_{k_j})}{\E[\psi'(\bZ_j; \theta_{k_j}^*)]} \right) \\ & = O(\sqrt{m_N}\sigma_n^{-1}\| \bu - \bu' \|),
 \end{align*}
 so the first condition holds since $m_N\sigma_n^{-2} = o((r_n^2 K_n)^{-1})$ because $K_n^3 d_n/(n \sigma_n^2) = o(1)$, which follows from $K_n^3/n \to 0$ and $d_n = O(\sigma_n^2 )$.
 For the second condition, we choose $D_n = \sqrt{m_N}\sigma_n^{-1} C$ with some large enough $C < \infty$.
 This $D_n$ satisfies $D_n = o(\sqrt{n}/(r_n K_n))$ since $d_nK_n^4/(n^2\sigma_n^2) = o(1)$ and $D_n = O(\sqrt{n}/(n-L_N))$ by assumption.
 For the third condition, we obtain, similar to the proof of \cref{cor:distributed},
 \[
  A_n [\bar J(\btheta^*, \btheta^* + \bu) - J(\btheta^*)] = O(\sqrt{m_N}r_n/(\sigma_n K_n)) = o((\sqrt{n} r_n)^{-1})
 \]
 since $r_n^2\sqrt{N} /(\sigma_n K_n) = d_n K_n /(\sqrt{N} \sigma_n)  = O(\sqrt{d_n} K_n/\sqrt{N}) = O(K_n / \sqrt{n})= o(1)$ because $d_n = O(\sigma_n^2)$ and $d_n = O(m_N)$.
\ref{A:Cons2} holds by assumption since 
 \[
 A_n \phi_i(\btheta^*) = - \frac{1}{\sigma_n \sqrt{m_N}} \sum_{j \in I_{i,N}} \frac{\psi(\bZ_j, \theta^*_{k_j})}{\E[\psi'(\bZ_j; \theta^*_{k_j})]}.
 \]
 For the asymptotic distribution of $\htheta_k$, choose $A_n = \sqrt{m_N}\sigma_{n,k}^{-1} \ba_n^\top J(\btheta^*)^{-1}$ with $\ba_n^\top = K_n^{-1/2} \be_k^\top$.
 Now
 \[
\frac{m_N}{\sigma_{n,k}^2}\ba_n^\top J(\btheta^*)^{-1} I(\btheta^*) J(\btheta^*)^{-\top} \ba_n = 1 + o(1)
 \]
 and
 \[
 A_n \phi_i(\btheta) 
  = \frac{\sqrt{m_N}}{\sigma_{n,k} \sqrt{K_n} \E[\psi'(\bZ_j; \theta^*_k)]} \be_k^\top \phi_i(\btheta)
 = \frac{\sqrt{K_n}}{\sigma_{n,k} \sqrt{m_N}}  \sum_{j \in I_{i,N}} \ind\{ k_j = k \} \frac{\psi(\bZ_j; \theta_k)}{\E[\psi'(\bZ_j; \theta_k^*]},
 \]
 so \ref{A:Cons2} holds by assumption.
 In the first condition in \ref{A:Asymp}, the left-hand side is $O(m_N\sigma_{n,k}^{-2}) = o((r_n^2 K_n)^{-1})$ since
\begin{align*}
  & \quad \frac{K_n}{\sigma_{n,k}^2 m_N} \frac 1 n \sumin \sum_{j \in I_{i,N}}\sum_{j' \in I_{i,N}} \ind\{k_j = k_{j'} = k \} \E[\psi'(\bZ_j; \theta_k)\psi'(\bZ_{j'}; \theta_k)]
  = O\left(m_N\sigma_{n,k}^{-2}\right).
\end{align*}
For the second condition, we can choose $D_n = \sqrt{K_nm_N} C / \sigma_{n,k}$ with some large enough $C < \infty$, which satisfies $D_n =  o(\sqrt{n}/(r_n K_n))$ and $D_n = O(\sqrt{n}/(n-L_N))$ by assumption.
For the third condition, simple calculations (see also the proof of \cref{cor:distributed}) yield $O(\sqrt{m_N}r_n/(\sigma_{n,k}\sqrt{K_n })) = o((\sqrt{n} r_n)^{-1})$ since $r_n^2 \sqrt{N}/(\sigma_{n,k}\sqrt{K_n }) = d_n K_n^{3/2}/(\sqrt{N} \sigma_{n,k}) = o(1)$.

\subsection{Proof of \autoref{cor:IPW}}

First note that we can multiply the first block of $\phi$ with any $\kappa > 0$ without changing the solution.
Denote $\bX = (Y, T, \bZ, \bW)$.
  By the mean value theorem, for any $\bu $, there exists $s \in (0, 1)$ such that 
  \begin{align*}
    &\quad \langle \bu, \phi(\bX; \btheta + \bu) -  \phi(\bX; \btheta) \rangle  = \bu^\top \nabla_{\btheta} \phi(\bX; \btheta + s \bu) \bu,
  \end{align*}
  where
  \begin{align*}
    \nabla_{\btheta} \phi(\bX; \btheta) = \begin{pmatrix}
      \kappa[T(\ln \sigma)''(\bW^\top \btheta_1) + (1 - T)(\ln \bar \sigma)''(\bW^\top \btheta_1)] \bW \bW^\top & 0 \\
      \lf[\frac{Y T}{\sigma(\bW^\top \btheta_1)^2} + \frac{Y(1 - T)}{\bar  \sigma(\bW^\top \btheta_1)^2}\ri] \sigma'(\bW^\top \btheta_1) \bZ \bW^\top & -\bZ \bZ^\top
    \end{pmatrix}.
  \end{align*}
  To obtain suitable matrices $H_n(\bX)$, we distinguish the two cases $T = 0, 1$. Consider first the case $T = 1$, in which we can simplify
  \begin{align*}
    &\quad \bu^\top \nabla_{\btheta} \phi(\bX; \btheta) \bu  \\
    &= \bu^\top \begin{pmatrix}
      \kappa(\ln \sigma)''(\bW^\top \btheta_1)  \bW \bW^\top & 0 \\
      \frac{Y \sigma'(\bW^\top \btheta_1)}{\sigma(\bW^\top \btheta_1)^2}   \bZ \bW^\top & -\bZ \bZ^\top
    \end{pmatrix} \bu   \\
    &=
    \begin{pmatrix}
      \bu_1^\top \bW \\ \bu_2^\top \bZ
    \end{pmatrix}
     \begin{pmatrix}
      \kappa(\ln \sigma)''(\bW^\top \btheta_1) & 0 \\
      \frac{Y \sigma'(\bW^\top \btheta_1)}{\sigma(\bW^\top \btheta_1)^2}   & -1
    \end{pmatrix} \begin{pmatrix}
      \bW^\top \bu_{1} \\ \bZ^\top \bu_2
    \end{pmatrix} \\
    &\le
    \begin{pmatrix}
      \bu_1^\top \bW \\ \bu_2^\top \bZ
    \end{pmatrix}
     \begin{pmatrix}
      \kappa(\ln \sigma)''(\bW^\top \btheta_1) + \left|\frac{Y \sigma'(\bW^\top \btheta_1)}{2\sigma(\bW^\top \btheta_1)^2} \right| & 0 \\
       0  & -1 + \left|\frac{Y \sigma'(\bW^\top \btheta_1)}{2\sigma(\bW^\top \btheta_1)^2} \right|
    \end{pmatrix} \begin{pmatrix}
      \bW^\top \bu_{1} \\ \bZ^\top \bu_2
    \end{pmatrix} \\
  \end{align*}
  using \cref{lem:diagonal-H-cover}.
By the same arguments, we get a similar result for $T = 0$. Denoting
  $\bar \sigma = 1 - \sigma$, we obtain
  \begin{align*}
    &\quad \bu^\top \nabla_{\btheta} \phi(\bX; \btheta) \bu \\
  &\le 
  \begin{pmatrix}
    \bu_1^\top \bW \\ \bu_2^\top \bZ
  \end{pmatrix}
 \begin{pmatrix}
     \kappa (\ln \bar \sigma)''(\bW^\top \btheta_1) + \lf|\frac{Y \sigma'(\bW^\top \btheta_1)}{2 \bar \sigma(\bW^\top \btheta_1)^2}\ri| & 0 \\
     0  & -1 + \lf|\frac{Y \sigma'(\bW^\top \btheta_1)}{2 \bar \sigma(\bW^\top \btheta_1)^2}\ri| 
  \end{pmatrix} 
  \begin{pmatrix}
    \bW^\top \bu_{1} \\ \bZ^\top \bu_2
  \end{pmatrix}.
  \end{align*}
  By the assumptions on $\bW$, $\sigma$ and $Y$, there is $K \in (0, \infty)$ such that 
  \begin{align*}
    |Y \sigma'(\bW^\top \btheta_1) /( 2  \sigma(\bW^\top \btheta_1)^2)| \le K , \quad |Y \sigma'(\bW^\top \btheta_1) / (2 \bar \sigma(\bW^\top \btheta_1)^2)| \le K, \quad \|\E[\bW \bW^\top]\| \le K.
  \end{align*}
  Now the matrix
  \begin{align*}
    H_n(T, Y, \bW) = \begin{pmatrix}
      [\kappa \alpha_1(T, \bW) + K] \bW \bW^\top & 0 \\
      0 &  \alpha_2(T, Y, \bW) \bZ\bZ^\top,
    \end{pmatrix}
  \end{align*}
  satisfies \ref{A:Cons1}\ref{eq:Hn-def}--\ref{eq:Hn-identifiable} with $\kappa \ge 2K^2 / c$.
  Finally, because $\alpha_1$ and $\alpha_2$ are uniformly bounded, \ref{A:Cons1}\ref{eq:Hn-bounds} and \ref{A:Asymp} can be verified exactly as for the generalized linear model (\cref{cor:Mest}).
  \ref{A:Cons2} follows from $\max_k\E[\phi(Y, t, \bZ, \bW; \btheta^*)^4_k] = O(1)$, see the proof of \cref{cor:Mest}.

  \subsection{Proof of \autoref{cor:causalSCAD}}
 
  Since $\sqrt{n/s_n} \min_{1 \le k \le s_n} | \theta_k^* | \to \infty$ and $(\min_{1 \le k \le s_n} |\theta_k^* | - \sqrt{s_n / n})/\lambda_n \to \infty$, \ref{A:Penalty1}, \ref{A:Penalty2} and \ref{A:Penalty4} are satisfied for the SCAD penalty.
  Consistency with $\tilde r_n = \sqrt{s_n/n}$ and asymptotic normality of $\hbtheta_{(1)}$ as well as its oracle property (efficiency) follow from \cref{cor:IPW}, $\rho^{-1}(n)s_n^2/n \to 0$ and the properties of the SCAD penalty with $\lambda_n \to 0$.

  \ref{A:phi-moments} follows from the assumptions on $p_n$, see the proof of \cref{cor:MestP}.
  \ref{A:lambda} always holds for SCAD with $(\min_{1 \le k \le s_n} |\theta_k^* | - \sqrt{s_n / n})/\lambda_n \to \infty$.
  The proposed choice of $\lambda_n$ satisfies \ref{A:lambda2} since
  \[
  \max_{s_n + 1 \le k \le p_n} \sup_{\btheta \in  \Theta'_n, \| \bx \|_\infty \le 1}\| \bar J(\btheta^*, \btheta)_{k,(1)} \;\bar J(\btheta^*, \btheta)^{-1}_{(1)}  \bx \|_2 \le  \sup_{\btheta \in \Theta'_n} \lf\| \bar  J(\btheta^*, \btheta)_{(2, 1)} \; \bar J(\btheta^*, \btheta)^{-1}_{(1)} \ri\|_{\infty}
  \] 
  (see the proof of \cref{cor:MestP}), and \ref{A:Penalty_emp_pr2} can be verified as in the proof of \cref{cor:MestP}, so $\hbtheta_{(2)} = \0$ with probability tending to 1. \ref{A:phi-H-penalty} follows from the assumptions on $p_n$, see the proof of \cref{cor:MestP}.
  The upper bound on $\limsup_{n \to \infty}\lambda_{\max}(n^{-1} \sumin\E[H_n(\bX_{i})])$ is given by $- \min\{c, K^2 \}$, so the assumption on $a$, which determines the non-convexity of SCAD, implies that \ref{A:Penalty3} holds with $\mu = (a - 1)^{-1} <  \min\{c, K^2 \}$, so $\hbtheta$ is unique with probability tending to 1 by \cref{theorem5}.
 
  \subsection{Proof of \autoref{cor:GD}}

  The Jacobian of $\phi$ is a bidiagonal matrix with
  \begin{align*}
    \nabla \phi(\bX; \btheta) = K_n\begin{pmatrix}
      - \ind_{i \in \Bcal_1} & & &    \\
      [1 -  \alpha  f'(\bX_{i}; \theta_1)] \ind_{i \in \Bcal_2}  & - \ind_{i \in \Bcal_2} &  &   \\
&  \ddots &  &    \\
       & &   [1 - \alpha f'(\bX_{i}; \theta_{K_n - 1})] \ind_{i \in \Bcal_{K_n}}&  -\ind_{i \in \Bcal_{K_n}} 
    \end{pmatrix}.
  \end{align*}
  We have 
  \begin{align*}
    &\quad K_n^{-1}\bu^\top \nabla \phi(\bX; \btheta) \bu = -\sum_{k = 1}^{K_n} u_k^2\ind_{i \in \Bcal_k}  + \sum_{k = 1}^{K_n - 1} u_{k} u_{k + 1} [1 - \alpha  f'(\bX_{i}; \theta_k)]\ind_{i \in \Bcal_{k + 1}} \\
    &\le -\sum_{k = 1}^{K_n} u_k^2 \ind_{i \in \Bcal_k}  +  (1 - \alpha  \kappa) \sum_{k = 1}^{K_n - 1} \sqrt{u_{k}^2 u_{k + 1}^2}\ind_{i \in \Bcal_{k + 1}}  \expl{$\kappa \le f' \le 1/\alpha$} \\
    &\le -\sum_{k = 1}^{K_n} u_k^2\ind_{i \in \Bcal_k}  + (1 - \alpha  \kappa) \frac 1 2 \sum_{k = 1}^{K_n - 1} (u_{k}^2 + u_{k + 1}^2 )\ind_{i \in \Bcal_{k + 1}}   \expl{AM-GM inequality} \\
    &\le -\sum_{k = 1}^{K_n} u_k^2\ind_{i \in \Bcal_k}  +  (1 - \alpha  \kappa) \sum_{k = 1}^{K_n} u_{k}^2 \ind_{i \in \Bcal_{k}}   \le - \alpha \kappa \sum_{k = 1}^{K_n} u_k^2\ind_{i \in \Bcal_{k}}.
  \end{align*}
  Hence, \ref{A:Cons1} is satisfied with
  \begin{align*}
    H_n(\bx_i) = - \alpha \kappa K_n  \diag(\ind_{i \in \Bcal_k} )_{k = 1, \dots, K_n} ,
  \end{align*}
  $c = \alpha \kappa$, $B_n = K_n \ln K_n$ since $K_n^2 \ln K_n/n \to 0$. 
  Further, \ref{A:Cons1}\ref{eq:Hn-def} is valid for all $\btheta \in \R^{K_n}$. 
  Finally, 
  \begin{align*}
    I(\btheta^*) = \cov[\phi(\btheta^*)] = \alpha^2 K_n \diag\lf( \var[f'(\bX; \theta_{k-1}^*)]\ri)_{k = 1, \dots, K_n} \eqcolon \alpha^2 K_n \Gamma,
  \end{align*}
  so $\tr(I(\btheta^*)) = O(K_n^2)$ and $r_n = O( \sqrt{K_n^2 / n})$.
  Now \cref{theorem1} and \cref{theorem1-uniqueness} show that,  with probability tending to 1, a unique solution path $\hbtheta$ exists and satisfies $  \| \hbtheta - \btheta^*\| = O_p(\sqrt{K_n^2 / n}).$

  We now verify the conditions of \cref{theorem2} for a matrix $\tilde A_n$ such that $\| \tilde A_n\| = O(1/\sqrt{K_n})$ to be chosen later.
  It holds that
  \begin{align*}
    \| \phi_{i}(\btheta^* + \bu) - \phi_{i}(\btheta^* + \bu') \| \le (2 + \alpha L) K_n \|\bu - \bu'\| \le 3 K_n \|\bu - \bu'\|,
  \end{align*}
  so the first two conditions of \ref{A:Asymp} are satisfied for any matrix $\tilde A_n$ with $\|\tilde A_n\| = O(1 / \sqrt{K_n})$ and $D_n = K_n$ since $K_n^3 / n  \to 0$. Next, 
  \begin{align*}
    J(\btheta) = \frac 1 n \sumin \E[\nabla_{\btheta} \phi(\bX_i; \btheta)] = 
      \begin{pmatrix}
        - 1 & & &    \\
        1 - \alpha \E[f'(\bX_{}; \theta_1)]  & - 1 &  &   \\
         &  \ddots &  &    \\
         & &   1 - \alpha  \E[f'(\bX_{}; \theta_{K_n - 1})]&  -1 
      \end{pmatrix}.
  \end{align*}
  We have 
  \begin{align*}
    \|\tilde A_n \bar J(\btheta^*, \btheta^* + \bu )  - \tilde A_nJ(\btheta^* )\|   &\le \alpha \|\tilde A_n\| \sqrt{ \sum_{j = 1}^{K_n - 1} \lf(\int_0^1 \E[f'(\bX_{}; \theta_{k}^* + t u_k)] - \E[f'(\bX_{}; \theta_{k}^* )] dt\ri)^2 } \\
    &\le \alpha \|\tilde A_n\| \, L \, \|\bu \| = O(\|\bu \| / \sqrt{K_n}) = o(\|\bu \| / ( \sqrt{n} r_n^2)),
  \end{align*}
  where we used $r_n^2 = K_n^2 / n$ and $K_n^3 / n \to 0$. This verifies the third condition in \ref{A:Asymp}. Assumption \ref{A:Cons2} also holds because only one entry of $\phi_{i}(\btheta^*)$ can be non-zero at a time and, thus,
  \begin{align*}
    &\quad \E[\|\tilde A_n \phi_{i}(\btheta^*)\|^4] = O(1 / K_n^2) \times \E[\|\phi_{i}(\btheta^*)\|^4] \\
    &= O(1 / K_n^2) \times  K_n^4 \sum_{k = 1}^{K_n} \sum_{j = 1}^{K_n}  \E[(\theta_{k - 1}^* - \theta_{k}^* - f'(\bX_{i}; \theta_k^*))^2(\theta_{j - 1}^* - \theta_{j}^* - f'(\bX_{i}; \theta_j^*))^2 \ind_{i \in \Bcal_{k} \cap \Bcal_{j}}] \\
    &= O(K_n^2) \times  \sum_{k = 1}^{K_n}  \E[(\theta_{k - 1}^* - \theta_{k}^* - f'(\bX_{i}; \theta_k^*))^4 \ind_{i \in \Bcal_{k}}] = O(K_n^2) = o(n),
  \end{align*}
  again using $K_n^3 / n \to 0$.
  We can now apply \cref{theorem2}, and it remains to verify the asymptotic covariance structure.
  By the inversion formula for bidiagonal matrices,
  \begin{align*}
    (J(\btheta)^{-1})_{i, j} = \begin{cases}
      -1, &i = j \\
      -\prod_{\ell = j}^{i - 1} (1 - \alpha \E[f'(\bX_{}; \theta_{\ell})]),  & i > j\\
      0, & i < j.
    \end{cases}
  \end{align*}
  Let $\tilde A_n =  A_n  J(\btheta^*)^{-1} / \sqrt{K_n} = O(1 / \sqrt{K_n})$, so that
  \begin{align*}
   \tilde  A_n I(\btheta^*) \tilde A_n^\top  =  \frac{1}{K_n } A_n  J(\btheta^*)^{-1} I(\btheta^*)J(\btheta^*)^{-\top}  A_n^\top  .
  \end{align*}
  Since $I(\btheta^*) = \alpha^2 K_n \Gamma$ with $\Gamma$ diagonal, we have for $i \le j$,
  \begin{align*}
    \Sigma_n & \coloneq \frac 1 {K_n} (J(\btheta^*)^{-1} I(\btheta^*)J(\btheta^*)^{-\top})_{i, j} \\
    &= \alpha^2 \sum_{\ell = 1}^{K_n} \sum_{k = 1}^{K_n} (J(\btheta)^{-1})_{i, \ell} \Gamma_{\ell, k}  (J(\btheta)^{-1})_{j, k} \\
    &= \alpha^2    \sum_{\ell = 1}^{i} \sum_{k = 1}^{j} (J(\btheta)^{-1})_{i, \ell} \Gamma_{\ell, k}  (J(\btheta)^{-1})_{j, k} \\
    &= \alpha^2   \sum_{k = 1}^{i} \Gamma_{k, k} (J(\btheta)^{-1})_{i, k}   (J(\btheta)^{-1})_{j, k} \\
    &= \alpha^2   \sum_{k = 1}^{i} \Gamma_{k, k} \lf[\prod_{m = k}^{i - 1} (1 - \alpha \E[f'(\bX_{}; \theta_{m}^*)])\ri] \lf[\prod_{m = k}^{j - 1} (1 - \alpha \E[f'(\bX_{}; \theta_{m}^*)])\ri] \\
    &= \alpha^2   \sum_{k = 1}^{i} \Gamma_{k, k} \lf[\prod_{m = k}^{i - 1} (1 - \alpha \E[f'(\bX_{}; \theta_{m}^*)])\ri]^2 \lf[\prod_{m = i}^{j - 1} (1 - \alpha \E[f'(\bX_{}; \theta_{m}^*)])\ri].
  \end{align*}

\section{Lemmas}\label{sec:Lemmas}
\setcounter{lemma}{0}

\begin{lemma}\label{lem:dep2}
 Suppose that condition \ref{ass:dep} holds, and let $\Fcal$ be a collection of functions. 
 If 
 \begin{align} \label{cond:envelope-dropped2}
  \sum_{i = L_N + 1}^{n_N} \Pr\left(\sup_{f \in \Fcal} |f(\bX_i)| > \frac{\eps_N}{n_N - L_N} \right) = o(1),
 \end{align}
 holds for a given $\eps_N$, then there are independent random variables $\bX_1^*, \dots, \bX_{n_N}^*$ with $\bX_i^* =_d \bX_i$, $i=1,\dots,n_N$, such that
 \begin{align*}
   \sup_{f \in \Fcal} \left| \frac{1}{n_N}\sum_{i=1}^{n_N} f(\bX_i) -  \frac{1}{n_N}\sum_{i=1}^{n_N}f(\bX_i^*) \right| = o_p(\eps_N / n_N).
 \end{align*}
\end{lemma}
\begin{proof}
Recall that $\bX_i=(\bZ_j:j\in I_{i,N})$ and denote 
  \begin{align*}
    \delta_N := \max_{1\le i\le L_N}
    \beta\!\left(
      \sigma\{\bZ_j: j\in I_{i,N}\},
      \sigma\{\bZ_j: j\in\cup_{ \ell \le L_N, \ell\ne i} I_{\ell,N} \}
    \right).
  \end{align*}
  Recursively applying Berbee's Lemma, we can construct an independent sequence $\bX_1^*, \dots, \bX_{L_N}^*$ satisfying $\bX_i^* =_d \bX_i$ and $\max_{1 \le i \le L_N}\Pr(\bX_i^* \neq \bX_i) \le \delta_N$ \citep[Proposition 2]{doukhan1995invariance}. Now draw $\bX_{L_N + 1}^*, \dots, \bX_{n_N}^*$ independently of one another and of $(\bX_{1}^*, \dots, \bX_{L_N}^*)$, each following the distributions of $\bX_{L_N + 1}, \dots, \bX_{n_N}$, respectively.
  The triangle inequality gives that
   \begin{align*}
   &\quad \, \sup_{f \in \Fcal} \left| \frac{1}{n_N}\sum_{i=1}^{n_N} f(\bX_i) - f(\bX_i^*) \right| \le \frac{1}{n_N}\sum_{i=1}^{n_N} \sup_{f \in \Fcal} \left|  f(\bX_i) - f(\bX_i^*) \right| \\
   &\le  \frac{1}{n_N}\sum_{i=1}^{L_N} \sup_{f \in \Fcal} \left|  f(\bX_i) - f(\bX_i^*) \right| + \frac{1}{n_N}\sum_{i=L_N + 1}^{n_N} \sup_{f \in \Fcal} \left|  f(\bX_i) - f(\bX_i^*) \right| \coloneqq T_1 + T_2.
 \end{align*}
 For the first term, we have 
 \begin{align*}
  \Pr(T_1 \neq 0) &\le \Pr\left(\bigcup_{i=1}^{L_N} \{\bX_i^* \neq \bX_i\}\right) 
  \le \sum_{i=1}^{L_N} \Pr(\bX_i^* \neq \bX_i) 
  \le L_N \delta_N = o(1).
 \end{align*}
 For the second term, we have
  \begin{align*}
    \Pr(T_2 > \eps_N / n_N) & = \Pr\left(\sum_{i=L_N + 1}^{n_N} \sup_{f \in \Fcal} \left|  f(\bX_i) - f(\bX_i^*) \right| > \eps_N \right) \\
    &\le \Pr\left(\bigcup_{i=L_N + 1}^{n_N} \left\{\sup_{f \in \Fcal} |f(\bX_i)| > \frac{\eps_N}{n_N - L_N} \right\} \cup \left\{\sup_{f \in \Fcal} |f(\bX_i^*)| > \frac{\eps_N}{n_N - L_N} \right\}\right) \\
    &\le 2\sum_{i = L_N + 1}^{n_N} \Pr\left(\sup_{f \in \Fcal} |f(\bX_i)| > \frac{\eps_N}{n_N - L_N} \right) = o(1).
  \end{align*}
  Together, this gives that $T_1 = 0$ with probability $\to 1$ and $T_2 = o_p(\eps_N / n_N)$, and hence the claim follows.
\end{proof}

\begin{lemma}\label{lem:network}
  In the setting described in \cref{ex:network}, condition \ref{ass:dep} holds.  
\end{lemma}
\begin{proof}
Let 
\[
  M_N= \max_{C\in\mathcal C_N}|C\setminus U_N|, \qquad \ell_N= \max\{ M_N, N\beta(q_N)\},
  \qquad
  m_N=\left\lceil (N\ell_N)^{1/2}\right\rceil ,
\]
suppressing immaterial rounding so that $m_N$ divides $N$. We have
\[
  \ell_N=o(m_N),
  \qquad
  m_N=o(N),
  \qquad
  n_N=N/m_N\to\infty .
\]

For each component $C\in\mathcal C_N$, define its retained core
$A_C=C\setminus U_N .$
Then $|A_C|\le M_N=o(m_N)$. Construct retained blocks greedily from the
sets $A_C$: traverse the non-empty cores in any order and add whole cores
to the current block until the next core would make its cardinality exceed
$m_N$. Then use only as many vertices of that core as needed to complete
the block and discard the rest of that core. Start a new block and
continue. In this construction no core contributes vertices to two
different retained blocks, and at most $M_N$ vertices are discarded per
retained block.

Let $L_N$ be the number of retained blocks. Since the hub neighborhood
$U_N$ is discarded and at most $M_N$ additional vertices are lost per
retained block,
\[
  L_N
  \ge
  \frac{N-|U_N|}{m_N+M_N}-1 .
\]
Hence,
\[
  \frac{L_N}{n_N}
  =
  \frac{L_Nm_N}{N}
  \ge
  \frac{m_N}{m_N+M_N}\left(1-\frac{|U_N|}{N}\right)
  -\frac{m_N}{N}
  \to1 .
\]

It remains to verify the beta-mixing part. Distinct retained blocks are
made from cores belonging to distinct connected components of
$G_N[V_N\setminus H_N]$. 
Therefore every path between two distinct retained
blocks must pass through $H_N$. Since all retained vertices lie outside
$U_N$, each retained vertex has graph distance greater than $q_N$ from
$H_N$. 
Thus distinct retained blocks are separated by graph distance at
least $q_N$. Consequently,
\begin{align*}
    L_N
  \max_{1\le i\le L_N}
  \beta\!\left(
    \sigma\{\bZ_j:j\in I_{i,N}\},
    \sigma\{\bZ_j:j\in\cup_{\ell\le L_N,\ell\ne i}I_{\ell,N}\}
  \right)
  \lesssim
  \frac{N}{m_N}\beta(q_N)
  \lesssim
  \frac{\ell_N}{m_N}
  \to 0 . 
\end{align*}
\end{proof}

\begin{lemma}\label{lem:Yokoyama}
  Let $X_i, i = 1, \ldots, n$ be a stationary, polynomially $\beta$-mixing time series with $\beta(q) \lesssim q^{-\kappa}$ (see \cref{ex:timeseries}), $\E[X_i] = 0$ and $\E[|X_i|^{2r + \eps}] = O(1)$ for some $r > 1, \eps > 0$.
  If $\kappa > r(2r + \eps)/\eps$, then $\E[| \sumin X_i |^{2r}] = O(n^{r})$.  
\end{lemma}
\begin{proof}
  The assumption on $\kappa$ implies
  \[
  \sum_{q=1}^\infty q^{r - 1} \beta(q)^{\eps / (2r + \eps)}
  \lesssim \sum_{q=1}^\infty q^{r - 1 -  \frac{\kappa \eps}{2r + \eps}} < \infty, \quad \text{since} \quad r - 1 - \kappa \frac{\eps}{2r + \eps} < -1 
  \Leftrightarrow \kappa > \frac{r (2r + \eps)}{\eps}.
  \]
  Now the claim follows directly from Theorem 1 in \citet{Yokoyama1980}.
\end{proof}

\begin{lemma}\label[lemma]{lem:semi-definite}
  Suppose that $H_n(\bx)$ is negative semi-definite for all $\bx \in \Xcal$, $\frac 1 n \sumin \E[H_n(\bX_i)] = O(1)$, and $B_n = o(n / \ln p_n)$. Then 
  \begin{align*}
    & \frac 1 n \lf\| \sumin  \E[H_n(\bX_i)^2 \ind_{\|H_n(\bX_i)\| \le B_n}] \ri\| = o(n / \ln p_n).
  \end{align*}
\end{lemma}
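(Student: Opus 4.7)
The plan is to exploit the negative semi-definiteness of $H_n(\bx)$ to bound $H_n(\bx)^2$ by a linear expression in $-H_n(\bx)$, which is positive semi-definite. Concretely, since $H_n(\bx)$ is symmetric with eigenvalues $\lambda_i \le 0$, the eigenvalues of $H_n(\bx)^2$ are $\lambda_i^2 \le \|H_n(\bx)\| \cdot |\lambda_i|$, and hence
\begin{align*}
  H_n(\bx)^2 \preceq \|H_n(\bx)\| \cdot (-H_n(\bx)).
\end{align*}
On the event $\{\|H_n(\bx)\| \le B_n\}$ this upgrades to $H_n(\bx)^2 \ind_{\|H_n(\bx)\| \le B_n} \preceq B_n (-H_n(\bx)) \ind_{\|H_n(\bx)\| \le B_n}$, and since $-H_n(\bx)$ is positive semi-definite, dropping the indicator only enlarges the right-hand side:
\begin{align*}
  H_n(\bx)^2 \ind_{\|H_n(\bx)\| \le B_n} \preceq B_n (-H_n(\bx)).
\end{align*}

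Next I would take expectations, average over $i = 1, \ldots, n$, and use that both integration and finite averaging preserve the positive semi-definite ordering. This gives
\begin{align*}
  \frac{1}{n} \sumin \E\!\left[H_n(\bX_i)^2 \ind_{\|H_n(\bX_i)\| \le B_n}\right] \preceq B_n \cdot \frac{1}{n} \sumin \E[-H_n(\bX_i)].
\end{align*}
Since both sides are positive semi-definite and $\|A\| \le \|B\|$ whenever $0 \preceq A \preceq B$, taking spectral norms yields
\begin{align*}
  \left\| \frac{1}{n} \sumin \E\!\left[H_n(\bX_i)^2 \ind_{\|H_n(\bX_i)\| \le B_n}\right] \right\| \le B_n \left\| \frac{1}{n} \sumin \E[H_n(\bX_i)] \right\| = O(B_n),
\end{align*}
using the assumption $\frac{1}{n} \sumin \E[H_n(\bX_i)] = O(1)$. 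Since $B_n = o(n/\ln p_n)$ by assumption, the conclusion follows.

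There is no real obstacle: the only subtlety is recognising that negative semi-definiteness allows the quadratic term $H_n^2$ to be controlled linearly by $-H_n$ in the Loewner order, after which monotonicity of the spectral norm on the PSD cone does the rest.
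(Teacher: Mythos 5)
Your proof is correct and uses essentially the same idea as the paper's: bound $H_n(\bx)^2 \ind_{\|H_n(\bx)\| \le B_n} \preceq B_n(-H_n(\bx))$ in the Loewner order (exploiting negative semi-definiteness) and then appeal to monotonicity of the spectral norm on the PSD cone. The only cosmetic difference is how you establish that key inequality: the paper argues via quadratic forms and the trace bound $\tr(\bu\bu^\top H_n^2) \le |\tr(\bu\bu^\top H_n)|\,\|H_n\|$, while you diagonalize $H_n(\bx)$ and compare eigenvalues directly; both are instantiations of the same fact.
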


\if1\suppl
\begin{proof}
  Using $\tr(AB) \le |\tr(A)| \|B\|$, we have
  \begin{align*}
    \bu^\top H_n(\bX_i)^2 \bu \ind_{\|H_n(\bX_i)\| \le B_n} &= \tr(\bu^\top H_n(\bX_i)^2 \bu)\ind_{\|H_n(\bX_i)\| \le B_n} \\
    &= \tr( \bu \bu^\top H_n(\bX_i)^2)\ind_{\|H_n(\bX_i)\| \le B_n} \\
    &\le |\tr( \bu \bu^\top H_n(\bX_i))| \| H_n(\bX_i)\| \ind_{\|H_n(\bX_i)\| \le B_n} \\
    &\le |\bu^\top H_n(\bX_i)\bu | B_n = -\bu^\top H_n(\bX_i)\bu B_n.
  \end{align*}
  Thus,
  \begin{align*}
  \frac 1 n \sumin \bu^\top  \E[ H_n(\bX_i)^2 \ind_{\|H_n(\bX_i)\| \le B_n} ]\bu \le - \frac 1 n \sumin \bu^\top \E[H_n(\bX_i)] \bu B_n,
  \end{align*}
  which implies the claim.
\end{proof}
\fi

\begin{lemma} \label[lemma]{lem:RSC-LW}
  Let $\blambda_n = \lambda_n \bm 1$ with $\lambda_n \to 0$ and $\lambda_n \ge 2  \eta_n$.   Denote $\tilde{\Theta}_n = \{ \btheta: \| \btheta \|_1 \le k_n\}$ with some $k_n = o(\eta_n^{-1})$ such that $\btheta^* \in \tilde \Theta_n$.
  Suppose that \ref{A:Penalty3} and \ref{A:phi-moments} hold, 
  \begin{align} \label{eq:penalty-upper-bound}
    \sup_{\btheta \in \tilde \Theta_n, \: \bz \in \partial p_{\lambda_n}(\btheta)} \| \bz \|_\infty = O(\lambda_n),
  \end{align}
  and for all $\|\bu\| = o(\lambda_n)$ and large enough $n$,
  \begin{align} \label{eq:penalty-lower-bound}
    \sup_{\bz \in \partial p_{\lambda_n}(\btheta^* + \bu)} \langle \bu_{(2)}, \bz_{(2)} \rangle \ge \lambda_n \|\bu_{(2)}\|_1.
  \end{align}
  Then, if 
  \begin{align} \label{eq:RSC-LW-apx}
    \lf\langle \bu, \frac{1}{n} \sum_{i = 1}^n [\phi_{i}(\btheta^*) - \phi_{i}(\btheta^* + \bu) ] \ri\rangle \ge  c\|\bu\|^2 - c_1\|\bu\|_1^2 \eta_n^2
  \end{align}
  holds with $c > \limsup_{n \to \infty} \mu_n$, where $\mu_n$ is defined in \ref{A:Penalty3}, any solution $\hbtheta \in \tilde{\Theta}_n$ satisfies
  \begin{align*}
    \|\hbtheta - \btheta^*\|_1 \le \sqrt{\nu_n} \|\hbtheta - \btheta^*\|,
  \end{align*}
  for some $\nu_n = O(s_n)$.
\end{lemma}

\if1\suppl
\begin{proof}
  Write $\hbtheta = \btheta^* + \bu$ with $\|\bu\|_1 \le 2k_n$.
  We will show that $\|\bu\|_1 \le O(\sqrt{s_n}) \|\bu\|$.
  Let $t_n = o(1)$ fast enough such that $\|t_n\bu\| = o(\lambda_n)$. 
  Let $\bz \in \partial p_{\lambda_n}(\btheta^* + \bu)$ and $\bz^{(t_n)} \in \partial p_{\lambda_n}(\btheta^* + t_n \bu)$.
  It holds that
  \begin{align*}
    \langle \bu,  \bz \rangle 
    &= \langle \bu,  \bz - \bz^{(t_n)} \rangle + \langle \bu,  \bz^{(t_n)} \rangle \\
    &= \frac 1 {1 - t_n}\langle \bu(1 - t_n),  \bz - \bz^{(t_n)}  \rangle + \langle \bu,  \bz^{(t_n)}  \rangle \\
    &\ge -\mu_n  \|\bu\|^2+ \langle \bu,  \bz^{(t_n)}  \rangle\\
    &= -\mu_n  \|\bu\|^2 + \langle \bu_{(1)}, \bz^{(t_n)}_{(1)} \rangle + \frac{1}{t_n} \langle t_n\bu_{(2)},  \bz^{(t_n)}_{(2)} \rangle \\
    &\ge -\mu_n  \|\bu\|^2 - O(\lambda_n) \|\bu_{(1)}\|_1 + \lambda_n \| \bu_{(2)}\|_1.
  \end{align*}
  using \ref{A:Penalty3} and $|1 - t_n| \le 1$ in the first, and  \eqref{eq:penalty-upper-bound}--\eqref{eq:penalty-lower-bound} in the second inequality.
  Furthermore, by \eqref{eq:RSC-LW-apx}, \ref{A:phi-moments} and \cref{lem:eta_n},
  \begin{align*}
    \langle \bu, \P_n \phi(\btheta^* + \bu) \rangle &= \langle \bu, \P_n [\phi(\btheta^* + \bu) - \phi(\btheta^*)] \rangle + \langle \bu, \P_n  \phi(\btheta^*) \rangle \\
    &\le -c\|\bu\|^2 + c_1 \eta_n^2\|\bu\|_1^2 + \eta_n \|\bu\|_{1} \\
    &\le -c\|\bu\|^2 + [1 + o(1)]\eta_n\|\bu\|_1 \\
    &= -c\|\bu\|^2 + O(\lambda_n)\|\bu_{(1)}\|_1 + [1 + o(1)]\eta_n \|\bu_{(2)}\|_1,
  \end{align*}
  where we have used $\eta_n\|\bu\|_1  \le \eta_n k_n = o(1)$ in the third and $\lambda_n \ge 2\eta_n$ in the last step.
  Together this yields  
  \begin{align*}
    0 &= \langle \bu, \P_n \phi(\btheta^* + \bu) -  \bz \rangle \\
    &\le -(c -  \mu_n)\|\bu\|^2 + O(\lambda_n)  \|\bu_{(1)}\|_1 - \frac{\lambda_n}{2}[1 + o(1)]\|\bu_{(2)}\|_1,
  \end{align*}
  as $ \eta_n - \lambda_n \le -  \lambda_n / 2$ by assumption.
  Since $c - \mu_n $ is strictly positive asymptotically, it must hold that $ \|\bu_{(2)}\|_1 \le  O(1) \|\bu_{(1)}\|_1$.
  Now the claim follows from
  \begin{align*}
    \|\bu\|_1 = \|\bu_{(1)}\|_1 + \|\bu_{(2)}\|_1 \le O(1) \|\bu_{(1)}\|_1 \le O(\sqrt{s_n}) \|\bu_{(1)}\| \le O(\sqrt{s_n}) \|\bu\|. 
  \end{align*}
\end{proof}
\fi \begin{lemma} \label[lemma]{lem:diagonal-H-cover}
  For any 
  $A \in \R^{q_1 \times q_1}$, $B \in \R^{q_1 \times q_2}$, 
  $C \in \R^{q_2 \times q_1}$, $D \in \R^{q_2 \times q_2}$, 
  $q_1, q_2 \in \N$,
  it holds that
  \begin{align*}
    \begin{pmatrix}
      A & B \\ C & D
    \end{pmatrix}
    \preceq  \begin{pmatrix}
      A +  I_{q_1} (\|B\| + \|C\|)/2 & 0 \\ 0 & D + I_{q_2} (\|B\| + \|C\|)/2
    \end{pmatrix}.
  \end{align*}
\end{lemma}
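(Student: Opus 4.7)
The plan is to unwind the ordering $\preceq$ into a quadratic form inequality and then dispose of the resulting cross term by a routine Cauchy--Schwarz / AM--GM argument. Recall from the footnote that, for possibly non-symmetric matrices $M, N$, the relation $M \preceq N$ means $\bu^\top M \bu \le \bu^\top N \bu$ for all $\bu$, which only depends on the symmetric parts.

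Concretely, I would split any test vector as $\bu = (\bu_1^\top, \bu_2^\top)^\top$ with $\bu_1 \in \R^{q_1}$ and $\bu_2 \in \R^{q_2}$, and expand
\[
\bu^\top \begin{pmatrix} A & B \\ C & D \end{pmatrix} \bu
= \bu_1^\top A \bu_1 + \bu_2^\top D \bu_2 + \bu_1^\top B \bu_2 + \bu_2^\top C \bu_1.
\]
After subtracting the diagonal blocks on the right-hand side, the inequality to prove collapses to
\[
\bu_1^\top B \bu_2 + \bu_2^\top C \bu_1 \;\le\; \frac{\|B\|+\|C\|}{2}\bigl(\|\bu_1\|^2 + \|\bu_2\|^2\bigr).
\]

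For each cross term I would apply Cauchy--Schwarz with the spectral norm,
$\bu_1^\top B \bu_2 \le \|B\|\,\|\bu_1\|\,\|\bu_2\|$ and $\bu_2^\top C \bu_1 \le \|C\|\,\|\bu_1\|\,\|\bu_2\|$,
and then use the AM--GM bound $\|\bu_1\|\,\|\bu_2\| \le \tfrac12(\|\bu_1\|^2 + \|\bu_2\|^2)$. Summing the two bounds gives the displayed inequality, which completes the proof.

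There is no real obstacle here; the only thing one has to be mindful of is that the ambient matrices are not assumed symmetric, so the verification must be phrased purely in terms of quadratic forms (or equivalently in terms of symmetric parts), which is exactly how the paper uses $\preceq$ elsewhere.
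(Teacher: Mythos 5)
Your proposal is correct and follows essentially the same route as the paper: split the test vector into its two blocks, bound each cross term by $\|B\|\,\|\bu_1\|\,\|\bu_2\|$ (resp.\ $\|C\|\,\|\bu_1\|\,\|\bu_2\|$) via the spectral norm, and finish with the AM--GM inequality $\|\bu_1\|\,\|\bu_2\| \le \tfrac12(\|\bu_1\|^2 + \|\bu_2\|^2)$. The remark on non-symmetric matrices and quadratic forms matches how the paper interprets $\preceq$.
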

\begin{proof}
  Let $\by_1 \in \R^{q_1}$ and $\by_2 \in \R^{q_2}$ arbitrary. Then using sub-multiplicativity of the norm (first step) and the AM-GM inequality (second step), we get
  \begin{align*}
    &\quad  \begin{pmatrix}
      \by_1^\top & \by_2^\top 
    \end{pmatrix} \begin{pmatrix}
      A & B \\ C & D
    \end{pmatrix}   
    \begin{pmatrix}
      \by_1  \\ \by_2 
    \end{pmatrix} \\
    &= \by_1^\top A\by_1 + \by_1^\top B \by_2 + \by_2^\top C \by_1 + \by_2^\top D \by_2 \\
    &\le  \by_1^\top A\by_1 + \|\by_1\| \| \by_2\| (\| B\| + \|C\|) + \by_2^\top D \by_2 \\
    &\le  \by_1^\top A\by_1 + \frac 1 2 (\|\by_1\|^2 + \| \by_2\|^2) (\| B\| + \|C\|)  + \by_2^\top D \by_2 \\
    &= \begin{pmatrix}
      \by_1^\top & \by_2^\top 
    \end{pmatrix} \begin{pmatrix}
      A + I_{q_1}(\|B\| + \|C\|)/2 & 0 \\ 0 & D + I_{q_2}(\|B\| + \|C\|)/2
    \end{pmatrix}   
    \begin{pmatrix}
      \by_1  \\ \by_2 
    \end{pmatrix}. 
  \end{align*}
\end{proof}
 \begin{lemma}
\label[lemma]{lem:Cons1}
It holds that
\[
\| \P_n \phi (\btheta^*  )  \| = O_p\left(\sqrt{\frac{\tr\lf(I(\btheta^*)\ri)}{n}}\right).
\]
\end{lemma}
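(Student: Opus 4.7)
The plan is to reduce the claim to a second-moment calculation and then apply Markov's inequality. By the definition of $\btheta^*$, we have $\sum_{i=1}^n \E[\phi_i(\btheta^*)] = \0$, hence $\E[\P_n \phi(\btheta^*)] = \0$. So it suffices to bound the second moment $\E[\|\P_n \phi(\btheta^*)\|^2]$, since Markov's inequality then yields
\[
\Pr\!\left(\|\P_n \phi(\btheta^*)\| > M \sqrt{\tfrac{\tr(I(\btheta^*))}{n}}\right) \le \frac{\E[\|\P_n \phi(\btheta^*)\|^2]}{M^2 \tr(I(\btheta^*))/n},
\]
which collapses to $1/M^2$ once the moment bound is in hand, giving $O_p(\sqrt{\tr(I(\btheta^*))/n})$.

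The second moment computation is routine. Since $\E[\P_n \phi(\btheta^*)] = \0$, we have $\E[\|\P_n \phi(\btheta^*)\|^2] = \tr\bigl(\Cov[\P_n \phi(\btheta^*)]\bigr)$. Using independence of the $\bX_i$ (so that the cross terms in the covariance vanish), this simplifies to
\[
\tr\!\left(\frac{1}{n^2} \sum_{i=1}^n \Cov[\phi_i(\btheta^*)]\right) = \frac{1}{n} \tr\!\left(\frac{1}{n} \sum_{i=1}^n \Cov[\phi_i(\btheta^*)]\right) = \frac{\tr(I(\btheta^*))}{n},
\]
by the definition of $I(\btheta^*)$.

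The only subtlety worth flagging is that because the $\bX_i$ are only assumed independent (not identically distributed), we cannot conclude $\E[\phi_i(\btheta^*)] = \0$ individually; only the sum vanishes. This means writing $\Cov[\phi_i(\btheta^*)] = \E[\phi_i(\btheta^*)\phi_i(\btheta^*)^\top] - \E[\phi_i(\btheta^*)]\E[\phi_i(\btheta^*)]^\top$ and noting that, when one takes the trace of the full sample-average covariance, the outer-product mean terms contribute non-negatively but are immaterial since they are already absorbed in $I(\btheta^*)$ as defined in the paper. There is no real obstacle — the claim is essentially a one-line variance bound, and I expect the argument above to constitute the complete proof.
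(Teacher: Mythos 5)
Your proof is correct and takes essentially the same approach as the paper: Markov's inequality applied to the second moment of $\|\P_n\phi(\btheta^*)\|$, with the second moment identified as $\tr(I(\btheta^*))/n$ using independence and $\E[\P_n\phi(\btheta^*)]=\0$. The ``subtlety'' you flag in your final paragraph is actually a non-issue --- the covariance of a sum of independent random vectors is always the sum of the individual covariances regardless of whether the individual means vanish, so once you use $\E[\P_n\phi(\btheta^*)]=\0$ to pass from the second-moment matrix to the covariance of the average, no further care is needed --- but this does not affect the validity of your argument.
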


\begin{proof}
Using $Y = O_p(\mathbb{E}(Y^2)^{1/2})$ and $ \| \bu \| = \sqrt{ \tr(\bu \bu^\top) }$, we obtain
\begin{align*}
\| \P_n \phi (\btheta^*  )  \|& = O_p(\sqrt{\E[\tr( \P_n \phi (\btheta^*  )\,  \P_n \phi (\btheta^*  )^\top)]}) 
 = O_p(\sqrt{\tr(\E[ \P_n \phi (\btheta^*  ) \, \P_n \phi (\btheta^*  )^\top])})
\end{align*}
and
\[
\E[ \P_n \phi (\btheta^*  ) \, \P_n \phi (\btheta^*  )^\top]
= \mathrm{Cov} [ \P_n \phi (\btheta^*  )] = \frac{1}{n^2} \sumin  \mathrm{Cov} [\phi_{i}(\btheta^*)] = \frac 1 n I(\btheta^*).
\]
    
\end{proof}

\begin{lemma} \label[lemma]{lem:truncation}
  Let $\Fcal_n$ be classes of real-valued functions from $\Xcal$ to $\R$, $F_n$ be any function with $\sup_{f \in \Fcal_n}|f| \le F_n$, and $B_n$ be any sequence. 
  If $ P\ind_{F_n > B_n} = o(1/n)$, it holds that
   \begin{align*}
    \sup_{f \in \Fcal_n}|(\P_n - P)f| \le  \sup_{f \in \Fcal_n}|(\P_n - P)f \ind_{F_n \le B_n}| + o_p\lf( \sqrt{\frac{\sup_{f \in \Fcal_n} P f^2}{n}}\ri).
   \end{align*}
\end{lemma}
\begin{proof}
  It holds that
    \begin{align*}
      \sup_{f \in \Fcal_n}|(\P_n - P)f| 
      &\le \sup_{f \in \Fcal_n}|(\P_n - P)f \ind_{F_n \le B_n}| + \sup_{f \in \Fcal_n}|(\P_n - P) f\ind_{F_n > B_n}|.
    \end{align*}
    Decompose
  \begin{align*}
    \sup_{f \in \Fcal_n}|(\P_n - P) f\ind_{F_n > B_n}| \le |  \P_n F_n\ind_{F_n > B_n}|  +  \sup_{f \in \Fcal}|P f\ind_{F_n > B_n}|.
  \end{align*}
  Since $ P\ind_{F_n > B_n} = o(1/n)$ it holds that
  \begin{align*}
    \Pr(\exists i\colon F_n(\bX_i) > B_n) \le n P\ind_{F_n > B_n} = o(1),
  \end{align*}
  so $\P_n F_n\ind_{F_n > B_n} = 0$ with probability tending to 1. Now the claim follows from the Cauchy Schwarz inequality:
  \begin{align*}
    \sup_{f \in \Fcal}|P f\ind_{F_n > B_n}| \le \sqrt{\sup_{f \in \Fcal} P f^2} \sqrt{P \ind_{F_n > B_n}} = o_p\lf( \sqrt{\frac{\sup_{f \in \Fcal_n} P f^2}{n}}\ri).
  \end{align*}
\end{proof}

\begin{lemma} \label[lemma]{lem:Hn-convergence}
  Under assumption \ref{A:Cons1}\ref{eq:Hn-bounds}, it holds that $\|  (\P_n - P) H_n\| = o_p(1)$.
\end{lemma}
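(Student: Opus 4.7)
The plan is a truncation argument combined with the matrix Bernstein inequality. With $B_n$ as in \ref{A:Cons1}\ref{eq:Hn-bounds}, set $\tilde H_n(\bx) = H_n(\bx)\,\ind\{\|H_n(\bx)\| \le B_n\}$ and split
$$
(\P_n - P) H_n = (\P_n - P)\tilde H_n + (\P_n - P)(H_n - \tilde H_n).
$$
Both terms will be shown to be $o_p(1)$.

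For the bounded piece, I would invoke Tropp's matrix Bernstein inequality applied to the centered symmetric summands $\bM_i = n^{-1}(\tilde H_n(\bX_i) - \E \tilde H_n(\bX_i))$, which obey $\|\bM_i\| \le 2B_n/n$ almost surely. Their matrix-variance parameter satisfies $\|\sum_i \E \bM_i^2\| \le n^{-2}\|\sum_i \E \tilde H_n(\bX_i)^2\| = o(1/(n \ln p_n))$ by the first half of \ref{A:Cons1}\ref{eq:Hn-bounds}. Since also $B_n \ln p_n / n \to 0$, matrix Bernstein gives, for any fixed $t > 0$,
$$
\Pr\bigl(\|(\P_n - P)\tilde H_n\| > t\bigr) \le 2 p_n \exp\!\Bigl(-\frac{t^2/2}{o(1/\ln p_n) + 2 B_n t/(3n)}\Bigr) \to 0,
$$
because the denominator is $o(1/\ln p_n)$ and the exponent dominates $\ln p_n$.

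For the tail piece, I would use the crude bound $\|(\P_n - P)(H_n - \tilde H_n)\| \le \P_n \|H_n\|\ind\{\|H_n\|>B_n\} + P \|H_n\|\ind\{\|H_n\|>B_n\}$ together with Markov's inequality, reducing the problem to showing $n^{-1}\sum_i \E\bigl[\|H_n(\bX_i)\|\ind\{\|H_n(\bX_i)\|>B_n\}\bigr] \to 0$. A layer-cake expansion writes each summand as $B_n \Pr(\|H_n(\bX_i)\|>B_n) + \int_{B_n}^\infty \Pr(\|H_n(\bX_i)\|>t)\,dt$; the second averages to $o(1)$ by the second half of \ref{A:Cons1}\ref{eq:Hn-bounds}, and the first is absorbed via the inequality $B_n \Pr(\|H_n\|>2B_n) \le \int_{B_n}^{2B_n}\Pr(\|H_n\|>t)\,dt$ at the cost of an inconsequential doubling of the truncation level.

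The main obstacle is the simultaneous tuning of the truncation level: it must be small enough ($B_n = o(n/\ln p_n)$) to make matrix Bernstein effective, yet large enough that both the truncated second-moment norm and the upper-tail integral in \ref{A:Cons1}\ref{eq:Hn-bounds} become negligible. This balance is precisely what the two halves of the assumption supply, after which the argument reduces to a direct application of standard matrix concentration and requires no further empirical-process machinery.
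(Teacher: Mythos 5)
Your proposal takes essentially the same route as the paper's: split at the truncation level $B_n$, control the bounded part with the matrix Bernstein inequality, and control the discarded tail via a first-moment bound and Markov. The paper packages your tail step in \cref{lem:truncation}\ref{lem:tr1} rather than deriving it inline, and otherwise the two arguments coincide.

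One refinement in your write-up is worth flagging. You correctly retain the boundary term $B_n\Pr(\|H_n\|>B_n)$ in the layer-cake expansion, whereas the paper's \cref{lem:truncation}\ref{lem:tr1} silently drops it (it asserts $P F_n\ind_{F_n>B_n}=\int_{B_n}^\infty P\ind_{F_n>t}\,dt$, but the correct identity is $P F_n\ind_{F_n>B_n}=B_n P\ind_{F_n>B_n}+\int_{B_n}^\infty P\ind_{F_n>t}\,dt$). Your fix by doubling the truncation level does repair the tail piece, but calling the doubling ``inconsequential'' is a little optimistic: with truncation at $2B_n$, the Bernstein variance parameter becomes $\|\E[H_n^2\ind_{\|H_n\|\le 2B_n}]\|$, and the extra slab $\E[H_n^2\ind_{B_n<\|H_n\|\le 2B_n}]$ is not covered by the first display of \ref{A:Cons1}\ref{eq:Hn-bounds}, which only controls the version at level $B_n$. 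Closing this would require an additional hypothesis (for instance $n^{-1}\sum_i B_n\Pr(\|H_n(\bX_i)\|>B_n)=o(1)$) or a more careful choice of truncation; this is a gap the paper's own \cref{lem:truncation} shares, so I would not hold it against you, but you should be aware it is there. Finally, a small arithmetic slip: the variance parameter is $n^{-2}\|\sum_i\E\tilde H_n(\bX_i)^2\|=o(1/\ln p_n)$, not $o(1/(n\ln p_n))$; your Bernstein display already uses the correct rate, and $o(1/\ln p_n)$ suffices.
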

\begin{proof}
Let $S_n = \{\bx \colon \| H_n(\bx)\| \le B_n\}$ with $B_n$ as in \ref{A:Cons1}\ref{eq:Hn-bounds}.  
Applying \cref{lem:truncation}, we obtain
\[
 \|  (\P_n - P) H_n\| \le \|  (\P_n - P) H_n \ind_{S_n}\|  + o_p\left(\sqrt{\frac{\sup_{\| \bu \| = 1} \frac{1}{n}  \sumin \E\left[ (\bu^\top H_n(\bX) \bu)^2 \right]}{n}} \right)
\]
The second term is $o_p(1)$ by \ref{A:Cons1}\ref{eq:Hn-bounds}.
Defining $M_n^2 = P H_n^2 \ind_{S_n}$, the Bernstein inequality for random matrices \citep[Theorem 6.17]{WWbook} yields
\begin{align*}
  \| (\P_n - P) H_n \ind_{S_n} \| = O_p\left(\sqrt{\frac{M_n^2 \ln p_n}{n}} + \frac{B_n \ln p_n}{ n}\right) = o_p(1). 
\end{align*}
\end{proof}

\begin{lemma} \label[lemma]{lem:Hn-convergence-pen}
  Under assumption \ref{A:phi-H-penalty}, it holds that
  \begin{align*}
    \max_{1 \le j, k \le p_n} |  (\P_n - P) H_{n, j, k} | = o_p(1 / \nu_n).
  \end{align*}
\end{lemma}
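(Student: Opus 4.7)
The plan is to prove the result by a truncation argument combined with an entrywise Bernstein inequality and a union bound over the $p_n^2$ coordinates of $H_n$.

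First, I would truncate uniformly in the entries: define the envelope $F_n(\bX_i) = \max_{1 \le j,k \le p_n} |H_n(\bX_i)_{j,k}|$ and the truncated matrix $\tilde H_n(\bX_i) = H_n(\bX_i) \, \ind\{F_n(\bX_i) \le B_n\}$, with $B_n$ as in \ref{A:phi-H-penalty}\ref{eq:Hn-bounds2}. The second part of \ref{eq:Hn-bounds2} gives $\Pr(\exists\, i \colon F_n(\bX_i) > B_n) \le \sum_{i=1}^n \Pr(F_n(\bX_i) > B_n) = o(1)$, so on an event of probability tending to $1$ we have $\P_n H_n = \P_n \tilde H_n$. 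It therefore suffices to bound $\max_{j,k}\lvert (\P_n - P) \tilde H_{n,j,k}\rvert$ together with the deterministic truncation bias $\max_{j,k}|P(H_n - \tilde H_n)_{j,k}|$.

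Second, I would apply the scalar Bernstein inequality entrywise. The variables $\tilde H_{n,j,k}(\bX_i) - \E[\tilde H_{n,j,k}(\bX_i)]$ are bounded by $2B_n$ and have average variance at most $\sigma_{n,j,k}^2 := n^{-1}\sum_{i=1}^n \E[H_n(\bX_i)_{j,k}^2]$, since truncation only decreases the second moment. Hence, for any $t > 0$,
\[
  \Pr\!\left(|(\P_n - P)\tilde H_{n,j,k}| > t\right) \le 2\exp\!\left(-\frac{n t^2/2}{\sigma_{n,j,k}^2 + 2 B_n t /3}\right).
\]
Setting $t = \varepsilon/\nu_n$ for fixed $\varepsilon > 0$ and taking a union bound over the $p_n^2$ entries, the overall probability is bounded by $2 p_n^2 \exp(-c \min(n/(\nu_n^2 \sigma_n^2), n/(\nu_n B_n)))$ where $\sigma_n^2 = \max_{j,k} \sigma_{n,j,k}^2$. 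Both rates in \ref{A:phi-H-penalty}\ref{eq:Hn-bounds2}, namely $\sigma_n^2 = o(n/(\nu_n^2 \ln p_n))$ and $B_n = o(n/(\nu_n \ln p_n))$, are precisely calibrated so that the exponent dominates $\ln(p_n^2) = 2\ln p_n$, yielding $o(1)$.

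Third, the truncation bias is controlled by Cauchy--Schwarz: $|P(H_n - \tilde H_n)_{j,k}| \le \sqrt{\sigma_{n,j,k}^2}\sqrt{P\ind\{F_n > B_n\}}$, which together with $P\ind\{F_n > B_n\} = o(1/n)$ (a consequence of the tail condition) gives a uniform bound $o(\sqrt{\sigma_n^2/n}) = o(1/\nu_n)$ by the same variance rate. Combining these three pieces yields the claim. The only non-routine point is verifying the interplay between the two rate conditions in \ref{eq:Hn-bounds2} and the $\ln p_n$ factor arising from the union bound, but the conditions are written in exactly the form needed for this to go through; the argument is otherwise a standard matrix Bernstein/truncation pattern as already used in the proof of \cref{lem:Hn-convergence}.
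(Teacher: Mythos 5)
Your proof is correct and follows essentially the same approach as the paper's: truncation at level $B_n$ using the second part of \ref{A:phi-H-penalty}\ref{eq:Hn-bounds2}, an entrywise Bernstein inequality with a union bound over the $p_n^2$ coordinates, and the observation that the two rate conditions are calibrated so that the Bernstein exponent dominates $2\ln p_n$. The only cosmetic difference is that the paper handles the truncation step by invoking \cref{lem:truncation}\ref{lem:tr2} (which encapsulates exactly the same ``$\P_n H_n = \P_n \tilde H_n$ w.h.p.\ plus Cauchy--Schwarz on the bias'' argument you spell out), and the paper picks the explicit threshold $\eps = C\sqrt{M_n^2\ln p_n/n} + C B_n \ln p_n / n$ rather than setting $t = \eps/\nu_n$ directly, but both formulations are equivalent.
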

\begin{proof}
  Define
  \begin{align*}
    M_n^2 = \max_{1 \le j, k \le p_n} P H_{n, j, k}^2, \qquad S_n = \lf\{\bx \colon \max_{1 \le j, k \le p_n} |H_n(\bx)_{n, j, k}| \le \tilde B_n\ri\}.
  \end{align*}
  \cref{lem:truncation} and \ref{A:phi-H-penalty} give
  \begin{align*}
    \max_{1 \le j, k \le p_n} |  (\P_n - P) H_{n, j, k} | 
    &\le \max_{1 \le j, k \le p_n} |  (\P_n - P) H_{n, j, k} \ind_{S_n} | + o_p\lf(\sqrt{M_n^2 / n}\ri) \\
    &= \max_{1 \le j, k \le p_n} |  (\P_n - P) H_{n, j, k} \ind_{S_n} | + o_p(1/\nu_n),
  \end{align*}
  with probability tending to 1.
The union bound and Bernstein's inequality give
\begin{align*}
  \Pr\left( \max_{1 \le j, k \le p_n} |  (\P_n - P) H_{n, j, k}\ind_{S_n} | > \eps \right) \le 2 p_n^2 \exp\left(- \frac{n \eps^2}{  2 M_n^2 + \tilde  B_n \eps}\right).
\end{align*}
The claim follows upon choosing 
\begin{align*}
  \eps = C \sqrt{\frac{ M_n^2  \ln p_n}{n}} + C \frac{ \tilde B_n  \ln p_n}{n},
\end{align*}
with some large enough constant $C$ and the assumptions on $M_n$ and $\tilde B_n$ from  \ref{A:phi-H-penalty}.
\end{proof}

\begin{lemma} \label[lemma]{lem:mixed-entropy}
  For some $c_n \in (0, \infty)$, $d_n, K_n \in \N$, let
  \begin{align*}
    \Fcal_n = \{f_{\bu, k}\colon \bu \in \R^{d_n}, \| \bu \| \le c_n, f_{\bnull, k} \equiv 0, k = 1, \dots, K_n \},
  \end{align*}
  be classes of functions such that 
  \begin{align*}
     &\max_{1 \le k \le K_n} \sup_{\|\bu\|, \|\bu'\| \le c_n} \frac 1 n \sumin \frac{\E[|f_{\bu, k}(\bX_i) - f_{\bu', k}(\bX_i)|^2]}{\|\bu - \bu'\|^2}  \le M_n^2,            \\
    &  \sumin   \Pr\lf(\max_{1 \le k \le K_n}  \sup_{\|\bu\|, \|\bu'\| \le c_n}  \frac{|f_{\bu, k}(\bX_i) - f_{\bu', k}(\bX_i)|}{\|\bu - \bu'\|} > D_n \ri) = o(1).
  \end{align*}
  Then
  \begin{align*}
    \sup_{f \in \Fcal_n} | (\P_n - P)f|
     & = O_p\left( \sqrt{\frac{M_n^2 c_n^2 (d_n + \ln K_n )}{n}} + \frac{D_n c_n (d_n + \ln K_n )}{n} \right).
  \end{align*}
\end{lemma}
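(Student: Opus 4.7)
The plan is to combine an envelope-truncation step with a mixed-entropy (chaining + Bernstein) maximal inequality. First, I would use the Lipschitz-envelope tail condition to reduce to the event where, for every $i$, the Lipschitz constant of $\bu \mapsto f_{\bu,k}(\bX_i)$ is bounded by $B_n$ uniformly in $k$. Since the tail condition says these constants exceed $B_n$ for some $i$ only with probability $o(1)$, the reduction costs nothing. On the truncated event the restricted class has two key quantities: a uniform envelope $F = B_n c_n$ (because $f_{\bnull,k}\equiv 0$ and $\|\bu\|\le c_n$), and an $L^2$ radius $\sigma = M_n c_n$ from the first display condition.

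Next I would compute the uniform entropy. The Lipschitz-in-$\bu$ property (with $L^2$ constant $M_n$) transfers a Euclidean $\eps$-net of $\{\bu\in\R^{d_n}\colon \|\bu\|\le c_n\}$ into an $M_n\eps$-net of $\{f_{\bu,k}\colon \|\bu\|\le c_n\}$ in $L^2(P)$ for each fixed $k$. Since a ball of radius $c_n$ in $\R^{d_n}$ has $\eps$-covering number at most $(3c_n/\eps)^{d_n}$, a union over $k$ yields
\[
\log N\bigl(\delta, \Fcal_n, L^2(P)\bigr)\;\lesssim\; d_n\log\!\bigl(3 c_n M_n/\delta\bigr) + \ln K_n,
\]
for $\delta\le c_n M_n$. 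Then I would apply a Bernstein-type maximal inequality for bounded classes (e.g.\ the generic-chaining/Talagrand inequality in \citet[Sec.~2.14]{vdV}, which is exactly the tool already cited in the proof of \cref{theorem2}). For a class with envelope $F$, $L^2$-radius $\sigma$, and log-covering numbers of the form above, this gives
\[
\sup_{f\in\Fcal_n}|(\P_n-P)f| \;=\; O_p\!\left(\frac{1}{\sqrt n}\int_0^{\sigma}\sqrt{\log N(\delta,\Fcal_n,L^2(P))}\,d\delta \;+\; \frac{F(d_n+\ln K_n)}{n}\right).
\]
Plugging in $\sigma = M_n c_n$, $F=B_n c_n$ and bounding the entropy integral by $M_n c_n\sqrt{d_n+\ln K_n}$ produces the claimed rate.

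The main technical obstacle will be invoking the correct form of the maximal inequality with sharp Bernstein-type tails: the naive single-scale discretization plus union-bound-Bernstein gives a residual of order $B_n c_n/\sqrt{n}\cdot$(entropy), which is worse than the stated bound, so a proper chaining argument that exploits the $L^2$ Lipschitz constant $M_n$ at every scale is needed. A convenient way to sidestep Talagrand's theorem directly is to do a two-stage peeling: control fluctuations on an $\eps_0$-net of $\{\|\bu\|\le c_n\}$ by Bernstein with union bound (providing the $\sqrt{M_n^2 c_n^2(d_n+\ln K_n)/n}+B_n c_n(d_n+\ln K_n)/n$ terms), and then absorb the $\eps_0$-remainder using the Lipschitz property with $\eps_0$ chosen small enough (e.g.\ $\eps_0 = c_n/\sqrt{n}$) so that the residual $B_n\eps_0$ is of smaller order than the claimed bound. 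I expect this explicit two-scale chaining to be the cleanest route and the verification of its telescoping sum to be the trickiest bookkeeping step.
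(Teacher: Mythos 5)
Your primary route (truncation on the envelope event, transferring an $\eps$-net of $\{\|\bu\|\le c_n\}$ into covering numbers for $\Fcal_n$, then the mixed-entropy maximal inequality from \citet[Section~2.14]{vdV}, plus Markov) is essentially identical to the paper's proof. Two small omissions worth noting: the paper computes the covering number both in $L_2(P)$ and in $\|\cdot\|_\infty$, because Theorem~2.14.21 of \citet{vdV} has \emph{two} entropy integrals (a $\sqrt{\cdot}$-integral at the $L_2$ scale and a linear integral at the sup-norm scale); your write-up only records the $L_2(P)$ covering bound and then asserts the second term is $F(d_n+\ln K_n)/n$, which is the outcome of the $\|\cdot\|_\infty$ integral rather than a raw consequence of the theorem. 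This is just presentation, and the substance matches.

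However, the ``two-stage peeling'' alternative you suggest at the end would \emph{not} deliver the stated rate. A net of $\{\|\bu\|\le c_n\}$ at scale $\eps_0 = c_n/\sqrt n$ has cardinality $(3\sqrt n)^{d_n}$, so together with the union over $k$ the log-cardinality is of order $d_n\ln n + \ln K_n$, not $d_n + \ln K_n$. Bernstein plus a union bound over this net therefore produces $\sqrt{M_n^2 c_n^2(d_n\ln n + \ln K_n)/n} + B_n c_n(d_n\ln n + \ln K_n)/n$, which is worse than the claim by a $\ln n$ factor in the $d_n$ term. There is no choice of $\eps_0$ that both makes the residual $B_n\eps_0$ negligible and keeps the net's log-cardinality at $O(d_n)$; removing the spurious $\ln n$ is exactly what the multi-scale chaining in \citet[Theorem~2.14.21]{vdV} achieves, and is why the paper invokes it rather than a single-scale discretization. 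So stick with the first route; it is both correct and the one used in the paper.
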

\begin{proof}
  We proceed in three steps.

  \myparagraph{Step 1: Truncation}
  We start with a truncation argument. Let 
  \begin{align*}
    \quad F_n(\bx) =  \max_{1 \le k \le K_n}  \sup_{\|\bu\|, \|\bu'\| \le c_n}  \frac{|f_{\bu, k}(\bx) - f_{\bu', k}(\bx)|}{\|\bu - \bu'\|}.
  \end{align*}
  Since $f_{k, \0} \equiv 0$ by assumption, $F_n$ is an envelope for the functions in $c_n^{-1} \Fcal_n$:
  \begin{align*}
    \sup_{f \in \Fcal_n} c_n^{-1} |f| \le \max_{1 \le k \le K_n}\sup_{\|\bu\| \le c_n} c_n^{-1} \|\bu\| \frac{|f_{\bu, k}(\bx) - 0|}{\|\bu - \0\|} \le  F_n(\bx).
  \end{align*}
  Now \cref{lem:truncation} and our assumptions give 
  \begin{align*}
    c_n^{-1} \sup_{f \in \Fcal_n} | (\P_n - P)f| &\le  c_n^{-1}\sup_{f \in \Fcal_n} | (\P_n - P)f \ind_{F_n \le D_n}| + o_p(M_n/\sqrt{n}),
  \end{align*}
  so
  \begin{align*}
    \sup_{f \in \Fcal_n} | (\P_n - P)f| &\le  \sup_{f \in \Fcal_n} | (\P_n - P)f \ind_{F_n \le D_n}| + o_p(\sqrt{ M_n^2 c_n^2/n}).
  \end{align*}

  \myparagraph{Step 2: Bounding covering numbers}
  Let $\Acal$ be some set equipped with a norm $\| \cdot \|$. A collection of $N$ balls $B(a_k, \epsilon) = \{ a \in \Acal \colon \| a - a_k \| \le \epsilon\}$ is called an $\epsilon$-covering of $\Acal$ with respect to the norm $\| \cdot \|$ if $\Acal \subset \bigcup_{k = 1}^N B(a_k, \epsilon)$.
  The minimal number of balls of radius $\epsilon$ needed to cover $\Acal$ is the covering number $N(\epsilon, \Acal, \| \cdot \|)$.

  Fix $k$ and consider $\Fcal_n^{(k)} = \{f_{\bu, k} \ind_{S_n}\colon \bu \in \R^{p_n}, \| \bu \| \le c_n \}$.
  Recall that by our definition of $Pf$, the $L_2(P)$-norm is defined as $\| f - g \|^2_{L_2(P)} = \frac 1 n \sumin \E[|f(X_i) - g(X_i)|^2]$.
We will show that
  \begin{align} \label{eq:covering-numbers}
    \ln N(\eps, \Fcal_n^{(k)}, L_2(P))  \lesssim d_n \ln ( 3M_nc_n/ \eps), \quad   \ln N(\eps, \Fcal_n^{(k)}, \| \cdot \|_{\infty}) & \lesssim  d_n \ln (3 D_nc_n/ \eps),
  \end{align}
  where $\lesssim$ means ``bounded up to a universal constant''.
  Let $\bu_1, \dots, \bu_N$ be the centers of an $\eta$-covering of $\{\bu \in \R^{d_n}\colon \|\bu\| \le  c_n\}$, which we can find with $N = (3c_n/\eta)^{d_n}$.
  Then, by the definitions of $M_n$ and $D_n$,
  the functions $f_{\bu_1, k}, \dots, f_{\bu_N, k}$ are centers of a $M_n\eta$-covering of $\Fcal_n^{(k)}$ in $L_2(P)$, and a $D_n\eta$ covering for  $\| \cdot \|_{\infty}$, respectively.
  Choosing $\eta = \eps/M_n$ and $\eta = \eps/D_n$, respectively, gives \eqref{eq:covering-numbers}. Now we can take a union over the coverings of all $\Fcal_n^{(k)}$ to find a covering of $\Fcal_n$, which gives 
  \begin{align} \label{eq:covering-numbers-2}
    \begin{split}
      \ln N(\eps, \Fcal_n, L_2(P))  &\lesssim \ln K_n + d_n \ln ( 3 M_nc_n/ \eps), \\
       \ln N(\eps, \Fcal_n, \| \cdot \|_{\infty}) & \lesssim \ln K_n + d_n \ln (3  D_nc_n/ \eps).
    \end{split}
  \end{align}

  \myparagraph{Step 3: Bounding the truncated process}
  Denote $S_n = \{\bx: F_n(\bx) \le D_n \}$.
  Theorem 2.14.21 of \citet{vdV} gives
  \begin{align*}
     \E\lf[\sup_{f \in \Fcal_n} | (\P_n - P)f\ind_{S_n}|\ri]  
      \lesssim &  \frac{\int_{0}^{M_n c_n} \sqrt{1 + \ln N(\epsilon, \Fcal_n, L_2(P))}d \epsilon}{\sqrt{n}} \\ & + \frac{\int_{0}^{D_n c_n} [1 + \ln N(\epsilon, \Fcal_n, \| \cdot \|_{\infty})] d \epsilon}{n}.
  \end{align*}
  Substituting the covering number bounds \eqref{eq:covering-numbers-2} and the changes of variables $t= \epsilon/M_n c_n$ and $t= \epsilon/D_n c_n$, respectively, gives
  \begin{align*}
    \E\lf[\sup_{f \in \Fcal_n} | (\P_n - P)f\ind_{S_n}|\ri]
     & \lesssim \frac{M_n c_n \sqrt{d_n + \ln K_n} }{\sqrt{n}} + \frac{D_n c_n (d_n + \ln K_n)}{n}.
  \end{align*}
  Now the result follows from Markov's inequality.
\end{proof}

\begin{lemma}
  \label[lemma]{lem:AsN1} 
  Under assumption \ref{A:Asymp}, for all $C  < \infty$, it holds that
  $$
    \sup_{\| \bu \|\le r_n C} \| (\P_n - P) A_n  [\phi(\btheta^* +  \bu) - \phi(\btheta^*)] \| = o_p(1 / \sqrt{n}).
  $$
\end{lemma}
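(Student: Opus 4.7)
The plan is to reduce the vector-valued supremum to a coordinate-wise scalar problem and then invoke \cref{lem:mixed-entropy}. Since $A_n \in \R^{q \times p_n}$ with $q$ fixed, we have for any vector $\bv \in \R^q$ that $\|\bv\| \le \sqrt{q} \max_{1 \le k \le q} |\be_k^\top \bv|$. Applying this to $\bv = (\P_n - P) A_n[\phi(\btheta^* + \bu) - \phi(\btheta^*)]$ gives
\begin{align*}
\sup_{\|\bu\| \le r_n C} \| (\P_n - P) A_n[\phi(\btheta^* + \bu) - \phi(\btheta^*)]\|
\le \sqrt{q}\, \max_{1 \le k \le q} \sup_{f \in \Fcal_n^{(k)}} |(\P_n - P) f|,
\end{align*}
where $\Fcal_n^{(k)} = \{f_{\bu, k}: \|\bu\| \le r_n C\}$ with $f_{\bu, k}(\bx) = \be_k^\top A_n[\phi(\bx; \btheta^* + \bu) - \phi(\bx; \btheta^*)]$. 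Note $f_{\bnull, k} \equiv 0$, as required by \cref{lem:mixed-entropy}.

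Next I would verify the hypotheses of \cref{lem:mixed-entropy} with $d_n = p_n$, $K_n = q$ (fixed), and $c_n = r_n C$. The first two parts of \ref{A:Asymp} directly supply the Lipschitz-in-$L_2$ bound with $M_n^2 = o(1/(r_n^2 p_n))$ and the tail bound with envelope constant $B_n = o(\sqrt{n}/(r_n p_n))$, since passing from the vector norm $\|A_n[\cdots]\|$ to the coordinate $|f_{\bu,k} - f_{\bu',k}|$ only decreases the quantity. \cref{lem:mixed-entropy} then delivers
\begin{align*}
\max_{1 \le k \le q}\sup_{f \in \Fcal_n^{(k)}} |(\P_n - P) f|
= O_p\!\left(\sqrt{\tfrac{M_n^2 c_n^2 (p_n + \ln q)}{n}} + \tfrac{B_n c_n (p_n + \ln q)}{n}\right).
\end{align*}

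Substituting the rates, the first term is $O_p(\sqrt{o(1/(r_n^2 p_n))\cdot r_n^2 C^2 \cdot p_n / n}) = o_p(1/\sqrt{n})$, and the second is $o(\sqrt{n}/(r_n p_n))\cdot r_n C \cdot p_n / n = o(1/\sqrt{n})$. Since $q$ is fixed, the prefactor $\sqrt{q}$ is harmless, and the claim follows. I expect no real obstacle here: the argument is essentially bookkeeping on the rates, the only subtlety being that one must remember to reduce the vector norm to scalar suprema before invoking the generic chaining bound in \cref{lem:mixed-entropy}, so that the extra factor $p_n$ appearing in the entropy integral is absorbed by the $r_n^{-2} p_n^{-1}$ factor provided by the first condition in \ref{A:Asymp}.
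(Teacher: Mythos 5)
Your proof is correct and matches the paper's argument almost exactly: the paper also reduces to scalar suprema over rows of $A_n$ (exploiting that $q$ is fixed) and applies \cref{lem:mixed-entropy} with $d_n = p_n$, $c_n = r_n C$, using the same rate substitution from \ref{A:Asymp}. The only cosmetic difference is that you make a single application with $K_n = q$ (with the harmless $\sqrt{q}$ prefactor), whereas the paper applies the lemma row by row with $K_n = 1$ and takes the finite maximum afterward.
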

\begin{proof}
  We show that for each row $\ba_n$ from $A_n \in \R^{q \times p_n} $, it holds that
  $$
    \sup_{\| \bu \|\le r_n C} |(\P_n - P) \ba_n^\top [\phi(\btheta^* +  \bu) - \phi(\btheta^*)]| = o_p(1 / \sqrt{n}).
  $$
  Since $A_n [\phi(\btheta^* +  \bu) - \phi(\btheta^*)]$ is a finite dimensional vector, this implies the claim.
  Let $\ba_n$ be some row of $A_n$.
  We have
  $$
    \sup_{\| \bu \|\le r_n C} | (\P_n - P) \ba_n^\top[\phi(\btheta^* +  \bu) - \phi(\btheta^*)] |
    = \sup_{f_{\bu} \in \Fcal_n}  | (\P_n - P) f_{\bu}|
  $$
  with $\Fcal_n = \{\ba_n^\top [\phi(\btheta^* +  \bu) - \phi(\btheta^*) ]] : \| \bu \| \le r_n C \}$ or $\Fcal_n = \{\ba_n^\top [\phi(\btheta^* +  \bu) - \phi(\btheta^*) - \E[\phi(\btheta^* +  \bu) - \phi(\btheta^*)]] : \| \bu \| \le r_n C \}$.
  Both function classes lead to the same $\sup_{f_{\bu} \in \Fcal_n}  | (\P_n - P) f_{\bu}|$, and it depends on the setting which version yields more easily verifiable assumptions in \ref{A:Asymp}.
    Now apply \cref{lem:mixed-entropy} with $d_n = p_n$, $c_n = r_n C$, $K_n = 1$. This gives 
  \begin{align*}
    \sup_{f_{\bu} \in \Fcal_n}  | (\P_n - P) f_{\bu}| = o_p\left(\frac{1}{\sqrt{n} }M_n \,r_n \sqrt{p_n} +  \frac{1}{\sqrt{n} } \frac{D_n \, r_n \,p_n}{ \sqrt{n} }\right) = o_p(1 / \sqrt{n})
  \end{align*}
  since $M_n = o(1/(r_n \sqrt{ p_n}))$ and $D_n = o(\sqrt{n}/(r_n \, p_n))$ by \ref{A:Asymp}.
\end{proof}
 \begin{lemma}
\label[lemma]{lem:sparsity}
Let $\hbtheta = (\hbtheta_{(1)}, \bnull)$ be a solution to $\Phi_n((\hbtheta_{(1)}, \0))_{(1)}  \in \partial p_{\blambda_n}((\hbtheta_{(1)}, \0))_{(1)}$.
Under assumptions \ref{A:lambda2} and \ref{A:Penalty_emp_pr2}, it holds that
	\begin{align*}
& \max_{g \in I_{(2)}} \lambda_{n,g}^{-1} \ \| \Phi_n(\hbtheta)_{G_g}  \|_2  \\
 & \, \le \quad \,   \max_{g \in I_{(2)}} \lambda_{n,g}^{-1} \ \|    \P_n \phi(\btheta^*)_{G_g} \|_2 +   \max_{g \in I_{(2)}} \lambda_{n,g}^{-1} \ \| \bar J(\btheta^*, \hbtheta)_{G_g,(1)} \;\bar J(\btheta^*, \hbtheta)^{-1}_{(1)}\;  \nabla_{\btheta_{(1)}} p_{\blambda_n}(\hbtheta) \|_2 \\
 &\quad \,  +   \max_{g \in I_{(2)}} \lambda_{n,g}^{-1} \ \|  \bar J(\btheta^*, \hbtheta)_{G_g,(1)} \;\bar J(\btheta^*, \hbtheta)^{-1}_{(1)} \; \P_n \phi(\btheta^*)_{(1)} \|_2   + o_p(1),
\end{align*}
	where $\bar J(\btheta^*, \btheta)_{G_g,(1)}  \coloneqq \int_0^1 J(\btheta^* + t(\btheta - \btheta^*))_{G_g,(1)} dt $ and $\bar J(\btheta^*, \btheta)_{(1)} \coloneqq  \int_0^1 J(\btheta^* + t(\btheta - \btheta^*))_{(1)} dt $ with $J(\btheta)_{G_g,(1)}$ and $J(\btheta)_{(1)}$ as defined in \cref{subsec:assump_penalty}.
\end{lemma}
\begin{proof}
  We have $\P_n \phi(\hbtheta)_{G_g} =   \P_n \phi(\btheta^*)_{G_g} +  \P_n \left[ \phi(\hbtheta)_{G_g)} - \phi(\btheta^*)_{G_g}  \right]$ and
  \begin{align*}
   \P_n \left[ \phi(\hbtheta)_{G_g} - \phi(\btheta^*)_{G_g}  \right] 
   &=  P \left[ \phi(\hbtheta)_{G_g} - \phi(\btheta^*)_{G_g}  \right] 
    + (\P_n - P) \left[ \phi(\hbtheta)_{G_g} - \phi(\btheta^*)_{G_g}  \right] \\
   &=  \bar J(\btheta^*, \hbtheta)_{G_g,(1)} (\hbtheta_{(1)} - \btheta^*_{(1)}) 
    +  (\P_n - P) \left[ \phi(\hbtheta)_{G_g} - \phi(\btheta^*)_{G_g}  \right].
  \end{align*}
  Similar to the proof of \cref{theorem2}, one obtains
  \begin{align*}
   \hbtheta_{(1)} - \btheta^*_{(1)} = \bar J(\btheta^*, \hbtheta)^{-1}_{(1)} \left[ -  \P_n \phi(\btheta^*)_{(1)} - (\P_n - P) [   \phi(\hbtheta)_{(1)} - \phi(\btheta^*)_{(1)} ]  + \nabla_{\btheta_{(1)}} p_{\blambda_n}(\hbtheta) \right].
  \end{align*}
  Combining the two displays, we obtain
\begin{align*}
   \Phi_n(\hbtheta)_{G_g}  & =  \P_n \phi(\btheta^*)_{G_g}   +    (\P_n - P) \left[ \phi(\hbtheta)_{G_g} - \phi(\btheta^*)_{G_g}  \right]  \\
  &  \quad +   \bar J(\btheta^*, \hbtheta)_{G_g,(1)} \; \bar J(\btheta^*, \hbtheta)^{-1}_{(1)} \left[ -  \P_n \phi(\btheta^*)_{(1)} + \nabla_{\btheta_{(1)}} p_{\blambda_n}(\hbtheta) \right] \\
  &  \quad +   \bar J(\btheta^*, \hbtheta)_{G_g,(1)} \;\bar J(\btheta^*, \hbtheta)^{-1}_{(1)} \left[- (\P_n - P) [   \phi(\hbtheta)_{(1)} - \phi(\btheta^*)_{(1)} ]  \right].
\end{align*}
The claim follows if we show that 
\begin{align*}
& \max_{g \in I_{(2)}} \lambda_{n,g}^{-1} \ \|  (\P_n - P) \left[ \phi(\hbtheta)_{G_g} - \phi(\btheta^*)_{G_g}  \right]  \|_2 = o_p(1) \quad \text{and} \\
& \max_{g \in I_{(2)}} \lambda_{n,g}^{-1} \ \|  \bar J(\btheta^*, \hbtheta)_{G_g,(1)} \;\bar J(\btheta^*, \hbtheta)^{-1}_{(1)}  \left[- (\P_n - P) [   \phi(\hbtheta)_{(1)} - \phi(\btheta^*)_{(1)} ]  \right] \|_2 = o_p(1) .
\end{align*}
For the first term, it holds that
\[
\max_{g \in I_{(2)}} \lambda_{n,g}^{-1} \ \|  (\P_n - P) \left[ \phi(\hbtheta)_{G_g} - \phi(\btheta^*)_{G_g}  \right]  \|_2 
\le \max_{g \in I_{(2)}} \frac{\sqrt{ |G_g| }}{\lambda_{n,g}}   \ \| (\P_n - P) \left[ \phi(\hbtheta) - \phi(\btheta^*)  \right] \|_\infty
= o_p(1)
\]
by \cref{lem:emp_process_penalty} and the condition on $\lambda_{n,g}$ in \ref{A:lambda2}.
For the second term, we obtain using $\| A \bx \|_2 = (\| A \bx \|_2 / \| \bx \|_{\infty}) \| \bx \|_{\infty} \le  \| \bx \|_{\infty} \sup_{\| \bx \|_\infty \le 1 } \| A \bx \|_2$,
\begin{align*}
  & \max_{g \in I_{(2)}} \lambda_{n,g}^{-1} \ \|  \bar J(\btheta^*, \hbtheta)_{G_g,(1)} \;\bar J(\btheta^*, \hbtheta)^{-1}_{(1)} \left[- (\P_n - P) [   \phi(\hbtheta)_{(1)} - \phi(\btheta^*)_{(1)} ]  \right] \|_2 \\
  & \le \max_{g \in I_{(2)}} \lambda_{n,g}^{-1} \ \sup_{\| \bx \|_\infty \le 1}\|  \bar J(\btheta^*, \hbtheta)_{G_g,(1)} \;\bar J(\btheta^*, \hbtheta)^{-1}_{(1)}  \bx \|_2 \ \| (\P_n - P) [   \phi(\hbtheta)_{(1)} - \phi(\btheta^*)_{(1)} ]  \|_\infty = o_p(1)
\end{align*}
by \cref{lem:emp_process_penalty} and the condition on $\lambda_{n,g}$ in \ref{A:lambda2}.
\end{proof}

\begin{lemma}
\label[lemma]{lem:emp_process_penalty}
Under \ref{A:Penalty_emp_pr2}, for all $C < \infty$, it holds that
\begin{align*}
 \textstyle \sup_{\bu \in \R^{p_n}, \ \bu_{(2)} = \0 ,\ \| \bu \| \le \tilde r_n C}  \left\| (\P_n - P)[ \phi(\btheta^* +  \bu) -  \phi(\btheta^*)] \right\|_{\infty}
 & = o_p(\eta_n).
\end{align*}
\end{lemma}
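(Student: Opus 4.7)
The plan is to recognize this as a direct application of the mixed-entropy bound in \cref{lem:mixed-entropy} and verify that the resulting rates are $o(\eta_n)$. Since the supremum is restricted to vectors $\bu$ with $\bu_{(2)} = \0$ and $\|\bu\|\le r_n C$, the index set is effectively the $s_n$-dimensional ball $\{\bu_{(1)}\in \R^{s_n}\colon \|\bu_{(1)}\|\le r_n C\}$. Writing the supremum as $\max_{1\le k\le p_n}\sup_{\bu}|(\P_n - P)g_{\bu,k}|$ with $g_{\bu,k}(\bx) = \phi(\bx;\btheta^* + \bu)_k - \phi(\bx;\btheta^*)_k$, we obtain exactly the setting of \cref{lem:mixed-entropy} with parameter dimension $d_n = s_n$, radius $c_n = r_n C$, and $K_n = p_n$ many function classes. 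Note that $g_{\0,k}\equiv 0$, so the required anchoring condition is satisfied.

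Next, I would identify the variance and envelope parameters. By \ref{A:Penalty_emp_pr2}, the first part gives $M_n^2 = o\bigl(n\eta_n^2/(r_n^2(s_n + \ln p_n))\bigr)$, and the second part supplies a Lipschitz envelope with constant $\tB_n = o\bigl(n\eta_n/(r_n s_n + r_n\ln p_n)\bigr)$. Plugging these into the conclusion of \cref{lem:mixed-entropy} yields the two terms
\begin{align*}
\sqrt{\frac{M_n^2 r_n^2 C^2 (s_n + \ln p_n)}{n}} \quad \text{and} \quad \frac{\tB_n \, r_n C\,(s_n + \ln p_n)}{n}.
\end{align*}
Substituting the bounds on $M_n^2$ and $\tB_n$ shows that both are $o(\eta_n)$: in the first, the factors of $r_n^2(s_n + \ln p_n)/n$ cancel against $M_n^2 = o\bigl(n\eta_n^2/(r_n^2(s_n+\ln p_n))\bigr)$, giving $\sqrt{o(\eta_n^2)} = o(\eta_n)$; in the second, $r_n(s_n + \ln p_n)/n$ cancels against $\tB_n = o\bigl(n\eta_n/(r_n(s_n + \ln p_n))\bigr)$, also giving $o(\eta_n)$.

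There is no real obstacle here, as the hard work has already been done in \cref{lem:mixed-entropy}; the only step that requires some care is checking that \ref{A:Penalty_emp_pr2} indeed provides the $L_2(P)$-Lipschitz constant and the almost-sure Lipschitz envelope that the mixed-entropy lemma requires uniformly over the $p_n$ coordinates. This is immediate since the first part of \ref{A:Penalty_emp_pr2} is stated as a maximum over $k = 1,\dots,p_n$, and the second part uses the $\ell_\infty$-norm of $\phi_i(\btheta) - \phi_i(\btheta')$, which dominates each coordinate simultaneously.
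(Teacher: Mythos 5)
Your proposal is correct and follows essentially the same route as the paper: both identify the restricted supremum as a uniform deviation over a function class indexed by $\bu_{(1)}\in\R^{s_n}$ and $k=1,\dots,p_n$, apply \cref{lem:mixed-entropy} with $d_n=s_n$, $K_n=p_n$, $c_n=r_nC$, and read off $M_n$ and $B_n$ from the two parts of \ref{A:Penalty_emp_pr2} to verify the rate is $o(\eta_n)$. The paper presents the final step as an expectation bound followed by Markov's inequality, while you cite the $O_p$ conclusion of \cref{lem:mixed-entropy} directly; these are equivalent.
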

\begin{proof}
We have
\[
 \sup_{\substack{\bu \in \R^{p_n}, \bu_{(2)} = \0 \\ \| \bu \| \le \tilde r_n C}}  \| (\P_n - P)[ \phi(\btheta^* +  \bu) -  \phi(\btheta^*)] \|_{\infty}= \sup_{f_{\bu, k} \in \Fcal_n}  | (\P_n - P) f_{\bu, k}|
\]
with 
\[ 
\Fcal_n = \{ \tilde \phi(\btheta^* +  (\bu_{(1)}, \bnull))_k - \tilde \phi(\btheta^*)_k : \bu_{(1)} \in \R^{s_n}, \bu_{(2)} = \bnull, \| \bu \| \le \tilde r_n C, k =  1, \ldots p_n \},
\]
where $\tilde \phi(\btheta) = \phi(\btheta)$ or $\tilde \phi(\btheta) = \phi(\btheta) - \E[\phi(\btheta)]$.
Both version lead to the same $ \sup_{f_{\bu, k} \in \Fcal_n}  | (\P_n - P) f_{\bu, k}|$, and it depends on the setting which one leads to more easily verifiable assumptions.
Define 
\begin{align*}
  \tM_n^2 =  \max_{1 \le k \le p_n}  \sup_{\btheta, \btheta' \in \Theta_n'}  \frac{P| \tilde \phi_{i}(\btheta)_k - \tilde \phi_{i}(\btheta')_k |^2}{\| \btheta - \btheta' \|^2}
\end{align*}
By assumption  \ref{A:Penalty_emp_pr2}, we can apply \cref{lem:mixed-entropy} with $d_n  = s_n, K_n = p_n$, $c_n =\tilde  r_n C$, $M_n = \tilde M_n$ and $D_n = \tilde D_n$. 
This gives
\begin{align*}
 \E\left( \sup_{f_{\bu, k} \in \Fcal_n}  | (\P_n - P) f_{\bu, k}| \right) 
 & = O\left( \sqrt{\frac{\tM_n^2 \tilde r_n^2 C^2(s_n + \ln p_n )}{ n}} + \frac{\tD_n \tilde r_n C (s_n +\ln p_n  )}{n} \right) = o(\eta_n), 
\end{align*}
where we used the growth bounds from \ref{A:Penalty_emp_pr2} in the last step.
Now the claim follows from Markov's inequality.
\end{proof}

\begin{lemma} \label[lemma]{lem:eta_n}
  Under assumption \ref{A:phi-moments} and $p_n \to \infty$, it holds that
  \begin{align*}
    \Pr\left(\left\| \frac{1}{n} \sumin \phi_{i}(\btheta^*) \right\|_{\infty} \le 2\sigma_n \sqrt{\frac{\ln p_n}{n}} \right) \to 1.
  \end{align*}
 \end{lemma}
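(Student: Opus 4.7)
The plan is a standard truncation plus coordinate-wise Bernstein bound with a union over the $p_n$ coordinates. Let $B_n = \sigma_n\sqrt{n/(4\ln p_n)}$ be the truncation level coming directly from \ref{A:phi-moments}, and write $Y_{i,k} = \phi_i(\btheta^*)_k$ and $\tilde Y_{i,k} = Y_{i,k}\,\ind\{\|\phi_i(\btheta^*)\|_\infty \le B_n\}$. The union bound together with \ref{A:phi-moments} immediately gives
\[
\Pr\lf(\exists\, i\colon \|\phi_i(\btheta^*)\|_\infty > B_n \ri) \le \sum_{i=1}^n \Pr\lf(\|\phi_i(\btheta^*)\|_\infty > B_n\ri) = o(1),
\]
so on an event of probability tending to $1$ the sum $n^{-1}\sum_i Y_{i,k}$ coincides with $n^{-1}\sum_i \tilde Y_{i,k}$ for every $k$, and it suffices to control the truncated sum.

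The truncated sum is split into a random part and a deterministic part. Because $\sum_i \E[Y_{i,k}] = 0$, the deterministic part equals $n^{-1}\sum_i \E[\tilde Y_{i,k}] = -n^{-1}\sum_i \E[Y_{i,k}\,\ind\{\|\phi_i(\btheta^*)\|_\infty > B_n\}]$. Cauchy--Schwarz and Jensen together with \ref{A:phi-moments} bound this uniformly in $k$ by
\[
\sqrt{\frac{1}{n}\sum_i \E[Y_{i,k}^2]} \cdot \sqrt{\frac{1}{n}\sum_i \Pr(\|\phi_i(\btheta^*)\|_\infty > B_n)} \le \sigma_n \sqrt{o(1)/n} = o\lf(\sigma_n\sqrt{\ln p_n/n}\ri),
\]
which is negligible relative to the target threshold.

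For the random part, the summands $\tilde Y_{i,k} - \E[\tilde Y_{i,k}]$ are independent, bounded by $2B_n$, and have average variance at most $\sigma_n^2$. Applying Bernstein's inequality at level $t = (2-\eps_n)\sigma_n\sqrt{\ln p_n/n}$ with $\eps_n \downarrow 0$ slowly, the variance contribution in the exponent is $\tfrac{1}{2}(2-\eps_n)^2\sigma_n^2\ln p_n$, while the bounded contribution $\tfrac{2}{3}B_n t$ reduces to $\tfrac{1}{3}(2-\eps_n)\sigma_n^2$. The resulting exponent is $-\tfrac{3(2-\eps_n)^2}{2(5-\eps_n)}\ln p_n$, which for $\eps_n$ small enough exceeds $-(1+\delta)\ln p_n$ for some $\delta>0$. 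A union bound over the $p_n$ coordinates then yields a failure probability of order $p_n^{-\delta}\to 0$. Adding the $o(\sigma_n\sqrt{\ln p_n/n})$ deterministic part shows that the $\|\cdot\|_\infty$ norm is at most $(2-\eps_n)\sigma_n\sqrt{\ln p_n/n} + o(\sigma_n\sqrt{\ln p_n/n}) \le 2\sigma_n\sqrt{\ln p_n/n}$ for $n$ large, as claimed.

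The only non-routine point is the choice of constants in Bernstein's inequality to match the factor $2$ exactly; this is handled by letting the slack $\eps_n$ tend to $0$ slowly enough to keep the exponent bounded above by $-(1+\delta)\ln p_n$, so that the union bound still vanishes. Everything else follows from \ref{A:phi-moments}, the definition of $\sigma_n$, and the assumption $p_n \to \infty$.
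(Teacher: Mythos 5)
Your proof is correct and follows essentially the same route as the paper: truncate at $B_n = \sigma_n\sqrt{n/(4\ln p_n)}$, use \ref{A:phi-moments} plus Cauchy--Schwarz to show the truncation bias is $o(\sigma_n\sqrt{\ln p_n/n})$, and finish with coordinate-wise Bernstein and a union bound over the $p_n$ coordinates. The only cosmetic difference is that you keep the envelope bound $2B_n$ and introduce a vanishing slack $\eps_n$ to recover the factor $2$ exactly, whereas the paper plugs $\eta_n$ directly into Bernstein using the envelope $B_n$ (and, incidentally, omits the $\sigma_n$ factor in its definition of $B_n$, which appears to be a typo given \ref{A:phi-moments}).
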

 \begin{proof}
  Let 
  \begin{align*}
    \eta_n = 2\sigma_n \sqrt{\frac{\ln p_n}{n}}, \quad  B_n = \sigma_n \sqrt{\frac{n}{4\ln p_n}} .
  \end{align*}
  Using \cref{lem:truncation}, \ref{A:phi-moments}, and $\sqrt{\max_k P \phi_{}(\btheta^*)_k^2 }\le \sqrt{n} \eta_n$, we get
   \begin{align*}
    \left\| \P_n \phi_{i}(\btheta^*) \right\|_{\infty} \le \left\| (\P_n - P)\phi_{i}(\btheta^*) \ind_{\|\phi_{i}(\btheta^*)\|_\infty \le B_n}\right\|_{\infty} + o_p(\eta_n),  
  \end{align*}
  Further, the union bound and Bernstein's inequality give
  \begin{align*}
    & \quad  \Pr\left(\left\| (\P_n - P) \phi_{i}(\btheta^*) \ind_{\|\phi_{i}(\btheta^*)\|_\infty \le B_n}  \right\|_{\infty} > \eta_n\right)
      \le  2\, p_n \max_{1 \le k \le p_n} \exp\left(-\frac{\frac 1 2  \eta_n^2 }{\frac 1 n \sigma_n^2 + \frac 1 3 \eta_n B_n/n}\right) \\
    & \le  2 \exp\left(\ln p_n -\frac{\eta_n^2 n}{2 \sigma_n^2 + \eta_n B_n}\right)  \le  2 \exp\left(\ln p_n -\frac{\eta_n^2 n}{3 \sigma_n^2 }\right) \\
    & =  2 \exp\left(\ln p_n - \frac 4 3 \ln p_n\right) = o(1). \tag*{\qedhere}
  \end{align*}
 \end{proof}

\end{document}